\documentclass[hidelinks,onefignum,onetabnum]{siamart251216}

\usepackage{amsfonts}
\usepackage{graphicx}
\usepackage{epstopdf}
\usepackage{algorithmic}
\usepackage{placeins}
\ifpdf
  \DeclareGraphicsExtensions{.eps,.pdf,.png,.jpg}
\else
  \DeclareGraphicsExtensions{.eps}
\fi

\newsiamremark{remark}{Remark}
\newsiamremark{hypothesis}{Hypothesis}
\crefname{hypothesis}{Hypothesis}{Hypotheses}
\newsiamthm{claim}{Claim}
\newsiamremark{fact}{Fact}
\crefname{fact}{Fact}{Facts}

\headers{Mean Field Games without Perfect Recall}%
{Xuanping Zhang, Xiao Zhang, and Wang Yao}

\title{Partially Observed Mean Field Games without Perfect Recall: Optimality Conditions and Equilibria\thanks{Submitted to the editors September 1, 2026. This work has been submitted to the SIAM for possible publication. Copyright may be transferred without notice, after which this version may no longer be accessible. \funding{This work was supported by the National Natural Science Foundation of China (NSFC Grant No.12441101 and 12601852), the Research Funding of Hangzhou International Innovation Institute of Beihang University (Grant No.2024KQ161), and Beijing Advanced Innovation Center for Future Blockchain and Privacy Computing.}}}

\author{Xuanping Zhang\thanks{School of Mathematical Sciences and LMIB, Beihang University, Beijing 100191, China; Hangzhou International Innovation Institute of Beihang University, Hangzhou 311115, China (\email{BY2409034@buaa.edu.cn}).} \and Xiao Zhang\thanks{School of Mathematical Sciences and LMIB, Beihang University, Beijing 100191, China (\email{xiao.zh@buaa.edu.cn}).} \and Wang Yao\thanks{Corresponding author. School of Artificial Intelligence, LMIB, and Beijing Advanced Innovation Center for Future Blockchain and Privacy Computing, Beihang University, Beijing 100191, China (\email{yaowang@buaa.edu.cn}).}}

\usepackage{amsopn}

\ifpdf
\hypersetup{
  pdftitle={Partially Observed Mean Field Games without Perfect Recall: Optimality Conditions and Equilibria},
  pdfauthor={Xuanping Zhang, Xiao Zhang, and Wang Yao}
}
\fi

\begin{document}

\maketitle

\begin{abstract}
This paper studies partially observed mean field games without perfect recall (WPR). The representative agent observes a noisy signal, but the control at time \(t\) uses only \(\mathcal G_t^I=\sigma(y_t)\), a generally non-nested information family.  The conditional population law instead uses the observation filtration \(\mathbb F^Y\). These coupled levels rely on different information scales and are difficult to close within one construction.  We parameterize the environment by a deterministic compatible joint law of state, driving variables, and random mean field term, thereby preserving its dependence structure without enlarging the agent's control information. For a fixed law, a reference measure and Girsanov's theorem yield a WPR stochastic maximum principle; the selected response is represented by the conditional Hamiltonian and WPR belief measure.  The joint path posterior of hidden state and mean field term gives a weak Kushner-Stratonovich representation of the conditional population law. A recursive response map is continuous on a compact convex set of compatible laws, so Schauder-Tychonoff yields a weak WPR equilibrium.  For a fixed equilibrium law and feedback, a compatible Yamada-Watanabe theorem lifts pathwise uniqueness to a strong realization.  Finally, a linear-quadratic interbank lending example compares perfect recall (PR) with WPR.  The PR response follows the Kalman-Bucy feedback, whereas the WPR response solves a Fredholm-Volterra equation and is affine in the current observation under Gaussianity.  The numerical experiment illustrates how equilibrium behavior differs between PR and WPR.
\end{abstract}

\begin{keywords}
mean field games, partially observed stochastic control, information structures without perfect recall, stochastic maximum principle, nonlinear filtering, equilibrium existence
\end{keywords}

\begin{MSCcodes}
91A16, 49N80, 93E20, 93E11
\end{MSCcodes}

\section{Introduction}

Mean field game (MFG) theory studies stochastic differential games with many weakly interacting agents.  Initiated by Lasry and Lions \cite{lasry2007mean} and Huang, Malham\'{e}, and Caines \cite{huang2006large}, it has both analytic and probabilistic formulations.  A standard construction fixes a candidate measure flow, solves the representative agent's control problem, and matches that flow with the law generated by the optimal response.  General accounts are given by Bensoussan, Frehse, and Yam \cite{bensoussan2013mean} and Carmona and Delarue \cite{carmona2018prob1,carmona2018prob2}.  When the agent observes only a noisy signal, the information structure also becomes part of the model.

Partially observed MFGs have been studied under several information and noise structures.  Caines and Kizilkale \cite{caines2016epsilon} analyze partially observed major-minor LQG equilibria; Huang, Wang, and Wu \cite{huang2016backward} compare backward LQG games under full and partial information; and Bensoussan, Feng, and Huang \cite{bensoussan2021lqgpartial} treat common noise through conditional estimates. Sen and Caines \cite{sen2016mean} formulate a nonlinear major-minor model with consistency in a Wasserstein space of random probability measures; Sen and Caines \cite{sen2019partial} derive partially observed best responses through nonlinear filtering. Carmona, Delarue, and Lacker \cite{carmona2016common} construct MFGs with common noise through the joint law of the initial state, noises, random measure, relaxed control, and state, using compatibility to preserve nonanticipativity.  At the individual control level, Bensoussan \cite{bensoussan1992stochastic} and Bandini, Cosso, Fuhrman, and Pham \cite{bandini2019randomized} formulate filtering problems through conditional laws.  These law-based formulations retain the hidden state or random mean field term together with its dependence on randomness.

These studies retain perfect recall (PR): admissible controls and the conditional population law share the increasing observation filtration \(\mathcal F_t^Y\).  We consider instead the case without perfect recall (WPR), where the control uses only
\[
    \mathcal G_t^I=\sigma(y_t).
\]
This family is generally non-nested.  The separation principle and dynamic programming therefore do not apply directly; optimality conditions must be projected onto the information available at each time.  Inspired by Witsenhausen's counterexample \cite{witsenhausen1968counterexample}, subsequent work has developed systematic approaches to stochastic control under nonclassical information structures.  In continuous time, Charalambous and Ahmed \cite{charalambous2016centralized} derive decentralized optimality conditions under history-based local filtrations, which retain perfect recall.  More recently, Charalambous \cite{charalambous2025decentralized} treats non-cooperative stochastic differential games whose information structures may lack perfect recall.

Bringing WPR into an MFG adds mean field consistency to this nonclassical control problem.  The individual response uses the current observation, whereas the conditional population law \(m_t^Y=\mathcal L(x_t^*\mid\mathcal F_t^Y)\) depends on the full observation history.  Under PR, one filtration supports both objects.  Under WPR, the full history filter contains information unavailable to the controller, while the current observation lacks the temporal information needed to update the random mean field term.  That mean field term remains coupled to the driving variables, so treating it as deterministic or observable would lose that coupling or enlarge the admissible information.  Changing the response also changes the state law and its observation-dependent population law; time marginals alone do not retain the coupling with signals and noises.  The difficulty is to close a current-information best response and a history-dependent consistency condition within one construction.

We develop a law-based formulation tailored to this structure.  A deterministic compatible joint law \(P\) of \((X,V,\mu)\) parameterizes the random environment and retains its dependence structure without revealing the mean field term to the controller.  For fixed \(P\), an instantaneous WPR belief determines the representative control, while an observation-history path posterior determines the conditional population law.  A response map on compatible joint laws closes the two levels and yields weak equilibria with strong compatible realizations.  An LQ interbank lending model then permits a direct PR--WPR comparison.

The main contributions are as follows.  First, we formulate the representative agent's best-response problem in a random environment indexed by a compatible joint law \(P\).  Under a reference probability measure, we construct its variational and adjoint systems, derive a WPR stochastic maximum principle, and represent the conditional Hamiltonian through the WPR belief.  Convexity and belief regularity yield a selected optimal response with a Lipschitz feedback representation \(u_t^*=\phi^P(t,y_t)\), without enlarging the WPR information available to the agent.

Second, we construct an optional joint path posterior for the hidden state and the random mean field term under the observation filtration \(\mathbb F^Y\).  The full posterior retains the histories required by path-dependent coefficients, and its current state marginal is the conditional population law.  We identify the innovation Brownian motion and the predictable representation property, and derive a weak Kushner-Stratonovich representation for \(m_t^Y\) whose coefficients are evaluated under the path posterior.  Although the equation need not close in \(m_t^Y\) alone, it uses the full history law without enlarging WPR information and keeps the population filter distinct from the instantaneous WPR belief.

Third, we construct a joint law response map \(\mathcal R\) through a recursive conditional-law scheme.  Uniform moment, time-regularity, and stability estimates make it a continuous self-map of a compact convex set.  The Schauder-Tychonoff theorem then yields a weak WPR-MFG equilibrium, whose fixed point is identified with the optimal controlled law.  For the stated response class, pathwise uniqueness holds for a fixed equilibrium law and feedback; the compatible Yamada-Watanabe theorem then yields a strong realization of the equilibrium.  We also analyze an LQ interbank lending model based on Carmona, Fouque, and Sun \cite{carmona2015systemic}. It yields Kalman-Bucy feedback under PR and a Fredholm-Volterra equation under WPR; the Gaussian WPR solution is affine in the current observation.  The numerical experiment complements this analysis by comparing the equilibria that arise under the two information structures.

Section 2 presents the model and reference measure formulation; Sections 3--5 develop the fixed-\(P\) response, conditional mean field law, and equilibrium construction; Section 6 gives the LQ example and numerics.  Proofs are collected in the appendices.

\section{Problem Statement}

Fix a finite horizon $[0,T]$.  Let $(\Omega^0,\mathcal F^0,\mathbb P^0)$ support independent standard Brownian motions $W$ and $B$, with values in $\mathbb R^{d_w}$ and $\mathbb R^{d_b}$, respectively.  Let $\mathbb F^0$ be their natural filtration with the usual $\mathbb P^0$-augmentation.  For a complete separable metric space $(E,d_E)$, let $\mathcal P(E)$ be the Borel probability measures on $E$ with the weak topology, and set
\begin{equation*}
    \mathcal{P}_2(E) := \left\{ \mu \in \mathcal{P}(E) : \int_E d_E(x, x_0)^2 \mu(dx) < \infty \text{ for some } x_0 \in E \right\}.
\end{equation*}
For $\mu,\nu\in\mathcal P_2(E)$, define
\begin{equation}
    \mathcal{W}_2(\mu, \nu) := \left( \inf_{\pi \in \Pi(\mu, \nu)} \int_{E \times E} d_E(x, y)^2 \pi(dx, dy) \right)^{\frac{1}{2}},
\end{equation}
where $\Pi(\mu,\nu)$ is the set of couplings.  For Euclidean $E$, write $C([0,t];E)$ for the continuous paths with norm $\|x\|_t:=\sup_{0\le s\le t}|x_s|$.

\subsection{System Dynamics and WPR Information Structure}

Fix an exogenous flow $\boldsymbol\mu\in C([0,T];\mathcal P_2(\mathbb R^{d_x}))$.  The representative state satisfies
\begin{equation} \label{eq:true_state}
    dx_t = b(t, x, y, u_t, \mu_t) dt + \sigma_1(t, x, y) dW_t, \quad x_0 \in \mathbb{R}^{d_x},
\end{equation}
where $x=x_{\cdot\wedge t}$ and $y=y_{\cdot\wedge t}$ are the state and observation paths.  The control takes values in $\mathbb U\subset\mathbb R^{d_u}$, and $\mu_t$ is the prescribed population law.

The state is observed only through
\begin{equation} \label{eq:obs_dyn}
    dy_t = h(t, x, y, u_t, \mu_t) dt + \sigma_2(t, y) dB_t, \quad y_0 = 0,
\end{equation}
where $B$ is independent of $W$.  Set \(X_t:=\begin{bmatrix}x_t^\top&y_t^\top\end{bmatrix}^\top\) and \(V_t:=\begin{bmatrix}W_t^\top&B_t^\top\end{bmatrix}^\top\).  Then \eqref{eq:true_state}--\eqref{eq:obs_dyn} become

\begin{equation} \label{eq:augmented_dyn}
    dX_t = F(t, X, u_t, \mu_t) dt + \Sigma(t, X) dV_t,
    \qquad
    X_0 = \begin{bmatrix} x_0 \\ 0 \end{bmatrix},
\end{equation}
where
\begin{equation} \label{eq:augmented_coefficients}
    F(t, X, u_t, \mu_t) := \begin{bmatrix} b(t, x, y, u_t, \mu_t) \\ h(t, x, y, u_t, \mu_t) \end{bmatrix}, \quad
    \Sigma(t, X) := \begin{bmatrix} \sigma_1(t, x, y) & 0 \\ 0 & \sigma_2(t, y) \end{bmatrix}.
\end{equation}

At time $t$, the agent uses only the current observation:
\begin{equation} \label{eq:info_structure}
    \mathcal G_t^I=\sigma(y_t).
\end{equation}

\begin{remark} \label{remark_WPRInfo}
The family $\mathbb G^I:=\{\mathcal G_t^I\}_{0\le t\le T}$ is generally not a filtration because $\sigma(y_s)\subseteq\sigma(y_t)$ may fail for $s<t$.  We call it a non-nested instantaneous information family; this is the meaning of WPR used here.
\end{remark}

Let $(\mathbb U,\mathcal B(\mathbb U))$ be the action space and let $\Phi_{ad}$ be the strategy class specified in Assumption (H5).  Define {\small
\begin{equation} \label{eq:admissible_control}
    \mathcal U_{ad}:=\left\{u\,\middle|\,
    u_t=\phi(t,y_t)\in\mathbb U\ dt\otimes d\mathbb P\text{-a.e.},\
    \phi\in\Phi_{ad},\
    \mathbb E^{\mathbb P^u}\!\left[\int_0^T |u_t|^2dt\right]<\infty
    \right\}.
\end{equation}
}
For the fixed flow $\mu$, the representative agent minimizes
\begin{equation} \label{eq:cost_func}
    J^{\mathbb{P}^u}(u) = \mathbb{E}^{\mathbb{P}^u} \left[ \int_0^T l(t, X, u_t, \mu_t) dt + g(X_T) \right],
\end{equation}
over $\mathcal U_{ad}$; the running cost may depend on $X_{\cdot\wedge t}$.

\subsection{Equivalent Stochastic System Formulation via Reference Measure}

We fix the uncontrolled path law under a reference measure and introduce the control through a likelihood process.  Let $(\Omega^0,\mathcal F^0,\mathbb F^0,\mathbb P)$ support a standard Brownian motion $V$, and set

\begin{equation} \label{eq:ref_dyn}
    dX_t = \Sigma(t, X) dV_t, \quad X_0 = \begin{bmatrix} x_0 \\ 0 \end{bmatrix}.
\end{equation}
The law of $X$ under $\mathbb P$ is independent of the admissible control.

For $u\in\mathcal U_{ad}$ define
\begin{equation} \label{eq:radon_nikodym}
    \Lambda^u(t) = \exp \left( \int_0^t \theta^\top(s) dV_s - \frac{1}{2} \int_0^t \|\theta(s)\|^2 ds \right),
\end{equation}
where $\theta(t) := \Sigma^{-1}(t, X) F(t, X, u_t, \mu_t)$.  Then
\begin{equation} \label{eq:lambda_dyn}
    d\Lambda^u(t) = \Lambda^u(t) \theta^\top(t) dV_t, \quad \Lambda^u(0) = 1.
\end{equation}

The controlled probability measure associated with $u$ is defined by
\begin{equation} \label{eq:measure_change}
    \left. \frac{d\mathbb{P}^u}{d\mathbb{P}} \right|_{\mathcal{F}_t^0} = \Lambda^u(t), \quad \forall t \in [0,T].
\end{equation}

\begin{remark} \label{remark_RefMeasureMartingale}
The measure change is used only after the standing assumptions establish that $\Lambda^u$ is a uniformly integrable martingale.
\end{remark}

If $\Lambda^u$ is a true martingale, Girsanov's theorem gives
\begin{equation} \label{eq:BM_Original}
    V_t^u := V_t - \int_0^t \Sigma^{-1}(s, X) F(s, X, u_s, \mu_s) ds
\end{equation}
as a Brownian motion under $\mathbb P^u$, and $X$ solves
\begin{equation}
    dX_t = F(t, X, u_t, \mu_t) dt + \Sigma(t, X) dV_t^u.
\end{equation}

The cost can also be written under $\mathbb P$.  Since $\Lambda^u(t)=\mathbb E^{\mathbb P}[\Lambda^u(T)\mid\mathcal F_t^0]$, Bayes' formula and Fubini's theorem give
\begin{align*}
\mathbb E^{\mathbb P^u}[g(X_T)]
&=\mathbb E^{\mathbb P}[\Lambda^u(T)g(X_T)],\\
\mathbb E^{\mathbb P^u}\!\left[\int_0^T l(t,X,u_t,\mu_t)dt\right]
&=\mathbb E^{\mathbb P}\!\left[
\int_0^T\Lambda^u(t)l(t,X,u_t,\mu_t)dt\right],
\end{align*}
where the standing moment bounds justify the integrability.  Therefore
\begin{equation} \label{eq:cost_func_ref}
    J^{\mathbb{P}}(u)
    =
    \mathbb{E}^{\mathbb{P}}\!\left[
    \int_0^T \Lambda^u(t)l(t,X,u_t,\mu_t)dt
    +\Lambda^u(T)g(X_T)
    \right].
\end{equation}

The fixed-flow analysis uses the following assumptions.  For a continuous path $X$, write $\|X\|_t:=\sup_{0\le s\le t}|X_s|$.

\textbf{Assumption (H0) [Smoothness]:}

$(\mathrm{i})$ The coefficient functionals $F,\Sigma,l,g$ are smooth in $t$ and jointly continuous in $(X,u_t,\mu_t)$.

$(\mathrm{ii})$ The running cost $l$ is continuously Fréchet differentiable in the path $X$ and admits a twice continuously differentiable extension in the control variable to an open neighborhood of $\mathbb U$.  Control derivatives refer to this extension.

$(\mathrm{iii})$ The derivatives $\nabla_Xl$, $\nabla_ul$, and $\nabla_{uu}^2l$ are jointly continuous.

\textbf{Assumption (H1) [Control Space, Initial State, and Reference Basis]:}

$(\mathrm{i})$ The admissible control space $\mathbb{U} \subseteq \mathbb{R}^{d_u}$ is nonempty, compact, and convex.

$(\mathrm{ii})$ For some $\alpha_0>0$, the initial state satisfies
\begin{equation}
    \mathbb E^{\mathbb P}\!\left[\exp\!\left(\alpha_0|X_0|^2\right)\right]<\infty .
\end{equation}

$(\mathrm{iii})$ The reference filtration $\mathbb F^0$ is the usual augmentation of $\mathcal F_t^0=\sigma(X_0,V_s:0\le s\le t)$, where $X_0$ is independent of $V$. It has the predictable representation property with respect to $V$: each square-integrable $(\mathbb F^0,\mathbb P)$-martingale $M$ has the unique form $M_t=M_0+\int_0^t Z_s^\top dV_s$, where $Z$ is predictable and $\mathbb E^{\mathbb P}[\int_0^T|Z_s|^2ds]<\infty$.

\textbf{Assumption (H2) [Diffusion Properties]:}

$(\mathrm{i})$ The augmented diffusion matrix $\Sigma(t,X)$ and its inverse are progressively measurable and uniformly bounded.  For some $L_\Sigma>0$, {\small
\begin{equation}
\|\Sigma(t,X)-\Sigma(t,X')\|
+\|\Sigma^{-1}(t,X)-\Sigma^{-1}(t,X')\|
\le L_\Sigma\|X-X'\|_t
\end{equation}
}
for all $t\in[0,T]$ and paths $X,X'$.

$(\mathrm{ii})$ The matrix $\Sigma(t,X)\Sigma(t,X)^\top$ is uniformly positive definite.

\textbf{Assumption (H3) [Affine Drift Structure]:}

$(\mathrm{i})$ For appropriately dimensioned $F_0,F_1$, the augmented drift is affine in $u_t$:
\begin{equation}
    F(t, X, u_t, \mu_t) = F_0(t, X, \mu_t) + F_1(t, X, \mu_t) u_t.
\end{equation}

$(\mathrm{ii})$ For some $C_{F_0},C_{F_1}>0$ and all $t\in[0,T]$, $u_t\in\mathbb U$,
\begin{equation}
    |F_0(t, X, \mu_t)| \le C_{F_0} \left[ 1 + \|X\|_t + \mathcal{W}_2(\mu_t, \delta_0) \right], \quad |F_1(t, X, \mu_t)| \le C_{F_1}.
\end{equation}

$(\mathrm{iii})$ For some $L_{F_0},L_{F_1}>0$, all paths $X,X'$, and $\mu_t,\mu_t'\in\mathcal P_2(\mathbb R^{d_x})$,
\begin{align}
    |F_0(t, X, \mu_t) - F_0(t, X', \mu'_t)| &\le L_{F_0} \left[ \|X - X'\|_t + \mathcal{W}_2(\mu_t, \mu'_t) \right], \\
    |F_1(t, X, \mu_t) - F_1(t, X', \mu'_t)| &\le L_{F_1} \left[ \|X - X'\|_t + \mathcal{W}_2(\mu_t, \mu'_t) \right].
\end{align}

\textbf{Assumption (H4) [Cost Functional Properties]:}

$(\mathrm{i})$ For some $C_l,C_g>0$, the running and terminal costs satisfy
\begin{align}
    |l(t, X, u_t, \mu_t)| &\le C_l \left[ 1 + \|X\|_t^2 + |u_t|^2 + \mathcal{W}_2(\mu_t, \delta_0)^2 \right], \\
    |g(X_T)| &\le C_g \left( 1 + |X_T|^2 \right).
\end{align}

$(\mathrm{ii})$ For some $L_l>0$, all paths $X,X'$, controls $u_t,u_t'$, and distributions $\mu_t,\mu_t'$,
\begin{equation}
\begin{split}
    |l(t, X,u_t,\mu_t) - l(t, X', u'_t,\mu'_t)| \le& L_{l} \big[ \|X - X'\|_t + |u_t-u'_t| + \mathcal{W}_2(\mu_t, \mu'_t) \big] \\
    &\cdot \big[ 1 + \|X\|_t + \|X'\|_t + |u_t| + |u'_t| \\
    &\quad + \mathcal{W}_2(\mu_t, \delta_0) + \mathcal{W}_2(\mu'_t, \delta_0) \big].
\end{split}
\end{equation}

$(\mathrm{iii})$ For some $\lambda>0$, the running cost is strongly convex in $u_t$: $\nabla_{uu}^2l\ge\lambda I_{d_u}$.

$(\mathrm{iv})$ There exists $L_{\nabla l}>0$ such that, for all $t\in[0,T]$, $u\in\mathbb U$, paths $X,X'$, and $\mu_t,\mu_t'\in\mathcal P_2(\mathbb R^{d_x})$,
\begin{equation}
    |\nabla_ul(t,X,u,\mu_t)-\nabla_ul(t,X',u,\mu_t')|
    \le L_{\nabla l}\big[\|X-X'\|_t+\mathcal W_2(\mu_t,\mu_t')\big].
    \label{eq:running_cost_gradient_stability}
\end{equation}

\textbf{Assumption (H5) [Compact WPR Strategy Class]:}

For some $L_\Phi>0$, the strategy class in \eqref{eq:admissible_control} is {\small
\begin{equation}
    \Phi_{ad}:=\big\{\phi\in C([0,T]\times\mathbb R^{d_y};\mathbb U):
    |\phi(t,y)-\phi(s,z)|\le L_\Phi(|t-s|+|y-z|)\big\}.
    \label{eq:Phi_ad}
\end{equation}
}
For the fixed flow, assume also that some $p_\Lambda>1$ satisfies
\begin{equation}
    \sup_{u\in\mathcal U_{ad}}
    \mathbb E^{\mathbb P}\!\left[(\Lambda^u(T))^{p_\Lambda}\right]<\infty .
    \label{eq:H5_likelihood_bound}
\end{equation}
Thus $\Phi_{ad}$ is nonempty, convex, locally-uniformly closed, and uniformly bounded and equicontinuous on compact subsets.

\begin{theorem}\label{thm:ref_solution}
Suppose Assumptions (H0), (H1), and (H2) hold.
\begin{enumerate}
    \item There exists a pathwise unique continuous strong solution $X$ to the uncontrolled SDE \eqref{eq:ref_dyn}, adapted to $\mathbb F^0$ and satisfying $\mathbb E^{\mathbb P}[\sup_{t\le T}|X_t|^2]<\infty$.
    \item For any $p \ge 1$, there exists a constant $C_p > 0$ such that
    \begin{equation}
        \mathbb{E}^{\mathbb{P}} \left[ \sup_{t \in [0,T]} |X_t|^{2p} \right]
        \le C_p\left(1+\mathbb E^{\mathbb P}[|X_0|^{2p}]\right)<\infty.
    \end{equation}
    \item There exists $\alpha_X>0$ such that
    \begin{equation}
        \mathbb E^{\mathbb P}\!\left[
        \exp\!\left(\alpha_X\|X\|_T^2\right)\right]<\infty .
        \label{eq:reference_exponential_moment}
    \end{equation}
\end{enumerate}
\end{theorem}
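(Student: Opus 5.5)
The plan is to treat \eqref{eq:ref_dyn} as a path-dependent SDE with a drift-free diffusion coefficient. The three claims then follow from the standard Picard, BDG and Gronwall arguments, combined with the uniform boundedness of $\Sigma$ in (H2)(i).

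For part 1, I will run a Picard iteration in the space of continuous $\mathbb F^0$-adapted processes with $\mathbb E^{\mathbb P}[\|X\|_T^2]<\infty$. Set $X^{(0)}_t\equiv X_0$ and $X^{(n+1)}_t=X_0+\int_0^t\Sigma(s,X^{(n)})dV_s$. Progressive measurability of $\Sigma(t,\cdot)$ in (H2)(i) makes each iterate adapted. Doob's inequality and the Lipschitz bound $\|\Sigma(t,X)-\Sigma(t,X')\|\le L_\Sigma\|X-X'\|_t$ give
\[
\mathbb E^{\mathbb P}\big[\|X^{(n+1)}-X^{(n)}\|_t^2\big]\le 4L_\Sigma^2\int_0^t\mathbb E^{\mathbb P}\big[\|X^{(n)}-X^{(n-1)}\|_s^2\big]ds.
\]
Iterating yields the factorial bound $C(4L_\Sigma^2t)^n/n!$, so the iterates converge uniformly a.s. along a subsequence and in $L^2$ to a continuous adapted solution. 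Note that $\mathbb E|X_0|^2<\infty$ follows from (H1)(ii). For pathwise uniqueness, I take two solutions on the same basis and apply the same estimate with Gronwall, which forces $\mathbb E\|X-X'\|_T^2=0$. Since $\Sigma$ is bounded, no localization is needed.

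For part 2, I apply BDG with exponent $2p$ to the martingale $M_t:=\int_0^t\Sigma(s,X)dV_s$:
\[
\mathbb E\|M\|_T^{2p}\le C_p\,\mathbb E\Big(\int_0^T\|\Sigma\|^2ds\Big)^{p}\le C_p\|\Sigma\|_\infty^{2p}T^p.
\]
Combined with $\|X\|_T\le |X_0|+\|M\|_T$, this gives the bound $C_p(1+\mathbb E|X_0|^{2p})$ directly, without Gronwall. Finiteness follows because the exponential moment in (H1)(ii) dominates every polynomial moment of $|X_0|$.

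Part 3 is the only step that needs care, since it requires Gaussian tails for $\|M\|_T$ rather than polynomial ones. I will use the exponential martingale inequality for continuous martingales with bounded quadratic variation. Each component satisfies $\langle M^i\rangle_T\le \|\Sigma\|_\infty^2T=:K$, so
\[
\mathbb P(\|M\|_T\ge r)\le 2d\,e^{-r^2/(2dK)}.
\]
Equivalently, by Dambis--Dubins--Schwarz, $\|M^i\|_T$ is dominated by the running maximum of a Brownian motion up to time $K$. Integrating this tail shows $\mathbb E\exp(\beta\|M\|_T^2)<\infty$ whenever $\beta<1/(2dK)$. Then $\|X\|_T^2\le 2|X_0|^2+2\|M\|_T^2$, and Cauchy--Schwarz on the product of exponentials gives
\[
\mathbb E\exp(\alpha_X\|X\|_T^2)\le\big(\mathbb E e^{4\alpha_X|X_0|^2}\big)^{1/2}\big(\mathbb E e^{4\alpha_X\|M\|_T^2}\big)^{1/2}.
\]
Choosing $\alpha_X<\min\{\alpha_0/4,\,1/(8dK)\}$ makes both factors finite, using (H1)(ii) for the first. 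Independence of $X_0$ and $V$ is not even needed for this bound.
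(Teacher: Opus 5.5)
Your proposal is correct and follows the paper's proof essentially step for step. You use successive approximation for the pathwise unique strong solution, where the paper simply cites Cohen--Elliott, and BDG applied to $M=\int_0^\cdot\Sigma(s,X)\,dV_s$ for the $2p$-moments. You then get a Gaussian tail for $\|M\|_T$ from $\langle M^i\rangle_T\le\|\Sigma\|_\infty^2T$ and apply Cauchy--Schwarz with $\|X\|_T^2\le 2|X_0|^2+2\|M\|_T^2$, which yields the same threshold $4\alpha_X<\min\{\alpha_0,(2d\|\Sigma\|_\infty^2T)^{-1}\}$. The only cosmetic difference is that you derive the tail bound from the exponential martingale inequality (constant $2d$), whereas the paper uses Dambis--Dubins--Schwarz and the reflection principle (constant $4d$); this has no effect on the argument.
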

\begin{proof}
See Appendix \ref{app:proof_ref_solution}.
\end{proof}

\begin{theorem}\label{thm:girsanov}
Suppose Assumptions (H0)--(H4) hold.  Let $\mu\in C([0,T];\mathcal P_2(\mathbb R^{d_x}))$ be fixed.  Set $C_4(T):=T\|\Sigma^{-1}\|_\infty^2C_{F_0}^2$ and assume $C_4(T)<\alpha_X$, where $\alpha_X$ is given by \eqref{eq:reference_exponential_moment}.  Then, for every $u\in\mathcal U_{ad}$, the following assertions hold.

\begin{enumerate}
\item The Novikov condition is satisfied on $[0,T]$:
\begin{equation} \label{eq:novikov}
    \mathbb{E}^{\mathbb{P}} \left[ \exp \left( \frac{1}{2} \int_0^T \|\Sigma^{-1}(t, X) F(t, X, u_t, \mu_t)\|^2 dt \right) \right] < \infty.
\end{equation}
Consequently, $\Lambda^u$ is a uniformly integrable $(\mathbb F^0,\mathbb P)$-martingale.

\item Let $\mathbb P^u$ be the equivalent controlled probability measure defined by \eqref{eq:measure_change}.  Then $V^u$ in \eqref{eq:BM_Original} is a Brownian motion under $\mathbb P^u$, and $(X,V^u)$ is a weak solution of \eqref{eq:augmented_dyn}, unique in law.
\end{enumerate}
\end{theorem}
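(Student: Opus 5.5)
The plan is to prove the Novikov bound directly from the linear growth in (H3) and the Gaussian-type moment of Theorem \ref{thm:ref_solution}. Uniqueness in law will then follow from a localized reverse Girsanov argument combined with pathwise uniqueness of the reference SDE \eqref{eq:ref_dyn}.

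\emph{Step 1 (Novikov).} Write $\theta(t)=\Sigma^{-1}(t,X)\bigl(F_0(t,X,\mu_t)+F_1(t,X,\mu_t)u_t\bigr)$. Three facts control the non-path terms:
\begin{itemize}
\item (H1)(i) gives compactness of $\mathbb U$, so $|u_t|\le C_{\mathbb U}$;
\item continuity of $\boldsymbol\mu$ into $\mathcal P_2$ gives $M_\mu:=\sup_{t\le T}\mathcal W_2(\mu_t,\delta_0)<\infty$;
\item (H3)(ii) bounds $F_0$ and $F_1$, and (H2)(i) bounds $\Sigma^{-1}$.
\end{itemize}
Together these yield, for any $\varepsilon>0$,
\[
|\theta(t)|^2\le \|\Sigma^{-1}\|_\infty^2\Bigl[(1+\varepsilon)C_{F_0}^2\|X\|_T^2+C_\varepsilon\bigl(1+M_\mu^2+C_{F_1}^2C_{\mathbb U}^2\bigr)\Bigr],
\]
using $(a+b)^2\le(1+\varepsilon)a^2+C_\varepsilon b^2$. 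Hence
\[
\tfrac12\int_0^T|\theta|^2dt\le \tfrac{1+\varepsilon}{2}\,C_4(T)\|X\|_T^2+K_\varepsilon .
\]
Because $C_4(T)<\alpha_X$, we may choose $\varepsilon$ small enough that $\tfrac{1+\varepsilon}{2}C_4(T)\le\alpha_X$; in fact $\varepsilon=1$ already suffices. Then \eqref{eq:reference_exponential_moment} gives \eqref{eq:novikov}. Novikov's criterion makes $\Lambda^u$ a true $(\mathbb F^0,\mathbb P)$-martingale on $[0,T]$. It is closed by $\Lambda^u(T)\in L^1$, so it is uniformly integrable. This step is routine.

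\emph{Step 2 (existence).} Define $\mathbb P^u$ by \eqref{eq:measure_change}. It is equivalent to $\mathbb P$ since $\Lambda^u(T)>0$. Girsanov's theorem shows that $V^u$ in \eqref{eq:BM_Original} is an $(\mathbb F^0,\mathbb P^u)$-Brownian motion. Substituting $dV=dV^u+\theta\,dt$ into \eqref{eq:ref_dyn} shows that $X$ solves \eqref{eq:augmented_dyn} with driving noise $V^u$. A key point is that $u_t=\phi(t,y_t)$ is a nonanticipative functional of the path $X$. Therefore $\theta(t)=\Theta(t,X_{\cdot\wedge t})$ for a fixed progressively measurable functional $\Theta$, and the closed-loop equation is a genuine path-dependent SDE.

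\emph{Step 3 (uniqueness in law; the main obstacle).} Let $(\tilde X,\tilde V)$ on $(\tilde\Omega,\tilde{\mathbb F},\tilde{\mathbb P})$ be any weak solution of \eqref{eq:augmented_dyn} with the same feedback $\phi$. Set $\tau_n:=\inf\{t:\|\tilde X\|_t\ge n\}\wedge T$. On $[0,\tau_n]$ the process $\Theta(t,\tilde X)$ is bounded, so the stopped reverse density
\[
Z^n_t:=\exp\Bigl(-\int_0^{t\wedge\tau_n}\Theta^\top d\tilde V-\tfrac12\int_0^{t\wedge\tau_n}|\Theta|^2ds\Bigr)
\]
is a true martingale, by Novikov with bounded integrand. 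Under $\mathbb Q_n:=Z^n_T\cdot\tilde{\mathbb P}$, Girsanov shows that $\tilde V+\int_0^{\cdot\wedge\tau_n}\Theta\,ds$ is a Brownian motion. Consequently $\tilde X^{\tau_n}$ coincides with the stopped solution of the driftless equation \eqref{eq:ref_dyn}.

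By Theorem \ref{thm:ref_solution}(1) and Yamada--Watanabe, the law of $(X,V)$ for \eqref{eq:ref_dyn} is unique. Hence the $\mathbb Q_n$-law of the stopped pair is determined. Inverting, for bounded measurable $\Psi$ of the stopped paths,
\[
\mathbb E^{\tilde{\mathbb P}}\bigl[\Psi(\tilde X^{\tau_n},\tilde V^{\tau_n})\bigr]=\mathbb E^{\mathbb P}\bigl[\Lambda^u(\tau_n)\,\Psi(X^{\tau_n},V^{u,\tau_n})\bigr],
\]
where the right-hand side is an expression that does not depend on the chosen solution.

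Finally we let $n\to\infty$. Continuity of paths gives $\tau_n\uparrow T$ with $\tau_n=T$ eventually, almost surely under both $\tilde{\mathbb P}$ and $\mathbb P^u$. Therefore the laws agree on events $\{\tau_n=T\}$ whose probabilities tend to one, and the two laws coincide. The delicate points are two: keeping the stopped change of measure consistent with the filtration generated by $(X_0,\tilde V)$, and checking that $\phi(t,y_t)$ is determined by the path, so that no extra randomness enters. I would handle both by working on the canonical path space and applying the standard localization argument of Karatzas--Shreve, Prop.~5.3.10.
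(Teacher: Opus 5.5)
Your proof is correct. Steps~1--2 coincide with the paper's: your choice $\varepsilon=1$ is exactly the paper's $(a+b)^2\le 2a^2+2b^2$, which gives $\tfrac12\int_0^T|\theta|^2dt\le Ta_\mu^2+C_4(T)\|X\|_T^2$, and Novikov and Girsanov then follow as in the paper.

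The two routes differ only in how the localization is removed in the uniqueness step.

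\textbf{The paper's route.} It localizes by $\tau_n=\inf\{t:\int_0^t|\bar\theta_s|^2ds\ge n\}\wedge T$. The reference moment bound under $\mathbb Q_n$ gives $\sup_n\mathbb E^{\bar{\mathbb P}}[Z_T^n\log Z_T^n]<\infty$, hence $Z_T^n\to Z_T$ in $L^1$. So the \emph{unstopped} reverse density is a true martingale on the weak solution's own space. The paper then inverts globally: under $\mathbb Q=Z_T\bar{\mathbb P}$, $\bar X$ solves the reference SDE, and $\widetilde V=\int_0^\cdot\Sigma^{-1}(s,\bar X)d\bar X_s$ makes $d\bar{\mathbb P}/d\mathbb Q$ a functional of $\bar X$.

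\textbf{Your route.} You stop at the first time $\|\tilde X\|_t$ reaches $n$, so the stopped reverse density is a martingale by Novikov with a bounded integrand. You never remove the stopping from the density. Since $\tau_n$ is a functional of the stopped path and $\Lambda^u(\tau_n)=\Lambda^u(T)$ on $\{\tau_n=T\}$, your identity applied to $\Psi\mathbf 1_{\{\tau_n=T\}}$ reads $\mathbb E^{\tilde{\mathbb P}}[\Psi(\tilde X,\tilde V)\mathbf 1_{\{\tau_n=T\}}]=\mathbb E^{\mathbb P^u}[\Psi(X,V^u)\mathbf 1_{\{\tau_n=T\}}]$. Bounded convergence then concludes.

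\textbf{Comparison.} Yours is the Karatzas--Shreve argument. It is shorter and needs no entropy or uniform-integrability estimate, because the limit is identified with $\Lambda^u(T)\cdot\mathbb P$, which Step~1 already shows is the probability measure $\mathbb P^u$. The paper's route pays for the entropy bound but yields a slightly stronger structural fact: every weak solution is, on its own probability space, equivalent to a solution of the reference SDE, with an explicit inverse-Girsanov density.

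One routine point to tighten: to justify that $\tilde X^{\tau_n}$ ``coincides with the stopped solution,'' solve the reference SDE globally under $\mathbb Q_n$ with driver $\tilde V+\int_0^{\cdot\wedge\tau_n}\Theta\,ds$. Then use pathwise uniqueness on $[0,\tau_n]$, noting that $\tau_n$ computed from either path agrees.
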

\begin{proof}
See Appendix \ref{app:proof_girsanov}.
\end{proof}

\section{Optimality Conditions in a Compatible P-Environment}
\label{sec:fixed_P_control}

In the usual fixed-flow formulation, the mean field term alone specifies the best-response problem.  Here it is random and unobserved; fixing it alone leaves its coupling with the initial state and driving noise undetermined.  We therefore fix a deterministic compatible joint law \(P\) of \((X,V,\mu)\), whose input marginal is
\begin{equation}
    \Gamma_P:=P\circ(X_0,V,\mu)^{-1}.
    \label{eq:fixed_P_input_law}
\end{equation}
The parameter \(P\) does not enlarge the controller's information: \(\mu\) remains unobserved and each admissible control has the WPR form \(u_t=\phi(t,y_t)\).  We derive the fixed-\(P\) response here and impose mean field consistency in Section~5.

\subsection{The Compatible Random Environment}

On a usual stochastic basis \((\Omega^P,\mathcal F^P,\mathbb F^P,\mathbb P_0^P)\), let \((X_0^P,V^P,\eta)\) have law \(\Gamma_P\), with \(V^P\) Brownian and \(\eta\) adapted in the compatible filtration.  Assumption (H1)(iii) is required only on the reference basis of Section~2, not on this enlarged basis. Let \(X^P=(x^P,y^P)\) be the pathwise unique solution of
\begin{equation}
    dX_t^P=\Sigma(t,X^P)dV_t^P,\qquad X_{t=0}^P=X_0^P .
    \label{eq:fixed_P_reference_state}
\end{equation}
For \(\phi\in\Phi_{ad}\), define
\begin{align}
    \theta_t^{P,\phi}
    &:=
    \Sigma^{-1}(t,X^P)
    F\bigl(t,X^P,\phi(t,y_t^P),\eta_t\bigr), \nonumber\\
    \Lambda_t^{P,\phi}
    &:=
    \mathcal E\!\left(
    \int_0^\cdot(\theta_s^{P,\phi})^\top dV_s^P
    \right)_t .
    \label{eq:fixed_P_likelihood}
\end{align}

The following condition specifies what it means to compare controls in the same prescribed random environment.

\textbf{Condition (EP) [Environment Preservation]:} For every \(\phi\in\Phi_{ad}\), \(\Lambda^{P,\phi}\) is a true martingale. Define the controlled probability measure and its Brownian motion by
\begin{equation}
    d\mathbb P^{P,\phi}
    =\Lambda_T^{P,\phi}d\mathbb P_0^P,\qquad
    V_t^{P,\phi}
    :=V_t^P-\int_0^t\theta_s^{P,\phi}ds,
    \label{eq:fixed_P_controlled_measure}
\end{equation}
Then \(V^{P,\phi}\) is Brownian under \(\mathbb P^{P,\phi}\), and
\begin{equation}
    \mathbb P^{P,\phi}\circ
    (X_0^P,V^{P,\phi},\eta)^{-1}
    =\Gamma_P .
    \label{eq:environment_preservation}
\end{equation}
Thus a deviation changes the controlled state law while the prescribed joint environment law \(\Gamma_P\) remains fixed.  Compatibility excludes future-noise leakage, whereas \emph{(EP)} supplies this invariance.  A compatible realization satisfying \emph{(EP)} is called an \emph{EP-compatible \(P\)-environment}; all results below are conditional on such a realization.

Fix \(p_0\ge4\), and write \(M_r(\nu):=\int_{\mathbb R^{d_x}}|z|^r\nu(dz)\).  We assume
\begin{equation}
    \int\left(
    |x_0|^{p_0}+\sup_{t\le T}M_{p_0}(\eta_t)
    \right)\Gamma_P(dx_0,dv,d\eta)<\infty .
    \label{eq:fixed_P_input_moment}
\end{equation}
The condition \(p_0\ge4\) ensures that quadratically growing cost terms are square-integrable; together with weak continuity of the input paths, the same bound yields their almost sure \(\mathcal W_2\)-continuity by uniform integrability of second moments. Under \(\mathbb P^{P,\phi}\), the reference coordinate solves
\begin{equation}
    dX_t^P
    =F\bigl(t,X^P,\phi(t,y_t^P),\eta_t\bigr)dt
    +\Sigma(t,X^P)dV_t^{P,\phi}.
    \label{eq:fixed_P_controlled_state}
\end{equation}
The corresponding objective is
\begin{equation}
    J_P(\phi):=
    \mathbb E^{\mathbb P^{P,\phi}}\!\left[
    \int_0^T l\bigl(t,X^P,\phi(t,y_t^P),\eta_t\bigr)dt
    +g(X_T^P)\right].
    \label{eq:fixed_P_cost}
\end{equation}

\begin{lemma}
\label{lem:fixed_P_controlled_moment}
Suppose Assumptions (H1)(i), (H2), and (H3) and Condition \emph{(EP)} hold, together with \eqref{eq:fixed_P_input_moment}.  Then a constant \(C_P<\infty\), independent of \(\phi\in\Phi_{ad}\), exists such that
\begin{equation}
    \sup_{\phi\in\Phi_{ad}}
    \mathbb E^{\mathbb P^{P,\phi}}\!\left[
    \|X^P\|_T^{p_0}+\sup_{t\le T}M_{p_0}(\eta_t)\right]
    \le C_P .
    \label{eq:fixed_P_controlled_moment}
\end{equation}
\end{lemma}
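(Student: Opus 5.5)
The bound splits into two parts, the environment term and the state term, and I will treat them separately.

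\emph{Environment term.} Condition (EP), through \eqref{eq:environment_preservation}, gives that $\eta$ has the same law under every $\mathbb P^{P,\phi}$, namely its $\Gamma_P$-marginal. Hence
\[
\mathbb E^{\mathbb P^{P,\phi}}\Bigl[\sup_{t\le T}M_{p_0}(\eta_t)\Bigr]
=\int\sup_{t\le T}M_{p_0}(\eta_t)\,\Gamma_P(dx_0,dv,d\eta),
\]
and this is finite by \eqref{eq:fixed_P_input_moment} and independent of $\phi$. The same identity applied to $X_0^P$ gives $\mathbb E^{\mathbb P^{P,\phi}}|X_0^P|^{p_0}=\int|x_0|^{p_0}d\Gamma_P<\infty$. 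The point is that we never work with the likelihood $\Lambda^{P,\phi}$ itself, only with laws under the controlled measure, so no exponential moments are needed.

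\emph{State term.} Under $\mathbb P^{P,\phi}$ the process $X^P$ solves \eqref{eq:fixed_P_controlled_state}, and $V^{P,\phi}$ is a Brownian motion there. I will run a standard $L^{p_0}$ estimate on this equation. Write
\[
X_t^P=X_0^P+\int_0^t F(s,X^P,\phi(s,y^P_s),\eta_s)\,ds+\int_0^t\Sigma(s,X^P)\,dV_s^{P,\phi}.
\]
\begin{enumerate}
\item \emph{Drift.} By (H3)(i)--(ii) and compactness of $\mathbb U$ from (H1)(i),
\[
|F|\le C_{F_0}\bigl(1+\|X^P\|_s+\mathcal W_2(\eta_s,\delta_0)\bigr)+C_{F_1}\sup_{u\in\mathbb U}|u|.
\]
Also $\mathcal W_2(\eta_s,\delta_0)=M_2(\eta_s)^{1/2}\le M_{p_0}(\eta_s)^{1/p_0}$ by Jensen, since $p_0\ge 2$. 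Hölder's inequality in time then bounds the $p_0$-th power of the drift integral by $C\int_0^t\bigl(1+\|X^P\|_s^{p_0}+M_{p_0}(\eta_s)\bigr)ds$.
\item \emph{Diffusion.} By (H2)(i), $\Sigma$ is uniformly bounded. The Burkholder--Davis--Gundy inequality under $\mathbb P^{P,\phi}$ gives $\mathbb E\sup_{t\le T}\bigl|\int_0^t\Sigma\,dV^{P,\phi}\bigr|^{p_0}\le C\,T^{p_0/2}$.
\end{enumerate}

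\emph{Closing with Gronwall.} Taking the supremum over $t$ and expectations, with $f(t):=\mathbb E^{\mathbb P^{P,\phi}}\|X^P\|_t^{p_0}$, we get
\[
f(t)\le C\Bigl(1+\mathbb E|X_0^P|^{p_0}+T\,\mathbb E\sup_{s}M_{p_0}(\eta_s)\Bigr)+C\int_0^t f(s)\,ds.
\]
The constants depend only on $C_{F_0}$, $C_{F_1}$, $\|\Sigma\|_\infty$, $\operatorname{diam}\mathbb U$, $T$ and $p_0$. Gronwall's lemma then gives a bound independent of $\phi$.

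The main obstacle is that Gronwall requires $f$ to be finite a priori. I would handle this by localization. Let $\tau_n:=\inf\{t:|X_t^P|\ge n\}\wedge T$ and run the argument for the stopped process. Its drift is bounded by $C(1+n+\sup_s M_{p_0}(\eta_s)^{1/p_0})$, which is integrable, so each stopped $f_n$ is finite. The resulting bound is uniform in $n$, and Fatou's lemma as $n\to\infty$ completes the proof. Only continuity of $X^P$ is needed to ensure $\tau_n\uparrow T$.
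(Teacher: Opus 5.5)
Your proof is correct and follows the paper's argument: (EP) fixes the laws of $X_0^P$ and $\eta$ under every $\mathbb P^{P,\phi}$. The linear-growth drift bound from (H3) and compact $\mathbb U$, the estimate $\mathcal W_2^{p_0}(\eta_t,\delta_0)\le M_{p_0}(\eta_t)$, BDG with bounded $\Sigma$, and Gronwall then give a $\phi$-independent bound. Your localization via $\tau_n$ and Fatou's lemma is a valid extra step that the paper leaves implicit when it applies Gronwall directly.
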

\begin{proof}
See Appendix \ref{app:proof_fixed_P_controlled_moment}.
\end{proof}

\subsection{The WPR Belief Measure and Variational System}

The conditional expectations in the maximum principle are taken with respect to the instantaneous \(\sigma\)-field \(\mathcal G_t^I=\sigma(y_t^P)\).  Since both the coefficients and the random input may depend on their histories, the required conditional object is a path law rather than a current state filter.

Let
\[
    \mathbf X_T:=C([0,T];\mathbb R^{d_x+d_y}),\qquad
    \mathbf M_T:=C([0,T];\mathcal P_2(\mathbb R^{d_x})).
\]
Equip them with the supremum norm and \(\sup_{r\le T}\mathcal W_2(\mu_r,\nu_r)\), respectively, and set \(\Xi:=\mathbf X_T\times\mathbf M_T\) and \(\widehat\Xi:=\mathbf X_T\times\mathbf M_T \times\mathbb R^{d_v}\), where \(d_v:=d_w+d_b=d_x+d_y\) by the square invertibility of \(\Sigma\) in Assumption (H2).  For either path space, define the nonanticipative truncation \(\mathsf S_tz\) by \((\mathsf S_tz)_r:=z_{r\wedge t}\), \(0\le r\le T\), and write \(\xi=(\xi^X,\xi^\mu,\xi^q)\in\widehat\Xi\) for the coordinates of the state path, mean field term, and adjoint integrand.

\begin{definition}[Fixed-\(P\) WPR belief measures]
\label{def:wpr_belief}
For \(\phi\in\Phi_{ad}\), let \(\pi_t^{P,\phi}\) be a regular conditional distribution of \((\mathsf S_tX^P,\mathsf S_t\eta)\) given \(\mathcal G_t^I\):
\begin{equation}
    \pi_t^{P,\phi}(A)
    :=
    \mathbb P^{P,\phi}\!\left(
    (\mathsf S_tX^P,\mathsf S_t\eta)\in A
    \mid\mathcal G_t^I\right),
    \qquad A\in\mathcal B(\Xi).
    \label{eq:fixed_P_wpr_belief}
\end{equation}
For every integrable Borel functional \(\Phi\) of \((\mathsf S_tX^P,\mathsf S_t\eta)\), the Kallianpur--Striebel formula reads
\begin{equation}
    \pi_t^{P,\phi}(\Phi)
    =
    \frac{\mathbb E^{\mathbb P_0^P}\!\left[
    \Lambda_t^{P,\phi}\Phi(\mathsf S_tX^P,\mathsf S_t\eta)
    \mid\mathcal G_t^I\right]}
    {\mathbb E^{\mathbb P_0^P}\!\left[
    \Lambda_t^{P,\phi}\mid\mathcal G_t^I\right]} .
    \label{eq:ks_eval}
\end{equation}

If \(\bar\phi^P\) is optimal and \(q^P\) is the Brownian integrand in \cref{thm:adjoint}, the extended WPR belief measure is
\begin{equation}
    \widehat\pi_t^{P,*}(B)
    :=
    \mathbb P^{P,\bar\phi^P}\!\left(
    (\mathsf S_tX^P,\mathsf S_t\eta,q_t^P)\in B
    \mid\mathcal G_t^I\right),
    \qquad B\in\mathcal B(\widehat\Xi).
    \label{eq:extended_wpr_belief}
\end{equation}
For \(dt\)-almost every \(t\) and almost every \(y\) under the observation marginal, this measure belongs to \(\mathcal P_2(\widehat\Xi)\) by \cref{lem:fixed_P_controlled_moment} and \(q^P\in L^2(dt\otimes d\mathbb P^{P,\bar\phi^P})\).
\end{definition}

The following theorem constructs the adjoint system using the Kunita--Watanabe decomposition.

\begin{theorem}
\label{thm:adjoint}
Suppose Assumptions (H0), (H1)(i), and (H2)--(H4), the strategy-class part of Assumption (H5), Condition \emph{(EP)}, and \eqref{eq:fixed_P_input_moment} hold.  Let \(\bar\phi^P\in\Phi_{ad}\) minimize \(J_P\). Then there are an adapted c\`adl\`ag process \(p^P\), a predictable \(\mathbb R^{d_v}\)-valued process \(q^P\), and a martingale \(N^P\) strongly orthogonal to every component of \(V^{P,\bar\phi^P}\), such that
\[
    \mathbb E^{\mathbb P^{P,\bar\phi^P}}\!\left[
    \sup_{t\le T}|p_t^P|^2+\int_0^T|q_t^P|^2dt+|N_T^P|^2
    \right]<\infty
\]
and
\begin{equation}
    dp_t^P
    =-l\bigl(t,X^P,\bar\phi^P(t,y_t^P),\eta_t\bigr)dt
    +(q_t^P)^\top dV_t^{P,\bar\phi^P}+dN_t^P,
    \qquad p_T^P=g(X_T^P).
    \label{eq:adjoint_bsde}
\end{equation}
Moreover, for every \(\psi\in\Phi_{ad}\),
\begin{equation}
\begin{split}
    0\le
    \mathbb E^{\mathbb P^{P,\bar\phi^P}}\!\int_0^T
    \Big\langle&
    F_1\bigl(t,X^P,\eta_t\bigr)^\top
    \Sigma^{-1}(t,X^P)^\top q_t^P\\
    &+\nabla_ul\bigl(t,X^P,\bar\phi^P(t,y_t^P),\eta_t\bigr),
    \psi(t,y_t^P)-\bar\phi^P(t,y_t^P)
    \Big\rangle dt .
\end{split}
    \label{eq:variational_ineq}
\end{equation}
\end{theorem}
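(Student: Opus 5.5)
The plan is to build the adjoint triple from the Kunita--Watanabe decomposition of the optimal cost-to-go martingale under \(\mathbb P^{P,\bar\phi^P}\). Then the variational inequality will come from differentiating \(J_P\) along convex perturbations \(\phi^\varepsilon:=\bar\phi^P+\varepsilon(\psi-\bar\phi^P)\), \(\varepsilon\in(0,1]\). This family lies in \(\Phi_{ad}\) because that class is convex, as recorded after Assumption (H5).

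\emph{Step 1: adjoint triple.} Write \(\bar{\mathbb P}:=\mathbb P^{P,\bar\phi^P}\) and \(\bar V:=V^{P,\bar\phi^P}\). Set
\[
\mathcal J:=\int_0^T l\bigl(t,X^P,\bar\phi^P(t,y_t^P),\eta_t\bigr)dt+g(X_T^P).
\]
By (H4)(i), \cref{lem:fixed_P_controlled_moment} with \(p_0\ge4\), and compactness of \(\mathbb U\), we have \(\mathcal J\in L^2(\bar{\mathbb P})\). Let \(M_t:=\mathbb E^{\bar{\mathbb P}}[\mathcal J\mid\mathcal F_t^P]\), taking a càdlàg version. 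Since (H1)(iii) is not available on the enlarged basis, we do not use predictable representation. Instead we apply the Kunita--Watanabe decomposition of the square-integrable martingale \(M\) against the continuous \(\bar{\mathbb P}\)-Brownian motion \(\bar V\), which is Brownian by (EP). This gives
\[
M_t=M_0+\int_0^t(q_s^P)^\top d\bar V_s+N_t^P,
\]
with \(q^P\) predictable, \(\mathbb E\int_0^T|q^P|^2dt<\infty\), and \(N^P\) strongly orthogonal to each component of \(\bar V\). Define
\[
p_t^P:=M_t-\int_0^t l\bigl(s,X^P,\bar\phi^P(s,y_s^P),\eta_s\bigr)ds.
\]
Then \(p_T^P=g(X_T^P)\) and \eqref{eq:adjoint_bsde} holds. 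The bound on \(\sup_t|p^P_t|^2\) follows from Doob's inequality together with the \(L^2\) bound on the running-cost integral.

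\emph{Step 2: derivative of the cost.} By (EP), all controlled measures live on the same basis, so
\[
J_P(\phi^\varepsilon)=\mathbb E^{\bar{\mathbb P}}\bigl[(\Lambda_T^{\varepsilon}/\bar\Lambda_T)\,\mathcal J^\varepsilon\bigr],
\]
where \(\mathcal J^\varepsilon\) is the cost built with \(\phi^\varepsilon\). We use the affine structure (H3): \(\theta^{\varepsilon}-\bar\theta=\varepsilon\,\Sigma^{-1}F_1\delta_t\) with \(\delta_t:=\psi(t,y_t^P)-\bar\phi^P(t,y_t^P)\). Hence
\[
\Lambda^\varepsilon_T/\bar\Lambda_T=\mathcal E\Bigl(\varepsilon\int_0^\cdot(\Sigma^{-1}F_1\delta)^\top d\bar V\Bigr)_T .
\]
Its \(\varepsilon\)-derivative at \(0\) is
\[
\int_0^T(\Sigma^{-1}F_1\delta_t)^\top d\bar V_t ,
\]
and the integrand is bounded, by (H2), (H3)(ii) and compactness of \(\mathbb U\). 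Differentiating \(\mathcal J^\varepsilon\) in \(\varepsilon\) gives \(\int_0^T\langle\nabla_ul,\delta_t\rangle dt\), using (H0)(ii).

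The limit \(\varepsilon\downarrow0\) of the difference quotients is justified by dominated convergence. The domination combines three ingredients: the bounded exponential-martingale integrand gives uniform \(L^r\) bounds on \((\Lambda^\varepsilon_T/\bar\Lambda_T-1)/\varepsilon\); the mean-value theorem applies to \(l\) in \(u\); and (H4)(ii) gives quadratic growth, controlled by \cref{lem:fixed_P_controlled_moment}.

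Optimality of \(\bar\phi^P\) then yields
\[
0\le \mathbb E^{\bar{\mathbb P}}\Bigl[\mathcal J\int_0^T(\Sigma^{-1}F_1\delta_t)^\top d\bar V_t+\int_0^T\langle\nabla_u l,\delta_t\rangle dt\Bigr].
\]

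\emph{Step 3: identification.} Replace \(\mathcal J\) by \(M_T\) and insert the decomposition from Step 1. The constant \(M_0\) integrates to zero against the stochastic integral. The contribution of \(N^P\) vanishes by strong orthogonality, since \(\langle N^P,\bar V^i\rangle=0\). The Itô isometry turns the remaining term into
\[
\mathbb E\int_0^T (q_t^P)^\top\Sigma^{-1}F_1\delta_t\,dt=\mathbb E\int_0^T\langle F_1^\top\Sigma^{-\top}q_t^P,\delta_t\rangle dt ,
\]
which is exactly \eqref{eq:variational_ineq}.

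The main obstacle is Step 2's uniform integrability. The moment bound of \cref{lem:fixed_P_controlled_moment} is stated under each controlled measure separately, so it must be transferred across the Radon--Nikodym densities. I would first try Hölder's inequality: bound \(\Lambda^\varepsilon_T/\bar\Lambda_T\) in every \(L^r(\bar{\mathbb P})\) uniformly in \(\varepsilon\), which holds because the stochastic exponent has a deterministically bounded integrand. Then pair this with the \(p_0\ge4\) moments, so that the quadratic cost terms are handled by exponent \(p_0/2\ge2\).
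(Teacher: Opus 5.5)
Your proposal is correct and follows essentially the same route as the paper. You use a Kunita--Watanabe decomposition of \(\mathbb E^{\mathbb P^{P,\bar\phi^P}}[\mathcal J\mid\mathcal F_t^P]\) against \(V^{P,\bar\phi^P}\) to build \((p^P,q^P,N^P)\), the relative density \(\mathcal E(\varepsilon\int_0^\cdot(\Sigma^{-1}F_1\delta_t)^\top dV_t^{P,\bar\phi^P})\) for the convex perturbation, \(L^2\) limits of the density and cost difference quotients, and strong orthogonality together with the It\^o isometry to identify the first-order term. Two points are only cosmetic: the quadratic growth bound comes from (H4)(i), while (H4)(ii) supplies the linear-growth domination of the \(u\)-difference quotient; and the density-quotient limit is really a uniform-integrability (Vitali) argument rather than dominated convergence, which your uniform \(L^r\) bounds already give.
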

\begin{proof}
See Appendix \ref{app:proof_adjoint}.
\end{proof}

Define the fixed-\(P\) Hamiltonian by
\begin{equation}
    \mathcal H(t,X,u,\eta,q)
    :=
    q^\top\Sigma^{-1}(t,X)F(t,X,u,\eta_t)
    +l(t,X,u,\eta_t).
    \label{eq:hamiltonian}
\end{equation}
For a selected optimizer \(\bar\phi^P\), set
\begin{equation}
    \widehat{\mathcal H}^{P}(t,y,u)
    :=
    \mathbb E^{\mathbb P^{P,\bar\phi^P}}\!\left[
    \mathcal H(t,X^P,u,\eta,q_t^P)\mid y_t^P=y\right].
    \label{eq:cond_hamiltonian}
\end{equation}
Assumptions (H0) and (H2)--(H4), together with \cref{lem:fixed_P_controlled_moment}, give
\[
    |\nabla_u\mathcal H(t,X^P,u,\eta,q_t^P)|
    \le C\left(
    1+|q_t^P|+\|X^P\|_t+
    \mathcal W_2(\eta_t,\delta_0)\right),
    \qquad u\in\mathbb U,
\]
whose right-hand side is integrable under \(dt\otimes d\mathbb P^{P,\bar\phi^P}\).  Conditional dominated convergence therefore yields
\[
    \nabla_u\widehat{\mathcal H}^{P}(t,y,u)
    =
    \mathbb E^{\mathbb P^{P,\bar\phi^P}}\!\left[
    \nabla_u\mathcal H(t,X^P,u,\eta,q_t^P)\mid y_t^P=y\right]
\]
for \(dt\)-almost every \(t\) and almost every \(y\) under the observation marginal.  Equivalently, \(\widehat{\mathcal H}^{P}(t,y,u)\) integrates \(\mathcal H\) against \(\widehat\pi_{t|y}^{P,*}\), averaging out \(\eta\). Thus the conditional Hamiltonian depends on the observation only through \(y\).

\begin{theorem}
\label{thm:wpr_smp}
Under the assumptions of \cref{thm:adjoint},
\begin{equation}
    \mathbb E^{\mathbb P^{P,\bar\phi^P}}\!\int_0^T
    \left\langle
    \nabla_u\widehat{\mathcal H}^{P}
    (t,y_t^P,\bar\phi^P(t,y_t^P)),
    \psi(t,y_t^P)-\bar\phi^P(t,y_t^P)
    \right\rangle dt\ge0
    \label{eq:smp_condition}
\end{equation}
for every \(\psi\in\Phi_{ad}\).
\end{theorem}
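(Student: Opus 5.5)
The plan is to start from the variational inequality \eqref{eq:variational_ineq} of \cref{thm:adjoint} and move its integrand inside the conditional expectation given \(\mathcal G_t^I=\sigma(y_t^P)\). Since \(F\) is affine in \(u\) by (H3)(i), the control gradient of the Hamiltonian \eqref{eq:hamiltonian} is
\[
    \nabla_u\mathcal H(t,X,u,\eta,q)
    =F_1(t,X,\eta_t)^\top\Sigma^{-1}(t,X)^\top q+\nabla_ul(t,X,u,\eta_t).
\]
Evaluated at \(u=\bar\phi^P(t,y_t^P)\), this is exactly the left factor of the inner product in \eqref{eq:variational_ineq}. Hence \eqref{eq:variational_ineq} reads
\[
    0\le\mathbb E^{\mathbb P^{P,\bar\phi^P}}\!\int_0^T\big\langle\nabla_u\mathcal H(t,X^P,\bar\phi^P(t,y_t^P),\eta,q_t^P),\ \psi(t,y_t^P)-\bar\phi^P(t,y_t^P)\big\rangle dt .
\]

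The first step is Fubini. The domination bound displayed before the statement shows \(|\nabla_u\mathcal H|\) is in \(L^1(dt\otimes d\mathbb P^{P,\bar\phi^P})\), and \(\psi-\bar\phi^P\) is bounded by the compactness of \(\mathbb U\) in (H1)(i). So the product is integrable and the \(dt\) and expectation integrals may be exchanged.

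The second step works for \(dt\)-a.e. \(t\). We apply the tower property with respect to \(\sigma(y_t^P)\). Since \(\psi(t,y_t^P)-\bar\phi^P(t,y_t^P)\) is a bounded \(\sigma(y_t^P)\)-measurable vector, it can be pulled out of the conditional expectation. This leaves \(\mathbb E[\nabla_u\mathcal H(t,X^P,u,\eta,q_t^P)\mid y_t^P]\) evaluated at the \(\sigma(y_t^P)\)-measurable point \(u=\bar\phi^P(t,y_t^P)\).

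The main obstacle is justifying this substitution of a random, \(\mathcal G_t^I\)-measurable argument into a conditional expectation defined pointwise in \(u\). I would handle it with a regular version. Take the extended belief \(\widehat\pi^{P,*}_{t|y}\) from \eqref{eq:extended_wpr_belief} and define \(G(y,u):=\int\nabla_u\mathcal H(t,\xi^X,u,\xi^\mu,\xi^q)\,\widehat\pi^{P,*}_{t|y}(d\xi)\). Joint continuity of \(\nabla_ul\) in (H0)(iii), together with the integrable envelope, makes \(G\) continuous in \(u\) by dominated convergence and jointly measurable. The standard freezing lemma for regular conditional distributions then gives
\[
    \mathbb E\big[\nabla_u\mathcal H(t,X^P,U,\eta,q_t^P)\mid y_t^P\big]=G(y_t^P,U)
\]
for any \(\sigma(y_t^P)\)-measurable \(U\). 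Note that \(\mathcal H\) depends on \((\mathsf S_tX^P,\mathsf S_t\eta,q_t^P)\) only through nonanticipative functionals, so integration against \(\widehat\pi^{P,*}_{t|y}\) is legitimate. One can prove the freezing lemma first for simple \(U\) and then pass to a limit using continuity in \(u\) and dominated convergence.

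The final step identifies \(G(y,u)\) with \(\nabla_u\widehat{\mathcal H}^P(t,y,u)\) via the conditional differentiation identity stated before the theorem, valid for a.e. \(y\) under the observation marginal. Substituting back and undoing Fubini yields \eqref{eq:smp_condition}.
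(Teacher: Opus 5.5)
Your proposal is correct and follows essentially the paper's argument. You rewrite \eqref{eq:variational_ineq} via the affine drift, condition on $\sigma(y_t^P)$ for $dt$-a.e.\ $t$, pull out the bounded $\sigma(y_t^P)$-measurable increment $\psi(t,y_t^P)-\bar\phi^P(t,y_t^P)$, and justify evaluation at the random action $\bar\phi^P(t,y_t^P)$ through a version of the conditional gradient that is continuous in $u$ (valid off a null set independent of $u$). The only difference is how you build that version: you integrate against the regular conditional law $\widehat\pi^{P,*}_{t|y}$ and use a freezing lemma, whereas the paper disintegrates $dt\otimes d\mathbb P^{P,\bar\phi^P}$ on a countable dense subset of $\mathbb U$ and extends by continuity; either works, provided you take $\widehat\pi^{P,*}_{t|y}$ jointly Borel in $(t,y)$ so that the final time integral is well defined.
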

\begin{proof}
See Appendix \ref{app:proof_wpr_variational}.
\end{proof}

\subsection{Existence of the \texorpdfstring{Fixed-\(P\)}{Fixed-P} Best Response}

We now state the regularity needed to convert the integrated variational inequality into an admissible pointwise feedback.  Introduce
\begin{equation}
\begin{split}
    c_{\mathcal H,t}(\xi,\zeta)
    &:=(1+|\xi^q|+|\zeta^q|)
    \left(\|\xi^X-\zeta^X\|_t
    +\sup_{0\le r\le t}
    \mathcal W_2(\xi_r^\mu,\zeta_r^\mu)\right)
    +|\xi^q-\zeta^q|,\\
    \mathcal C_{\mathcal H}^t(\rho,\rho')
    &:=\inf_{\Gamma\in\Pi(\rho,\rho')}
    \int_{\widehat\Xi\times\widehat\Xi}
    c_{\mathcal H,t}(\xi,\zeta)\,\Gamma(d\xi,d\zeta).
\end{split}
    \label{eq:hamiltonian_gradient_cost}
\end{equation}

\begin{lemma}
\label{lem:pointwise_hamiltonian_gradient}
Suppose Assumptions (H0) and (H2)--(H4) hold.  There is \(L_{\mathcal H}>0\) such that, for all \(t\in[0,T]\), \(u\in\mathbb U\), and \(\xi,\zeta\in\widehat\Xi\),
\begin{equation}
    \left|
    \nabla_u\mathcal H(t,\xi^X,u,\xi^\mu,\xi^q)
    -\nabla_u\mathcal H(t,\zeta^X,u,\zeta^\mu,\zeta^q)
    \right|
    \le L_{\mathcal H}c_{\mathcal H,t}(\xi,\zeta).
    \label{eq:pointwise_hamiltonian_gradient}
\end{equation}
\end{lemma}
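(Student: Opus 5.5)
We compute $\nabla_u\mathcal H$ explicitly and then estimate its three parts separately. By (H3)(i),
\[
\nabla_u\mathcal H(t,X,u,\eta,q)=F_1(t,X,\eta_t)^\top\Sigma^{-1}(t,X)^\top q+\nabla_ul(t,X,u,\eta_t).
\]
This is the same expression that appears in the variational inequality of \cref{thm:adjoint}. For $\xi,\zeta\in\widehat\Xi$, write $\delta_t:=\|\xi^X-\zeta^X\|_t+\sup_{r\le t}\mathcal W_2(\xi^\mu_r,\zeta^\mu_r)$. We interpret $F_1(t,\xi^X,\xi^\mu_t)$ with the path argument being the truncated path, since all coefficients are nonanticipative. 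The difference of the gradients then splits as
\begin{align*}
&\bigl[F_1(t,\xi^X,\xi^\mu_t)^\top\Sigma^{-1}(t,\xi^X)^\top-F_1(t,\zeta^X,\zeta^\mu_t)^\top\Sigma^{-1}(t,\zeta^X)^\top\bigr]\xi^q\\
&\quad+F_1(t,\zeta^X,\zeta^\mu_t)^\top\Sigma^{-1}(t,\zeta^X)^\top(\xi^q-\zeta^q)
+\bigl[\nabla_ul(t,\xi^X,u,\xi^\mu_t)-\nabla_ul(t,\zeta^X,u,\zeta^\mu_t)\bigr].
\end{align*}

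The second term is bounded by $C_{F_1}\|\Sigma^{-1}\|_\infty|\xi^q-\zeta^q|$, using (H3)(ii) and (H2)(i).

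For the first term, we use the product-difference identity $A B-A'B'=(A-A')B+A'(B-B')$. Combining (H3)(iii) for $F_1$ with the uniform bound on $\Sigma^{-1}$ gives $L_{F_1}\|\Sigma^{-1}\|_\infty\delta_t$. Combining the bound $C_{F_1}$ with the Lipschitz estimate of (H2)(i) for $\Sigma^{-1}$ gives $C_{F_1}L_\Sigma\|\xi^X-\zeta^X\|_t$. Both pieces are multiplied by $|\xi^q|\le 1+|\xi^q|+|\zeta^q|$.

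For the third term, (H4)(iv) gives the bound $L_{\nabla l}\delta_t\le L_{\nabla l}(1+|\xi^q|+|\zeta^q|)\delta_t$, uniformly in $u\in\mathbb U$. We use $\mathcal W_2(\xi^\mu_t,\zeta^\mu_t)\le\sup_{r\le t}\mathcal W_2(\xi^\mu_r,\zeta^\mu_r)$ here and in the first term.

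Summing the three estimates gives the claim with
\[
L_{\mathcal H}:=\max\{(L_{F_1}+C_{F_1}L_\Sigma)\|\Sigma^{-1}\|_\infty+C_{F_1}L_\Sigma+L_{\nabla l},\;C_{F_1}\|\Sigma^{-1}\|_\infty\}.
\]
Any larger constant is admissible, so the exact value is immaterial; what matters is that it depends only on the constants in (H2)--(H4).

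The only delicate point is bookkeeping rather than analysis. The $q$-multiplied term must be attached to the factor $(1+|\xi^q|+|\zeta^q|)$, because $q$ is unbounded. The bare difference $|\xi^q-\zeta^q|$ must appear with a bounded coefficient, which is exactly why we place $\zeta$, and not $\xi$, in the coefficient of $(\xi^q-\zeta^q)$.

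We also need the matrix-norm convention in (H2)(i) and (H3)(ii)--(iii) to be compatible with the vector norms used above. In finite dimensions this costs at most a dimensional constant, which is absorbed into $L_{\mathcal H}$.
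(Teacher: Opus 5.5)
Your proof is correct and is essentially the paper's argument: write $\nabla_u\mathcal H=A_t(\xi)\xi^q+\nabla_ul$ with $A_t=F_1^\top(\Sigma^{-1})^\top$ bounded and Lipschitz in $\|\xi^X-\zeta^X\|_t+\sup_{r\le t}\mathcal W_2(\xi^\mu_r,\zeta^\mu_r)$, telescope the product, and use (H4)(iv) for the cost gradient. The paper telescopes the other way, as $A_t(\xi)(\xi^q-\zeta^q)+(A_t(\xi)-A_t(\zeta))\zeta^q$, so your remark that $\zeta$ must sit in the coefficient of $\xi^q-\zeta^q$ is unnecessary: $A_t(\xi)$ and $A_t(\zeta)$ are bounded by the same constant.
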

\begin{proof}
See Appendix \ref{app:proof_pointwise_hamiltonian_gradient}.
\end{proof}

Choose a version of the extended WPR belief measure in \eqref{eq:extended_wpr_belief} that is jointly Borel in \((t,y)\), using disintegration on \(dt\otimes d\mathbb P^{P,\bar\phi^P}\).

\textbf{Assumption (H6) [Regularity of the Fixed-\(P\) Extended WPR Belief Measure]:}

\((\mathrm{i})\) There is \(C_\pi>0\) such that
\begin{equation}
    \mathcal C_{\mathcal H}^t
    (\widehat\pi_{t|y}^{P,*},\widehat\pi_{t|y'}^{P,*})
    \le C_\pi|y-y'|
    \label{eq:H6_belief_observation_stability}
\end{equation}
for all \(t\in[0,T]\) and \(y,y'\in\mathbb R^{d_y}\).

\((\mathrm{ii})\) There is \(C_m>0\) such that
\begin{equation}
    \int_{\widehat\Xi}
    \left(1+\|\xi^X\|_t^2+|\xi^q|^2
    +\sup_{0\le r\le t}
    \mathcal W_2^2(\xi_r^\mu,\delta_0)\right)
    \widehat\pi_{t|y}^{P,*}(d\xi)
    \le C_m(1+|y|^2).
    \label{eq:H6_conditional_moment}
\end{equation}

\((\mathrm{iii})\) Equip \(\Phi_{ad}\) with its compact-open Borel \(\sigma\)-field.  A jointly Borel version
\[
    D^P(t,y,\phi)
    :=\mathbb E^{\mathbb P_0^P}\!\left[
      \Lambda_t^{P,\phi}\mid y_t^P=y\right]
\]
can be selected on \([0,T]\times\mathbb R^{d_y}\times\Phi_{ad}\), and there is \(\underline d_\Lambda>0\) such that
\begin{equation}
    D^P(t,y,\phi)
    \ge\underline d_\Lambda
    \label{eq:H6_bayes_denominator}
\end{equation}
for all \(t,y\) and \(\phi\in\Phi_{ad}\).  This lower bound fixes a pointwise Bayes version in \eqref{eq:ks_eval}; the almost-everywhere identities require only positivity.

\((\mathrm{iv})\) With
\[
    \Psi^P(t,y,u):=
    \int_{\widehat\Xi}
    \nabla_u\mathcal H(t,\xi^X,u,\xi^\mu,\xi^q)
    \widehat\pi_{t|y}^{P,*}(d\xi),
\]
there is \(C_{\rm time}>0\) such that
\begin{equation}
    |\Psi^P(t,y,u)-\Psi^P(s,y,u)|
    \le C_{\rm time}|t-s|
    \label{eq:H6_time_gradient_stability}
\end{equation}
for all \(s,t,y,u\).  The selected versions satisfy \((\mathrm{i})\)--\((\mathrm{iv})\) also outside the observation support.

\begin{lemma}
\label{lem:conditional_gradient_stability}
Under Assumptions (H0), (H2)--(H4), and parts (i), (ii), and (iv) of Assumption (H6),
\begin{equation}
    |\Psi^P(t,y,u)-\Psi^P(s,y',u)|
    \le L_\Psi(|t-s|+|y-y'|),
    \qquad
    L_\Psi:=\max\{C_{\rm time},L_{\mathcal H}C_\pi\}.
    \label{eq:H6_observation_gradient_stability}
\end{equation}
\end{lemma}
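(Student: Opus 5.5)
We split the increment through an intermediate point and bound each piece separately:
\[
|\Psi^P(t,y,u)-\Psi^P(s,y',u)|
\le |\Psi^P(t,y,u)-\Psi^P(s,y,u)|+|\Psi^P(s,y,u)-\Psi^P(s,y',u)|.
\]
The first term is at most \(C_{\rm time}|t-s|\) by \eqref{eq:H6_time_gradient_stability} in Assumption (H6)(iv), which holds for all \(s,t,y,u\), including points outside the observation support. The rest of the argument concerns the observation increment at the fixed time \(s\). We must show
\[
|\Psi^P(s,y,u)-\Psi^P(s,y',u)|\le L_{\mathcal H}C_\pi|y-y'|.
\]
Adding the two bounds and using \(a|t-s|+b|y-y'|\le\max\{a,b\}(|t-s|+|y-y'|)\) then gives the stated constant \(L_\Psi\).

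For the observation term, fix \(s,u,y,y'\) and write \(\rho=\widehat\pi_{s|y}^{P,*}\) and \(\rho'=\widehat\pi_{s|y'}^{P,*}\). For any coupling \(\Gamma\in\Pi(\rho,\rho')\), the marginal constraints give
\[
\Psi^P(s,y,u)-\Psi^P(s,y',u)=\int_{\widehat\Xi\times\widehat\Xi}\big[\nabla_u\mathcal H(s,\xi^X,u,\xi^\mu,\xi^q)-\nabla_u\mathcal H(s,\zeta^X,u,\zeta^\mu,\zeta^q)\big]\,\Gamma(d\xi,d\zeta).
\]
Lemma~\ref{lem:pointwise_hamiltonian_gradient} bounds the absolute value of the integrand by \(L_{\mathcal H}c_{\mathcal H,s}(\xi,\zeta)\). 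Hence the difference is at most \(L_{\mathcal H}\int c_{\mathcal H,s}\,d\Gamma\). Taking the infimum over \(\Gamma\) gives \(L_{\mathcal H}\,\mathcal C_{\mathcal H}^s(\rho,\rho')\), and (H6)(i) bounds this by \(L_{\mathcal H}C_\pi|y-y'|\).

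The main point needing care is justifying the splitting of the integral. Splitting is legitimate only if each of \(\int\nabla_u\mathcal H\,d\rho\) and \(\int\nabla_u\mathcal H\,d\rho'\) is finite, so that \(\Psi^P\) is well defined and equals the coupled integral. We check this using the growth bound stated after \eqref{eq:cond_hamiltonian}:
\[
|\nabla_u\mathcal H|\le C\big(1+|\xi^q|+\|\xi^X\|_s+\mathcal W_2(\xi^\mu_s,\delta_0)\big).
\]
By Cauchy--Schwarz, the right-hand side is integrable against \(\widehat\pi_{s|y}^{P,*}\), because (H6)(ii) bounds the relevant second moments by \(C_m(1+|y|^2)\).

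The same moment bound also handles the infimum. The cost \(c_{\mathcal H,s}\) is nonnegative, so for any coupling either \(\int c_{\mathcal H,s}\,d\Gamma\) is finite or the bound is trivial; thus passing to the infimum is harmless. Since (H6) is assumed for the selected versions at every \((t,y)\), including outside the observation support, the inequality holds pointwise and not merely almost everywhere. This completes the plan.
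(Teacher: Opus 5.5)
Your proposal is correct and follows essentially the same route as the paper. You get finiteness of \(\Psi^P\) from the growth bound and (H6)(ii), bound the observation increment by integrating the pointwise estimate of Lemma~\ref{lem:pointwise_hamiltonian_gradient} against a coupling and invoking (H6)(i), and add the time increment via (H6)(iv). The only cosmetic differences are that the paper uses an \(\varepsilon\)-optimal coupling and lets \(\varepsilon\downarrow0\) where you take the infimum directly, and that it splits through \((t,y')\) rather than \((s,y)\).
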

\begin{proof}
See Appendix \ref{app:proof_conditional_gradient_stability}.
\end{proof}

\begin{theorem}
\label{thm:fixed_P_existence}
Suppose Assumptions (H0), (H1)(i), and (H2)--(H4), the strategy-class part of Assumption (H5), Condition \emph{(EP)}, and \eqref{eq:fixed_P_input_moment} hold.  Then there exists \(\bar\phi^P\in\Phi_{ad}\) such that
\begin{equation}
    J_P(\bar\phi^P)
    =\inf_{\phi\in\Phi_{ad}}J_P(\phi).
    \label{eq:fixed_P_optimizer}
\end{equation}
Every optimizer satisfies \eqref{eq:smp_condition}.  Fix one optimizer \(\bar\phi^P\), its adjoint integrand \(q^P\), and a jointly Borel version of the associated extended WPR belief measure.  If this selected version satisfies parts (i), (ii), and (iv) of Assumption (H6) and \(L_\Phi\ge L_\Psi/\lambda\), the corresponding conditional Hamiltonian has the unique pointwise minimizer
\begin{equation}
    \phi^P(t,y)
    :=\arg\min_{u\in\mathbb U}
    \widehat{\mathcal H}^{P}(t,y,u).
    \label{eq:pointwise_feedback}
\end{equation}
It belongs to \(\Phi_{ad}\), and its induced control agrees with the selected optimizer \(\bar\phi^P(t,y_t^P)\) \(dt\otimes d\mathbb P^{P,\bar\phi^P}\)-almost everywhere.  Thus the pointwise selector is unique for the selected version of the extended WPR belief measure and induces the same optimal controlled probability measure and cost as \(\bar\phi^P\).
\end{theorem}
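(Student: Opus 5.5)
The proof splits into three parts: existence of an optimizer by compactness, the maximum principle by citation, and identification of the pointwise selector by strong convexity together with a projection argument.

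\emph{Existence.} By Arzel\`a--Ascoli and (H5), $\Phi_{ad}$ is compact in the topology of local uniform convergence on $[0,T]\times\mathbb R^{d_y}$. I will show that $\phi\mapsto J_P(\phi)$ is continuous in this topology and then take a minimizer. Rewrite
\[
J_P(\phi)=\mathbb E^{\mathbb P_0^P}\Big[\int_0^T\Lambda_t^{P,\phi}\,l(t,X^P,\phi(t,y_t^P),\eta_t)\,dt+\Lambda_T^{P,\phi}g(X_T^P)\Big].
\]
Let $\phi_n\to\phi$ locally uniformly. Since $\mathbb U$ is compact, $\phi_n(t,y_t^P)\to\phi(t,y_t^P)$ pointwise. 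By (H3), $\theta^{P,\phi_n}\to\theta^{P,\phi}$ in $L^2(dt\otimes d\mathbb P_0^P)$, with a dominating bound $C(1+\|X^P\|_t+\mathcal W_2(\eta_t,\delta_0))$. Hence the stochastic exponentials satisfy $\Lambda^{P,\phi_n}_t\to\Lambda^{P,\phi}_t$ in probability.

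Uniform integrability of $\Lambda^{P,\phi_n}\cdot(\text{cost})$ follows from the uniform moment bound of \cref{lem:fixed_P_controlled_moment} with $p_0\ge4$. This bound gives $\sup_n\mathbb E^{\mathbb P^{P,\phi_n}}[|\text{cost}|^2]<\infty$. Rewriting
\[
\mathbb E^{\mathbb P_0^P}\big[\Lambda\,|c|\,\mathbf 1_{\{\Lambda|c|>K\}}\big]=\mathbb E^{\mathbb P^{P,\phi_n}}\big[|c|\,\mathbf 1_{\{\cdot\}}\big],
\]
and noting that the $\Lambda^{P,\phi_n}$ are uniformly integrable (Scheff\'e, since all have mean one and converge in probability), we get uniform integrability, so $J_P(\phi_n)\to J_P(\phi)$ by Vitali. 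The optimizer $\bar\phi^P$ exists, and \cref{thm:wpr_smp} applies to every optimizer, giving \eqref{eq:smp_condition}.

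\emph{Pointwise minimizer.} With the selected belief version, $\nabla_u\widehat{\mathcal H}^P(t,y,u)=\Psi^P(t,y,u)$. By (H4)(iii) and (H3) the Hamiltonian is affine in $u$ plus a strongly convex term, and (H6)(ii) lets us differentiate under the integral. So $u\mapsto\widehat{\mathcal H}^P(t,y,u)$ is $\lambda$-strongly convex on the compact convex set $\mathbb U$, and $\phi^P(t,y)$ exists uniquely. It is characterized by
\[
\langle\Psi^P(t,y,\phi^P),\,v-\phi^P\rangle\ge0\qquad\text{for all } v\in\mathbb U.
\]
For Lipschitz continuity, take two points $(t,y)$ and $(s,y')$, test each variational inequality with the other minimizer, add, and use strong monotonicity:
\[
\lambda|\phi^P(t,y)-\phi^P(s,y')|^2\le\langle\Psi^P(s,y',\phi^P(s,y'))-\Psi^P(t,y,\phi^P(s,y')),\,\phi^P(t,y)-\phi^P(s,y')\rangle.
\]
\cref{lem:conditional_gradient_stability} bounds the right side, giving a Lipschitz constant $L_\Psi/\lambda\le L_\Phi$. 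Hence $\phi^P\in\Phi_{ad}$.

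\emph{Identification (main obstacle).} The difficulty is that \eqref{eq:smp_condition} is only an integrated inequality over test functions in $\Phi_{ad}$, which are Lipschitz rather than arbitrary measurable. I will take $\psi=\phi^P$, which is admissible by the previous step. Combined with the pointwise variational inequality at $v=\bar\phi^P(t,y_t^P)$ and strong monotonicity, this gives
\[
\lambda\,\mathbb E\!\int_0^T|\bar\phi^P-\phi^P|^2(t,y_t^P)\,dt\le\mathbb E\!\int_0^T\big[\langle\Psi^P(\bar\phi^P),\phi^P-\bar\phi^P\rangle+\langle\Psi^P(\phi^P),\bar\phi^P-\phi^P\rangle\big]dt\le0.
\]
Therefore $\bar\phi^P(t,y_t^P)=\phi^P(t,y_t^P)$ $dt\otimes d\mathbb P^{P,\bar\phi^P}$-a.e.

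The likelihoods $\Lambda^{P,\phi}$ depend on $\phi$ only through $\phi(t,y_t^P)$ along paths, so they coincide a.e. Consequently the measures $\mathbb P^{P,\bar\phi^P}$ and $\mathbb P^{P,\phi^P}$ agree and the costs are equal. Uniqueness of the selector for the fixed belief version follows from strict convexity.

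The delicate points are a measurability check that $\Psi^P$ is jointly Borel, and the justification that the conditional expectation in \eqref{eq:smp_condition} may be replaced by the selected $\widehat\pi$-version $dt\otimes d\mathbb P$-a.e.
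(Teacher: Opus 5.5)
Your proposal is correct and follows the paper's three-step proof: Arzel\`a--Ascoli compactness plus continuity of $J_P$, then the two variational inequalities with strong monotonicity and \cref{lem:conditional_gradient_stability} for the Lipschitz selector, then $\psi=\phi^P$ in \eqref{eq:smp_condition} for identification. The only variation is that you prove continuity of $J_P$ under $\mathbb P_0^P$ via Scheff\'e and Vitali, whereas the paper uses the relative density $d\mathbb P^{P,\phi^n}/d\mathbb P^{P,\bar\phi^P}$ with its bounded kernel; both arguments are valid. There is one sign slip in your identification display: both terms in the middle bracket are nonnegative (the first by \eqref{eq:smp_condition}, the second by the pointwise variational inequality), so strong monotonicity actually gives $\lambda\,\mathbb E\int_0^T|\bar\phi^P-\phi^P|^2(t,y_t^P)\,dt\le-\mathbb E\int_0^T[\cdots]\,dt\le0$.
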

\begin{proof}
See Appendix \ref{app:proof_fixed_P_existence}.
\end{proof}

\begin{theorem}
\label{thm:lipschitz}
Under the assumptions of \cref{thm:fixed_P_existence}, including parts (i), (ii), and (iv) of Assumption (H6) and \(L_\Phi\ge L_\Psi/\lambda\), the resulting strategy satisfies
\begin{equation}
    |\phi^P(t,y)-\phi^P(s,y')|
    \le\frac{L_\Psi}{\lambda}
    (|t-s|+|y-y'|)
    \label{eq:fixed_P_selector_lipschitz}
\end{equation}
for all \(s,t\in[0,T]\) and \(y,y'\in\mathbb R^{d_y}\).
\end{theorem}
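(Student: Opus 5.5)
The plan is the standard stability argument for minimizers of strongly convex functions under perturbation of their gradient, applied to the conditional Hamiltonian. By the dominated-convergence identity preceding (H6), for every \(t\) and \(y\) the gradient \(\nabla_u\widehat{\mathcal H}^{P}(t,y,u)\) coincides with \(\Psi^P(t,y,u)\) computed against the selected jointly Borel version \(\widehat\pi_{t|y}^{P,*}\). Assumption (H6)(iv) and the last sentence of (H6) make this version defined and regular everywhere, not only on the observation support. \cref{thm:fixed_P_existence} then identifies \(\phi^P(t,y)\) as the unique minimizer of \(u\mapsto\widehat{\mathcal H}^{P}(t,y,u)\) over the compact convex set \(\mathbb U\).

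\textbf{Step 1.} I first record strong convexity of \(u\mapsto\widehat{\mathcal H}^{P}(t,y,u)\). By (H3)(i) the term \(q^\top\Sigma^{-1}F\) in \eqref{eq:hamiltonian} is affine in \(u\), so \(\nabla_{uu}^2\mathcal H=\nabla_{uu}^2 l\ge\lambda I_{d_u}\) by (H4)(iii). Integrating against \(\widehat\pi_{t|y}^{P,*}\), with the integrability supplied by (H6)(ii), gives \(\lambda\)-strong monotonicity of \(\Psi^P(t,y,\cdot)\) on \(\mathbb U\):
\[
\bigl\langle\Psi^P(t,y,u)-\Psi^P(t,y,v),u-v\bigr\rangle\ge\lambda|u-v|^2 .
\]
To prove this I would integrate \(\nabla_{uu}^2 l\) along the segment \([v,u]\subset\mathbb U\), which is legitimate because of the \(C^2\) extension in (H0)(ii).

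\textbf{Step 2.} The minimizer is characterized by the first-order variational inequality
\[
\bigl\langle\Psi^P(t,y,\phi^P(t,y)),\,w-\phi^P(t,y)\bigr\rangle\ge0\qquad\text{for all } w\in\mathbb U .
\]
Set \(u=\phi^P(t,y)\) and \(v=\phi^P(s,y')\). I would test the inequality at \((t,y)\) with \(w=v\) and the inequality at \((s,y')\) with \(w=u\), then add the two. This gives
\[
\bigl\langle\Psi^P(t,y,u)-\Psi^P(s,y',v),\,u-v\bigr\rangle\le0 .
\]
Next I split \(\Psi^P(t,y,u)-\Psi^P(s,y',v)\) as \([\Psi^P(t,y,u)-\Psi^P(t,y,v)]+[\Psi^P(t,y,v)-\Psi^P(s,y',v)]\). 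The first bracket contributes at least \(\lambda|u-v|^2\) by Step 1. The second bracket is bounded by \(L_\Psi(|t-s|+|y-y'|)\) by \cref{lem:conditional_gradient_stability}, applied at the fixed control \(v\). Cauchy--Schwarz then yields \(\lambda|u-v|^2\le L_\Psi(|t-s|+|y-y'|)\,|u-v|\), and dividing by \(|u-v|\) (the case \(u=v\) is trivial) gives \eqref{eq:fixed_P_selector_lipschitz}.

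\textbf{Main obstacle.} The delicate point is that \cref{lem:conditional_gradient_stability} has to be applied pointwise for all \((t,y)\) and \((s,y')\). It must not hold only almost everywhere under the observation marginal. Likewise, the variational inequality must hold at every \((t,y)\), which requires that \(\phi^P\) be defined as the pointwise argmin of \(\widehat{\mathcal H}^P\) computed with the selected version. I will secure this through the convention in (H6) that the selected versions satisfy (i), (ii), and (iv) everywhere. Together with differentiability of \(\widehat{\mathcal H}^P(t,y,\cdot)\) under the integral sign, which holds by the growth bound on \(\nabla_u\mathcal H\) and (H6)(ii), this makes the first-order condition valid everywhere.
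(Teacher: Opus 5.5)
Your proposal is correct and follows the paper's own argument, which is Step~2 in the proof of \cref{thm:fixed_P_existence}: you add the two first-order variational inequalities at \((t,y)\) and \((s,y')\), use \(\lambda\)-strong monotonicity of \(\Psi^P(t,y,\cdot)\) to get \(\lambda|a-b|^2\le\langle\Psi^P(s,y',b)-\Psi^P(t,y,b),a-b\rangle\), and close with \cref{lem:conditional_gradient_stability}. Your extra care that the first-order condition and the lemma must hold everywhere for the selected version, rather than only almost everywhere, is consistent with the convention at the end of (H6).
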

\begin{proof}
This is \eqref{eq:fixed_P_selector_estimate}, established in Step~2 of Appendix \ref{app:proof_fixed_P_existence}.
\end{proof}

\section{Conditional Mean Field Law and Its Path-Posterior KS Representation}
\label{sec:conditional_mean_field}

Fix an EP-compatible \(P\)-environment and the selected optimal strategy \(u_t^*=\phi^P(t,y_t)\) from \cref{thm:fixed_P_existence}.  Write \(\mathbb P^*:=\mathbb P^{P,\phi^P}\), suppress the superscript \(P\), and retain \(\eta\) for the random input.  Let \(\mathbb F^Y\) be the usual augmentation of \(\sigma(y_s:0\le s\le t)\).  Equivalence of \(\mathbb P^*\) and \(\mathbb P_0^P\) on \(\mathcal F_T^P\) gives the same completed observation filtration.

Unlike the WPR belief measure of Section 3, the conditional population law uses \(\mathcal F_t^Y\), whereas the control reads only \(\mathcal G_t^I=\sigma(y_t)\).  Since the coefficients depend on both hidden state history and \(\eta\), we first construct their joint path posterior and then take its current state marginal.

\subsection{Existence of the Conditional Mean Field Law}

For \(t\in[0,T]\), \(A\in\mathcal B(\mathbb R^{d_x})\), and bounded Borel \(\varphi:\mathbb R^{d_x}\to\mathbb R\), set {\small
\begin{equation}
    m_t^Y(A):=\mathbb P^*(x_t^*\in A\mid\mathcal F_t^Y),
    \qquad
    m_t^Y(\varphi):=\mathbb E^{\mathbb P^*}
    [\varphi(x_t^*)\mid\mathcal F_t^Y].
    \label{eq:conditional_marginal_law}
\end{equation}
}
\noindent On the Polish space \(\mathsf E:=C([0,T];\mathbb R^{d_x})\times\mathbf M_T\), set \(Z:=(x^*,\eta)\).  For each fixed \(t\), Ji\v{r}ina's theorem \cite{jirina1959regular,kallenberg2002foundations} gives the regular conditional law
\[
    \Pi_t(\Phi)
    :=\mathbb E^{\mathbb P^*}[\Phi(Z)\mid\mathcal F_t^Y],
    \qquad \Phi\in\mathcal B_b(\mathsf E).
\]
Its restriction to the coordinates up to \(t\) is the posterior of the hidden histories.  With \(e_t(\xi,\zeta):=\xi(t)\), {\small
\begin{equation}
    m_t^Y=\Pi_t\circ e_t^{-1},\qquad
    m_t^Y(\varphi)=\Pi_t(\varphi\circ e_t).
    \label{eq:marginal_of_path_filter}
\end{equation}
}

As shown in Appendix \ref{app:proof_joint_posterior_innovation}, an integrable inf-compact function and a countable convergence-determining class yield unique weak right and left limits from a dense set of times.  Thus \(\{\Pi_t\}_{t\le T}\) has an adapted weakly c\`adl\`ag, hence optional, version, and outside one common evanescent set,
\[
    \Pi_t\!\left(
    \|\xi\|_T^2+\sup_{s\le T}M_2(\zeta_s)\right)<\infty
\]
The Portmanteau theorem applied to the conditioned inf-compact functional gives this bound.  Hence \(m^Y\) is \(\mathcal P_2(\mathbb R^{d_x})\)-valued, and at each fixed time it agrees almost surely with every regular conditional version on bounded continuous tests.

\subsection{Weak KS Representation of the Conditional Mean Field Law}

For \((\xi,\zeta)\in\mathsf E\), abbreviate {\small
\[
\begin{aligned}
    b_t^*(\xi,\zeta)
    &:=b(t,\xi,y_{\cdot\wedge t},u_t^*,\zeta_t),
    &h_t^*(\xi,\zeta)
    &:=h(t,\xi,y_{\cdot\wedge t},u_t^*,\zeta_t),\\
    \Gamma_t(\xi)
    &:=\sigma_1(t,\xi,y_{\cdot\wedge t})
      \sigma_1(t,\xi,y_{\cdot\wedge t})^\top,
    &a_t
    &:=\sigma_2(t,y_{\cdot\wedge t})
      \sigma_2(t,y_{\cdot\wedge t})^\top .
\end{aligned}
\]
}
By Assumptions (H0)--(H2), \(a\) is predictable and uniformly positive definite; \(a_t^{-1/2}\) denotes its symmetric positive definite inverse square root.  For \(\varphi\in C_b^2(\mathbb R^{d_x})\), define
\begin{equation}
    [\mathcal A_t^{u^*}\varphi](\xi,\zeta)
    :=b_t^*(\xi,\zeta)\cdot\nabla\varphi(\xi(t))
      +\frac12\operatorname{Tr}
       [\Gamma_t(\xi)\nabla^2\varphi(\xi(t))].
    \label{eq:path_dependent_generator}
\end{equation}

\begin{lemma}
\label{lem:joint_posterior_innovation}
Under the assumptions of \cref{thm:fixed_P_existence}, \(\Pi\) admits an optional version.  Let
\[
 f:[0,T]\times C([0,T];\mathbb R^{d_y})\times\mathsf E\to\mathbb R^k
\]
be jointly Borel and nonanticipative, and put \(f_t(z):=f(t,y_{\cdot\wedge t},z)\).  If
\[
 \mathbb E^{\mathbb P^*}\int_0^T|f_t(Z)|dt<\infty,
\]
then \(\Pi_t(f_t)\) has a predictable representative, defined \(dt\otimes d\mathbb P^*\)-almost everywhere.  With \(\widehat h_t:=\Pi_t(h_t^*)\),
\begin{equation}
    \nu_t:=\int_0^t a_s^{-1/2}(dy_s-\widehat h_sds)
    \label{eq:innovation_process}
\end{equation}
is an \(\mathbb F^Y\)-Brownian motion under \(\mathbb P^*\), and \(\mathbb F^Y\) has the predictable representation property with respect to \(\nu\).
\end{lemma}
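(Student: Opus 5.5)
The proof has three parts: an optional version of $\Pi$, predictable projections $\Pi_t(f_t)$, and the innovation statements.

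\emph{Optional version of $\Pi$.} By \cref{lem:fixed_P_controlled_moment} (with $p_0\ge4$), the functional $G(\xi,\zeta):=\|\xi\|_T^2+\sup_{s\le T}M_2(\zeta_s)$ is $\mathbb P^*$-integrable. I would replace $G$ by an inf-compact integrable functional $\widetilde G\ge G$, for example by adding a Hölder-type modulus term controlled via Kolmogorov's criterion under $\mathbb P^*$, together with a uniform-integrability gauge on $\eta$. Fix a countable convergence-determining class $\{\Phi_k\}\subset C_b(\mathsf E)$ and a dense countable set $D\subset[0,T]$. For each $k$, the process $\mathbb E^{\mathbb P^*}[\Phi_k(Z)\mid\mathcal F_t^Y]$ is a bounded $\mathbb F^Y$-martingale and so has a càdlàg version. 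The same is true of $\mathbb E[\widetilde G(Z)\mid\mathcal F_t^Y]$, which gives tightness of $\{\Pi_t\}_{t\in D}$ outside one null set. Tightness and the convergence-determining property then give unique weak right and left limits along $D$. Defining $\Pi_t$ as the right limit produces a weakly càdlàg, adapted, hence optional version, and the Portmanteau theorem gives $\Pi_t(G)<\infty$ off an evanescent set.

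\emph{Predictable representative of $\Pi_t(f_t)$.} For bounded $f$ that are products of a bounded Borel function of $(t,y_{\cdot\wedge t})$ and a bounded continuous function of $z$ depending on coordinates up to $t$, the process $\Pi_{t-}(f_t)$ (left limit) is predictable. I would check that it agrees with $\Pi_t(f_t)$ $dt\otimes d\mathbb P^*$-a.e., since the jumps of $\Pi$ occur only at countably many times for each test function. A monotone class argument extends this to bounded jointly Borel nonanticipative $f$, identifying $\Pi_t(f_t)$ with the predictable projection of $f_t(Z)$ modulo $dt\otimes d\mathbb P^*$-null sets. Truncating $f$ and using $L^1(dt\otimes d\mathbb P^*)$ convergence then handles integrable $f$.

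\emph{Innovation.} Under $\mathbb P^*$ we have $dy_t=h_t^*(Z)dt+\sigma_2\,dB_t^*$, where $B^*$ is the $y$-component of $V^{P,\phi^P}$. The bound $|h^*|\le C(1+\|X\|_t+\mathcal W_2(\eta_t,\delta_0))$ from (H3) and \cref{lem:fixed_P_controlled_moment} give square integrability. Then
\[
\nu_t=\int_0^ta_s^{-1/2}\sigma_2\,dB_s^*+\int_0^ta_s^{-1/2}\bigl(h_s^*(Z)-\widehat h_s\bigr)ds .
\]
The standard argument, conditioning on $\mathcal F_s^Y$ and using that $a^{-1/2}$ and $\widehat h$ are $\mathbb F^Y$-predictable and that $\widehat h_r=\mathbb E[h_r^*\mid\mathcal F_r^Y]$, shows $\nu$ is a continuous $\mathbb F^Y$-martingale. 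Its quadratic variation is $\int a^{-1/2}\sigma_2\sigma_2^\top a^{-1/2}ds=tI$, since $\sigma_2$ depends only on $y$, so Lévy's characterization makes $\nu$ an $\mathbb F^Y$-Brownian motion.

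\emph{Predictable representation property (main obstacle).} The classical Fujisaki--Kallianpur--Kunita proof needs an exponential martingale in the observation alone, so I would pass to a reference measure. Under $\mathbb P_0^P$, $y$ solves $dy=\sigma_2(t,y)dB^P$, and by (H2) the filtration $\mathbb F^Y$ equals the filtration of $\beta_t:=\int_0^t\sigma_2^{-1}dy$, a $\mathbb P_0^P$-Brownian motion. Pathwise uniqueness with Lipschitz $\sigma_2$ makes $y$ a strong functional of $\beta$ and conversely. So $\mathbb F^Y$-martingales under $\mathbb P_0^P$ have the PRP with respect to $\beta$. The restriction of $d\mathbb P^*/d\mathbb P_0^P$ to $\mathcal F_t^Y$ is $\mathcal E(\int \widehat{\theta}^{\,y\top} d\beta)$, where $\widehat\theta^{\,y}$ is the projection of $\sigma_2^{-1}h^*$. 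Girsanov's theorem then turns $\beta$ into $\int a^{-1/2}$-rotated $\nu$ up to the orthogonal factor $a^{-1/2}\sigma_2$. For an $(\mathbb F^Y,\mathbb P^*)$ square-integrable martingale $M$, I would write $M\cdot D$ under $\mathbb P_0^P$ via the PRP for $\beta$ and convert using Itô's product rule, giving $M=M_0+\int \gamma^\top d\nu$. The delicate points are the integrability of the density on $\mathcal F^Y$, using (EP) and the $p_\Lambda$-moment, which I would handle by localization, and the identification of filtrations, which needs $\sigma_2$ to depend only on $y$.
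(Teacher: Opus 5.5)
Your proposal is correct and follows essentially the same route as the paper. It builds the optional weakly c\`adl\`ag kernel from Ji\v{r}ina's theorem, c\`adl\`ag regularization on a countable convergence-determining class, and conditional tightness from an integrable inf-compact gauge. It then obtains the predictable representation property under \(\mathbb P_0^P\) from the generating Brownian motion \(\int\sigma_2^{-1}dy\), the projected likelihood \(\overline\Lambda\) with integrand \(\sigma_2^{-1}\widehat h\), Girsanov, the quotient \(M=(\overline\Lambda M)/\overline\Lambda\), and the orthogonal rotation \(a^{-1/2}\sigma_2\).

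The differences are minor. You prove the Brownian property of \(\nu\) directly by L\'evy's characterization, whereas the paper reads it off from the same Girsanov step. You get predictable versions via left limits and a monotone class argument, whereas the paper uses the equality of the optional and predictable \(dt\otimes d\mathbb P^*\)-completions. Finally, for the \(\eta\)-coordinate your example gauge also needs a time-equicontinuity part, since a uniform-integrability bound alone does not give it. Such a gauge exists because the single law of \(\eta\) on the Polish space \(\mathbf M_T\) is tight.
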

\begin{proof}
See Appendix \ref{app:proof_joint_posterior_innovation}.
\end{proof}

For \(\varphi\in C_b^2(\mathbb R^{d_x})\), It\^o's formula, conditional Fubini, and optional projection make
\[
    M_t^\varphi:=m_t^Y(\varphi)-m_0^Y(\varphi)
    -\int_0^t\Pi_s(\mathcal A_s^{u^*}\varphi)ds
\]
square-integrable and \(\mathbb F^Y\)-martingale.  By \cref{lem:joint_posterior_innovation}, \(dM_t^\varphi=K_t^\varphi d\nu_t\) for a predictable row-vector process \(K^\varphi\).  Set
\begin{equation*}
    \mathcal C_t^\varphi
    :=\Pi_t\!\left((\varphi\circ e_t)(h_t^*)^\top\right)
      -m_t^Y(\varphi)\Pi_t((h_t^*)^\top).
\end{equation*}
Projecting It\^o's formula for \(\varphi(x_t^*)y_t^\top\) and comparing it with the product formula for \(m_t^Y(\varphi)y_t^\top\) gives \(K_t^\varphi a_t^{1/2}=\mathcal C_t^\varphi\) \(dt\otimes d\mathbb P^*\)-almost everywhere.  Independence of \(W^{u^*}\) and \(B^{u^*}\) removes the signal--observation covariation, and \(p_0\ge4\) justifies the projections.  Therefore
\begin{equation}
\begin{aligned}
    dm_t^Y(\varphi)
    &=\Pi_t(\mathcal A_t^{u^*}\varphi)dt
      +\mathcal C_t^\varphi a_t^{-1/2}d\nu_t\\
    &=\Pi_t(\mathcal A_t^{u^*}\varphi)dt
      +\mathcal C_t^\varphi a_t^{-1}
      \big(dy_t-\Pi_t(h_t^*)dt\big).
\end{aligned}
    \label{eq:path_dependent_KS}
\end{equation}
For each fixed \(\varphi\), the integral form of \eqref{eq:path_dependent_KS} holds up to indistinguishability.  It need not close in \(m_t^Y\) alone, since \(\Pi_t\) retains the hidden history and the random mean field term; no \(\mathcal W_2\)-path continuity is asserted.

\section{Existence of WPR-MFG Equilibria}
\label{sec:equilibrium}

This section links the fixed-\(P\) best response of Section 3 to the conditional mean field law of Section 4.  In common noise MFGs, a random mean field term adapted to the increasing common-information filtration may enter the control; see \cite{carmona2016common}.  Under WPR, sampling \(\mu\) and then applying \(u_t^{*,\mu}=\phi^\mu(t,y_t)\) generally produces a \(\sigma(\mu,y_t)\)-measurable control.  It is not, in general, \(\sigma(y_t)\)-measurable.

To preserve WPR, we parameterize the feedback by a deterministic joint law \(P\) and seek a compatible conditional McKean--Vlasov response whose mean field coordinate both enters the state equation and equals the conditional state law.  Sections 3--4 supply the selected feedback and conditional-law update; their recursive closure defines the law-level response used below.

For \(r\ge1\), set \(M_r(\eta):=\int_{\mathbb R^{d_x}}|x|^r\eta(dx)\). Let \(d_{\rm w}\) be a bounded complete metric for weak convergence on \(\mathcal P(\mathbb R^{d_x})\).  Equip \(\overline{\mathcal Z}:=C([0,T];\mathcal P(\mathbb R^{d_x}))\) with \(\rho_{\mathcal Z}(\mu,\nu):=\sup_{t\le T}d_{\rm w}(\mu_t,\nu_t)\), and set
\[
 \mathcal Z:=\big\{\mu\in\overline{\mathcal Z}:
 \mu_t\in\mathcal P_2(\mathbb R^{d_x})\text{ for every }t\big\}.
\]
On sets with a uniform \(p_0\)-moment bound for some \(p_0>2\), \(\rho_{\mathcal Z}\)-convergence implies uniform-in-time \(\mathcal W_2\)-convergence.

Set {\small
\begin{equation}
 \mathcal X:=C([0,T];\mathbb R^{d_x+d_y}),\quad
 \mathcal V:=C([0,T];\mathbb R^{d_w+d_b}),\quad
 \Omega^{\mathrm c}:=\mathcal X\times\mathcal V\times\overline{\mathcal Z}.
 \label{eq:canonical_joint_space}
\end{equation}
}
The canonical coordinates are denoted by \((X,V,\mu)\), where \(X=(x,y)\) and \(V=(W,B)\).  For joint laws with finite second moments, let {\small
\begin{equation}
 \mathbf d_t^2(P,P'):=\inf_{\Gamma\in\Pi(P,P')}\int
 \left(\|X-X'\|_t^2+\|V-V'\|_t^2+
 \sup_{r\le t}\mathcal W_2^2(\mu_r,\mu_r')\right)d\Gamma .
 \label{eq:joint_law_distance}
\end{equation}
}

\begin{definition}[Compatible joint laws and compatible inputs]
\label{def:compatible_joint_laws}
Let \(\lambda_0\) be the law of \(X_0\), and let \(\mathbb W_V\) be Wiener measure on \(\mathcal V\).  For \(t\in[0,T]\), set
\[
 \mathcal H_t:=\sigma(X_s,V_s,\mu_s:0\le s\le t),\qquad
 \mathcal F_t^{X_0,V}:=\sigma(X_0,V_s:0\le s\le t).
\]
A joint law \(P\in\mathcal P(\Omega^{\mathrm c})\) of \((X,V,\mu)\) is called compatible if \(P\circ(X_0,V)^{-1}=\lambda_0\otimes\mathbb W_V\) and, for every bounded Borel function \(H\) of \((X_0,V)\) and every \(t\in[0,T]\),
\[
 \mathbb E^P[H(X_0,V)\mid\mathcal H_t]
 =\mathbb E^P[H(X_0,V)\mid\mathcal F_t^{X_0,V}].
\]
The collection of such laws is denoted by \(\mathfrak K\).  Given an input flow \(\eta\), a stochastic basis carrying \((X_0,V,\eta)\) is called compatible with \(\eta\) if the same identity holds with \(\sigma(X_0,V_s,\eta_s:0\le s\le t)\) in place of \(\mathcal H_t\).
\end{definition}

Fix
\[
 p_0\ge4,\qquad 2<p_{\rm tm}\le p_0,\qquad
 \alpha_{\rm tm}:=\frac{p_{\rm tm}}2-1>0,
\]
and write
\begin{equation}
 \mathcal M_{p_0}(P):=\mathbb E^P\!\left[
 \|X\|_T^{p_0}+\sup_{t\le T}M_{p_0}(\mu_t)\right].
 \label{eq:joint_moment_radius}
\end{equation}
The distance \(\mathbf d_T\) remains quadratic.  The exponent \(p_0\) supplies uniform integrability, while \(p_{\rm tm}\) is used only for path tightness. For \(P\in\mathfrak K\), the \(P\)-environment control problem is the problem studied in Section 3.  On a compatible weak basis, the joint law of \((X_0,V,\eta)\) is \(P\circ(X_0,V,\mu)^{-1}\), while the strategy retains the WPR form \(u_t=\phi(t,y_t)\).  Theorem \ref{thm:fixed_P_existence} supplies a selected optimal controlled probability measure, denoted by \(\mathbb P^{P,*}:=\mathbb P^{P,\phi^P}\), and a deterministic feedback \(\phi^P\in\Phi_{ad}\).  Let \(q^P\) be the \(V\)-integrand in the associated adjoint equation.  Set
\begin{equation}
\begin{split}
 \widehat{\mathcal H}^{P}(t,y,u)
 &:=\mathbb E^{\mathbb P^{P,*}}\!\left[
 \mathcal H(t,X^P,u,\eta,q_t^P)\mid y_t^P=y\right],\\
 \phi^P(t,y)&:=\arg\min_{u\in\mathbb U}
 \widehat{\mathcal H}^{P}(t,y,u).
\end{split}
 \label{eq:law_level_selector}
\end{equation}

A compatible \(P\)-response is a law \(Q\in\mathfrak K\) with \(\mathcal M_{p_0}(Q)<\infty\) such that, under \(Q\),
\begin{equation}
 dX_t=F\big(t,X,\phi^P(t,y_t),\mu_t\big)dt+\Sigma(t,X)dV_t
 \label{eq:joint_law_response_system}
\end{equation}
and
\begin{equation}
 \mu_t=Q(x_t\in\cdot\mid\mathcal F_t^Y),
 \qquad 0\le t\le T,\quad Q\text{-a.s.}
 \label{eq:response_conditional_consistency}
\end{equation}
The latter identity is understood through bounded continuous state tests and bounded \(\mathcal F_t^Y\)-measurable tests.  The construction below gives a unique law in the compatible class covered by Assumption (H7)(ii).  Denote it by \(Q^P\) and define
\begin{equation}
 \mathcal R(P):=Q^P.
 \label{eq:joint_response_map}
\end{equation}

\begin{definition}[WPR-MFG equilibria]
A weak WPR-MFG equilibrium is a fixed point \(P^*\in\mathfrak K\) of the joint law response for which \(\mathcal R(P^*)\) is defined and
\begin{equation}
 \mathcal R(P^*)=P^*.
 \label{eq:joint_law_fixed_point}
\end{equation}
Under \(P^*\), the canonical coordinates satisfy
\begin{equation}
 dX_t=F\big(t,X,\phi^{P^*}(t,y_t),\mu_t\big)dt+\Sigma(t,X)dV_t,
 \qquad
 \mu_t=P^*(x_t\in\cdot\mid\mathcal F_t^Y).
 \label{eq:weak_equilibrium_consistency}
\end{equation}
The equilibrium control is \(u_t^*=\phi^{P^*}(t,y_t)\).

A strong realization of a WPR-MFG equilibrium on a compatible stochastic basis with controlled probability measure \(\mathbb P^*\) is a pair \((X^*,\mu^*)\) such that, with \(P^*:=\mathbb P^*\circ(X^*,V,\mu^*)^{-1}\), the control \(u_t^*=\phi^{P^*}(t,y_t^*)\) is optimal and \(\mathcal G_t^I\)-measurable, \(X^*\) is a strong solution of the closed-loop state equation, and
\begin{equation}
 \mu_t^*=\mathbb P^*(x_t^*\in\cdot\mid\mathcal F_t^Y),
 \qquad 0\le t\le T,\quad\mathbb P^*\text{-a.s.}
 \label{eq:strong_equilibrium_consistency}
\end{equation}
\end{definition}

To obtain a continuous law-level response from the fixed-\(P\) optimizer, we need estimates uniform in both the law parameter and the conditional mean field input.

\textbf{Assumption (H7) [Law-level stability and regularity]:}

Throughout, \(P\in\mathfrak K\), \(\mathcal M_{p_0}(P)<\infty\), and the input flow \(\eta\) has continuous \(\mathcal P_2(\mathbb R^{d_x})\)-valued paths, is compatible with \((X_0,V)\), and satisfies \(\mathbb E[\sup_{t\le T}M_{p_0}(\eta_t)]<\infty\). The assumptions of Section 3 hold uniformly over this class of compatible \(P\)-environments.  Hence \cref{thm:fixed_P_existence,thm:lipschitz} apply with constants independent of \(P\) and \(\eta\).  This uniformity includes the likelihood estimate used in Assumption (H5): for every compatible reference realization associated with \((P,\eta,\phi)\), write \(\mathbb P_0^{P,\eta}\) and \(\Lambda^{P,\eta,\phi}\) for its reference law and likelihood process.  There exist \(p_\Lambda>1\) and \(C_\Lambda<\infty\), independent of \((P,\eta,\phi)\), such that
\begin{equation}
 \mathbb E^{\mathbb P_0^{P,\eta}}\!\left[
 (\Lambda_T^{P,\eta,\phi})^{p_\Lambda}\right]\le C_\Lambda.
 \label{eq:H7_uniform_likelihood}
\end{equation}

\((\mathrm{i})\) The conditional Hamiltonian gradients associated with the selected optimal responses in \cref{thm:fixed_P_existence} admit versions, denoted by
\[
 \Psi^P(t,y,u):=\nabla_u\widehat{\mathcal H}^{P}(t,y,u)
\]
such that \((P,t,y,u)\mapsto\Psi^P(t,y,u)\) is jointly Borel.  There are \(\beta\in(0,1]\) and \(L_H>0\) such that
\begin{equation}
\begin{split}
 |\Psi^P(t,y,u)-\Psi^{P'}(s,y',u)|
 \le L_H\!\left(
 |t-s|^\beta+|y-y'|
 +(1+|y|+|y'|)\mathbf d_T(P,P')\right).
\end{split}
 \label{eq:H7_hamiltonian_stability}
\end{equation}

\((\mathrm{ii})\) For fixed \(P\) and \(\eta\), denote the controlled state law by \(\mathbb Q^{P,\eta}\), and set \(m_t^{P,\eta}:=\mathbb Q^{P,\eta} (x_t^{P,\eta}\in\cdot\mid\mathcal F_t^{y^{P,\eta}})\).  We choose a continuous version of \(m^{P,\eta}\) that is adapted to \(\{\mathcal F_t^{y^{P,\eta}}\}_{t\le T}\) and representable as a nonanticipative Borel functional of \(y_{\cdot\wedge t}^{P,\eta}\).  There is \(L_{\rm fil}>0\), independent of the law parameters and of the step in the recursive construction, such that every pair of these systems constructed on the same compatible basis with the same \((X_0,V)\) satisfies
\begin{equation}
 \mathbb E\!\left[\sup_{s\le t}
 \mathcal W_2^2(m_s^1,m_s^2)\right]
 \le L_{\rm fil}\mathbb E\!\left[
 \sup_{s\le t}|y_s^1-y_s^2|^2\right],
 \qquad t\in[0,T].
 \label{eq:H7_input_filter_stability}
\end{equation}
The admissible comparisons include adjacent approximations, an approximation and its limit, finite-step systems with different \(P\), and any jointly compatible pair of self-consistent responses used in the uniqueness argument. The same bound is required after the compatible couplings invoked below.

\((\mathrm{iii})\) There is \(C_{\rm tm}>0\) such that every conditional-law output used in the recursive construction, including its limit, satisfies
\begin{equation}
 \mathbb E\!\left[
 \mathcal W_2^{p_{\rm tm}}(m_t^{P,\eta},m_s^{P,\eta})\right]
 \le C_{\rm tm}|t-s|^{1+\alpha_{\rm tm}},
 \qquad s,t\in[0,T].
 \label{eq:H7_filter_time_modulus}
\end{equation}

With \(q_\Lambda=p_\Lambda/(p_\Lambda-1)\), conditional Jensen, Doob's inequality, and \eqref{eq:H7_uniform_likelihood} yield the uniform bound
\begin{equation}
 \sup_{P,\eta}\mathbb E^{\mathbb Q^{P,\eta}}\!\left[
 \sup_{t\le T}M_{p_0}(m_t^{P,\eta})\right]
 \le C_{\rm cond}<\infty.
 \label{eq:derived_conditional_moment_bound}
\end{equation}
The calculation is included in Appendix \ref{app:proof_joint_response_wellposed}.

Strong convexity and (H7)(i) give the selector estimate
\begin{equation}
\begin{split}
 |\phi^P(t,y)-\phi^{P'}(s,y')|
 \le \frac{L_H}{\lambda}\!\left(
 |t-s|^\beta+|y-y'|
 +(1+|y|+|y'|)\mathbf d_T(P,P')\right).
\end{split}
 \label{eq:H7_selector_stability}
\end{equation}
This is the only place where the selected optimal response enters the fixed-point argument.

For the construction below, let \(K_1,K_2<\infty\) initially be arbitrary radii satisfying
\begin{equation}
 K_1\ge\mathbb E|X_0|^{p_0},\qquad K_2>0,
 \label{eq:candidate_radii}
\end{equation}
and define
\begin{equation}
\label{eq:W_definition}
 \mathfrak W:=
 \left\{
 \begin{array}{@{}l@{}}
 P\in\mathfrak K:\ \mathcal M_{p_0}(P)\le K_1,\\[0.5em]
 \displaystyle
 \mathbb E^P\!\left[
 |X_t-X_s|^{p_{\rm tm}}
 +\mathcal W_2^{p_{\rm tm}}(\mu_t,\mu_s)\right]
 \le K_2|t-s|^{1+\alpha_{\rm tm}},\quad s,t\in[0,T]
 \end{array}
 \right\}.
\end{equation}

\begin{lemma}
\label{lem:closed_loop_input_stability}
Under (H0)--(H3) and the strategy-class part of (H5), fix \(\phi\in\Phi_{ad}\).  Let \(\eta\) be a compatible random input with values in \(C([0,T];\mathcal P_2(\mathbb R^{d_x}))\), and suppose that \(\mathbb E[\sup_{t\le T}M_2(\eta_t)]<\infty\).  Then the closed-loop state equation with feedback \(\phi(t,y_t)\) has a pathwise unique strong solution \(X^{\phi,\eta}\).  Write
\[
 \mathcal E_{\rm in}:=\mathbb R^{d_x+d_y}\times\mathcal V\times
 C([0,T];\mathcal P_2(\mathbb R^{d_x})),
 \qquad \gamma:=\mathcal L(X_0,V,\eta).
\]
There is a Borel map \(S_{\phi,\gamma}:\mathcal E_{\rm in}\to\mathcal X\), nonanticipative on a Borel set of full \(\gamma\)-measure, such that every compatible realization with environment law \(\gamma\) satisfies
\[
 X^{\phi,\eta}=S_{\phi,\gamma}(X_0,V,\eta)\qquad\text{a.s.}
\]
Consequently, the law \(\mathcal L(X^{\phi,\eta},X_0,V,\eta)\) is unique for fixed \((\phi,\gamma)\).  If \(\eta,\eta'\) are placed on a jointly compatible basis with the same \((X_0,V)\), meaning that the compatibility identity remains valid after both input histories are adjoined, then
\begin{equation}
 \mathbb E\!\left[
 \|X^{\phi,\eta}-X^{\phi,\eta'}\|_t^2\right]
 \le C_{\rm st}\int_0^t
 \mathbb E\!\left[\sup_{r\le s}
 \mathcal W_2^2(\eta_r,\eta_r')\right]ds,
 \label{eq:closed_loop_input_stability}
\end{equation}
where \(C_{\rm st}\) is independent of \(\phi\in\Phi_{ad}\).  For the selected feedback, we write \(X^{P,\eta}:=X^{\phi^P,\eta}\); the estimate is therefore uniform over \(P\in\mathfrak W\).
\end{lemma}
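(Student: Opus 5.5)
The plan is to treat the closed-loop equation
\[
 dX_t=F\big(t,X,\phi(t,y_t),\eta_t\big)dt+\Sigma(t,X)dV_t,\qquad X_0 \text{ given},
\]
as a path-dependent SDE with random, adapted coefficients. By (H3) and (H5), the drift \((t,X)\mapsto F_0(t,X,\eta_t)+F_1(t,X,\eta_t)\phi(t,y_t)\) is Lipschitz in the path \(X\) uniformly in \(\omega\). Indeed, since \(\phi\) is \(L_\Phi\)-Lipschitz and bounded by compactness of \(\mathbb U\), and \(|F_1|\le C_{F_1}\),
\[
 |F(t,X,\phi(t,y_t),\eta_t)-F(t,X',\phi(t,y'_t),\eta_t)|\le \big(L_{F_0}+L_{F_1}\sup_{\mathbb U}|u|+C_{F_1}L_\Phi\big)\|X-X'\|_t .
\]
Moreover, \(\Sigma\) is Lipschitz by (H2), and the linear growth bound involves only \(\mathcal W_2(\eta_t,\delta_0)\), which is square integrable uniformly in time by hypothesis. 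A Picard iteration on the compatible basis, with the filtration generated by \((X_0,V,\eta)\), therefore gives existence and pathwise uniqueness of a strong solution. Burkholder--Davis--Gundy and Gronwall give \(\mathbb E\|X\|_T^2<\infty\). The key point is that compatibility makes \(V\) a Brownian motion in the filtration \(\sigma(X_0,V_s,\eta_s:s\le t)\), so the stochastic integrals and the martingale estimates are legitimate.

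For the Borel map \(S_{\phi,\gamma}\), I would follow the compatible Yamada--Watanabe scheme in the style of Carmona, Delarue, and Lacker. Each Picard iterate \(X^{(n)}\) is an explicit nonanticipative Borel functional of \((X_0,V,\eta)\). The stochastic integrals are defined as limits of Riemann sums along a subsequence, with the subsequence chosen depending only on \(\gamma\). The iterates converge a.s. along a \(\gamma\)-determined subsequence, and I would define \(S_{\phi,\gamma}\) as the limsup of these functionals on the Borel set where the limit exists. Nonanticipativity holds on that set because each iterate restricted to \([0,t]\) depends only on the inputs up to \(t\). Any compatible realization with environment law \(\gamma\) reproduces the same a.s. convergence, since the convergence is a statement about \(\gamma\) alone, so pathwise uniqueness gives \(X^{\phi,\eta}=S_{\phi,\gamma}(X_0,V,\eta)\) a.s. Uniqueness of the joint law follows at once. \textbf{This measurable-selection step is the main obstacle.} The delicate part is ensuring that the stochastic integral functional is defined consistently across different compatible bases. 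Compatibility is exactly what ensures that the Itô integral against \(V\) computed in the larger filtration coincides with the one in the canonical filtration. I would first try to avoid general Yamada--Watanabe machinery by using the Riemann-sum subsequence construction directly.

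For the stability estimate, place \(\eta,\eta'\) on a jointly compatible basis, so \(V\) remains Brownian for the filtration that includes both input histories. Set \(\Delta=X^{\phi,\eta}-X^{\phi,\eta'}\). The drift difference is bounded by the path-Lipschitz constant above times \(\|\Delta\|_s\), plus the measure term \((L_{F_0}+L_{F_1}\sup_{\mathbb U}|u|)\mathcal W_2(\eta_s,\eta'_s)\). The diffusion difference is at most \(L_\Sigma\|\Delta\|_s\). Applying BDG with Cauchy--Schwarz gives
\[
 \mathbb E\|\Delta\|_t^2\le C\int_0^t\mathbb E\|\Delta\|_s^2ds+C\int_0^t\mathbb E\sup_{r\le s}\mathcal W_2^2(\eta_r,\eta'_r)ds,
\]
and Gronwall yields \eqref{eq:closed_loop_input_stability}. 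The constant depends only on \(T\), \(L_{F_0}\), \(L_{F_1}\), \(C_{F_1}\), \(L_\Sigma\), \(L_\Phi\) and \(\sup_{\mathbb U}|u|\). It is therefore uniform over \(\phi\in\Phi_{ad}\), and in particular uniform over the selected feedbacks \(\phi^P\) for \(P\in\mathfrak W\).
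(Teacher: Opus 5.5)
Your proposal is correct and follows the paper's proof. Both use successive approximation on the compatible basis, with compatibility keeping $V$ Brownian in the filtration generated by $(X_0,V,\eta)$; a $\gamma$-determined Borel nonanticipative limit identified with the solution by pathwise uniqueness; and BDG--Gronwall for the stability estimate, with a constant depending only on $L_\Phi$ and the coefficient bounds. The one difference is in how $S_{\phi,\gamma}$ is built. The paper uses frozen-coefficient Euler maps on nested partitions, which are explicit Borel nonanticipative functionals of $(X_0,V,\eta)$ and so avoid the Riemann-sum definition of the stochastic integral at each Picard level that you identify as the main obstacle. Your Picard route also works, provided you establish nonanticipativity on the intersection of the countably many full-measure convergence sets.
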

\begin{proof}
See Appendix \ref{app:proof_closed_loop_input_stability}.
\end{proof}

\begin{lemma}
\label{lem:compatible_law_closed}
The class \(\mathfrak K\) is convex and closed under weak convergence.
\end{lemma}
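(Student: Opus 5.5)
We treat the two requirements in the definition of \(\mathfrak K\) separately. The marginal condition \(P\circ(X_0,V)^{-1}=\lambda_0\otimes\mathbb W_V\) is linear in \(P\) and is preserved by weak limits, since \((X_0,V)\) is a continuous function on \(\Omega^{\mathrm c}\). The compatibility identity needs more work, so we first restate it in a form that is linear in \(P\) and involves only bounded continuous test functions.

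\textbf{Reformulation of compatibility.} Under the marginal condition, compatibility is equivalent to the following: for every \(t\), conditionally on \(\mathcal F_t^{X_0,V}\), the increments \(V_{t+\cdot}-V_t\) are independent of \(\mathcal H_t\). Since \(\mathcal F_t^{X_0,V}\subseteq\mathcal H_t\), and \(V_{t+\cdot}-V_t\) is independent of \(\mathcal F_t^{X_0,V}\) under \(\lambda_0\otimes\mathbb W_V\), this reduces to independence of \(V_{t+\cdot}-V_t\) from \(\mathcal H_t\) under \(P\). To see the equivalence, factor any \(H(X_0,V)\) through \(\mathcal F_t^{X_0,V}\)-measurable data and the future increments, and apply a monotone class argument. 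In test-function form, the condition reads
\[
\mathbb E^P\big[G(\mathsf S_t(X,V,\mu))\,K(V_{t+\cdot}-V_t)\big]=\mathbb E^P\big[G(\mathsf S_t(X,V,\mu))\big]\,\mathbb E^{\mathbb W_V}[K]
\]
for all bounded continuous \(G\) on \(\Omega^{\mathrm c}\) and \(K\) on path space. Because weak convergence on \(\mathcal P(\mathbb R^{d_x})\) is metrizable by \(d_{\rm w}\), \(\overline{\mathcal Z}\) is Polish and bounded continuous functions are measure-determining. So this family of identities characterizes the condition.

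\textbf{Convexity.} For fixed \(G\) and \(K\), both sides of the identity are affine in \(P\); the right side is affine because \(\mathbb E^{\mathbb W_V}[K]\) is a constant. Hence a convex combination of laws in \(\mathfrak K\) satisfies every identity, and it also satisfies the marginal condition.

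\textbf{Closedness.} Suppose \(P_n\to P\) weakly. The map \(\omega\mapsto G(\mathsf S_t\omega)K(V_{t+\cdot}-V_t)\) is bounded and continuous, because \(\mathsf S_t\) and the increment map are continuous on \(\Omega^{\mathrm c}\). Therefore both sides of the identity pass to the limit, and \(P\in\mathfrak K\).

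\textbf{Main obstacle.} The delicate step is the equivalence between the conditional-expectation definition and the independence form. One direction uses \(\mathbb E^P[H\mid\mathcal H_t]=\mathbb E^P[H\mid\mathcal F_t^{X_0,V}]\) with \(H=K(V_{t+\cdot}-V_t)\). Since this increment is independent of \(\mathcal F_t^{X_0,V}\), the conditional expectation equals the constant \(\mathbb E^{\mathbb W_V}[K]\), which gives the identity. For the converse, we take \(H=H_1(X_0,\mathsf S_tV)\,H_2(V_{t+\cdot}-V_t)\). Independence then gives \(\mathbb E^P[H\mid\mathcal H_t]=H_1\,\mathbb E[H_2]=\mathbb E^P[H\mid\mathcal F_t^{X_0,V}]\), and a monotone class argument extends this to all bounded Borel \(H\). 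All of this is done at each fixed \(t\), so no right-continuity of filtrations is needed.
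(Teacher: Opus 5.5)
Your proof is correct, but it encodes compatibility differently from the paper. The paper tests the defining conditional-expectation identity directly. For bounded continuous cylinder functions \(h\) of \((X_0,V)\) it introduces the reference kernel \(K_th=\mathbb E^{\lambda_0\otimes\mathbb W_V}[h\mid\mathcal F_t^{X_0,V}]\), which the Wiener transition kernel makes bounded and continuous. It then uses the linear identities \(\mathbb E^P[G\{h-K_th\}]=0\) over bounded continuous \(\mathcal H_t\)-cylinder functions \(G\), and closes with a monotone-class argument.

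You instead first show that, given the marginal condition, compatibility is equivalent to independence of the future increments \(V_{t+\cdot}-V_t\) from \(\mathcal H_t\), i.e.\ that \(V\) is Brownian in the joint filtration. You then test with products \(G(\mathsf S_t\omega)K(V_{t+\cdot}-V_t)\) against the constant \(\mathbb E^{\mathbb W_V}[K]\).

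Your route gives a manifestly continuous test functional with a constant right-hand side, so no continuity of a conditional kernel needs checking. It also matches the standard Carmona--Delarue--Lacker/Kurtz characterization of compatibility. The cost is the extra equivalence step, which you prove correctly: one direction takes \(H=K(V_{t+\cdot}-V_t)\), the other takes \(H=H_1(X_0,\mathsf S_tV)H_2(V_{t+\cdot}-V_t)\) followed by a monotone-class argument. That step relies on the independent-increment structure of the Wiener marginal.

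The paper's kernel formulation stays closer to the definition. It would carry over to any reference input law whose conditional kernel given the past is weakly continuous, without recasting compatibility as an independence statement.
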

\begin{proof}
See Appendix \ref{app:proof_compatible_law_closed}.
\end{proof}

\begin{lemma}
\label{lem:joint_response_wellposed}
Under Assumptions (H0)--(H7), every \(P\in\mathfrak W\) has a compatible \(P\)-response.  Its law is unique in the class specified in Assumption (H7)(ii), and it has a strong realization on the common compatible stochastic basis used in the construction.  Thus \(\mathcal R(P)\) is well defined, and \(u_t^P=\phi^P(t,y_t)\) is an admissible WPR strategy.
\end{lemma}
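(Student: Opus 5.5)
The plan is to build the compatible \(P\)-response by a Picard-type recursion on the conditional-law input, carried out on one fixed compatible basis. Fix \(P\in\mathfrak W\) and its selected feedback \(\phi^P\in\Phi_{ad}\), supplied by \cref{thm:fixed_P_existence,thm:lipschitz} with constants uniform over \(\mathfrak W\) by (H7). On a basis carrying \((X_0,V)\) with law \(\lambda_0\otimes\mathbb W_V\), start from a deterministic flow \(\eta^{(0)}\), for instance \(\eta_t^{(0)}:=\mathcal L(x_0)\), which is trivially compatible.

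Given a compatible input \(\eta^{(n)}\), \cref{lem:closed_loop_input_stability} gives the strong closed-loop solution
\[
X^{(n+1)}=S_{\phi^P,\gamma_n}(X_0,V,\eta^{(n)}).
\]
Set
\[
\eta^{(n+1)}_t:=m_t^{P,\eta^{(n)}}=\mathbb Q(x^{(n+1)}_t\in\cdot\mid\mathcal F_t^{y^{(n+1)}}),
\]
using the continuous version chosen in (H7)(ii). That version is a nonanticipative Borel functional of \(y^{(n+1)}_{\cdot\wedge t}\), and \(y^{(n+1)}\) is itself a nonanticipative functional of \((X_0,V)\) by induction. Hence \(\eta^{(n+1)}\) is \(\mathbb F^{X_0,V}\)-adapted, so compatibility is automatic. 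All iterates then live on the same basis, and the comparisons required in (H7)(ii) and \eqref{eq:closed_loop_input_stability} are legitimate.

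The contraction step combines the two stability estimates. From \eqref{eq:H7_input_filter_stability}, and from \eqref{eq:closed_loop_input_stability} with \(\|y^{(n+1)}-y^{(n)}\|_t\le\|X^{(n+1)}-X^{(n)}\|_t\), we obtain
\[
\delta_{n+1}(t):=\mathbb E\sup_{s\le t}\mathcal W_2^2(\eta^{(n+1)}_s,\eta^{(n)}_s)\le L_{\rm fil}C_{\rm st}\int_0^t\delta_n(s)\,ds .
\]
Iterating gives \(\delta_n(T)\le (L_{\rm fil}C_{\rm st}T)^n\delta_0(T)/n!\). Here \(\delta_0(T)<\infty\) thanks to \eqref{eq:derived_conditional_moment_bound}. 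The sequence \(\eta^{(n)}\) is therefore Cauchy in \(L^2(\Omega;\mathbf M_T)\) and \(X^{(n)}\) is Cauchy in \(L^2(\Omega;\mathcal X)\). Let \(\eta\) and \(X\) denote the limits. They remain \(\mathbb F^{X_0,V}\)-adapted, so the limit is compatible, and the moment bound \(\mathcal M_{p_0}<\infty\) passes to the limit by Fatou together with \eqref{eq:derived_conditional_moment_bound} and \cref{lem:fixed_P_controlled_moment}.

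The main obstacle is identifying the limit as a response. Two facts are needed: that \(X=S_{\phi^P,\gamma}(X_0,V,\eta)\), and the consistency \(\eta_t=\mathbb Q(x_t\in\cdot\mid\mathcal F^Y_t)\). For the first, I would pass to the limit in the SDE using the Lipschitz bounds (H3) and the Lipschitz property of \(\phi^P\). For consistency, I would apply (H7)(ii) to the pair (approximation, limit), which the assumption explicitly admits. This yields \(\mathbb E\sup_t\mathcal W_2^2(m^{P,\eta^{(n)}}_t,m^{P,\eta}_t)\to0\). Together with \(m^{P,\eta^{(n)}}=\eta^{(n+1)}\to\eta\), it gives \(\eta=m^{P,\eta}\). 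The identity then holds against bounded continuous state tests and bounded \(\mathcal F^Y_t\)-measurable tests, exactly as required in \eqref{eq:response_conditional_consistency}. Setting \(Q:=\mathcal L(X,V,\eta)\) produces a compatible \(P\)-response with a strong realization on the common basis.

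Uniqueness follows from a Gronwall argument. Take two responses in the class of (H7)(ii), placed jointly compatibly with the same \((X_0,V)\). The same two estimates give \(\delta(t)\le L_{\rm fil}C_{\rm st}\int_0^t\delta\), so \(\delta\equiv0\) and the inputs coincide. By \cref{lem:closed_loop_input_stability} the state laws then coincide, hence so do the joint laws, and \(\mathcal R(P)\) is well defined. Admissibility of \(u^P_t=\phi^P(t,y_t)\) is immediate: \(\phi^P\in\Phi_{ad}\), \(\mathbb U\) is compact so \(u^P\) is square integrable, and the likelihood bound \eqref{eq:H7_uniform_likelihood} gives (EP) for the limiting environment.
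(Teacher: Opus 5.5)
Your proposal is correct and follows essentially the paper's route. You run a Picard recursion \(\eta^{(n+1)}=m^{P,\eta^{(n)}}\) on one compatible basis, get the factorial bound by chaining \cref{lem:closed_loop_input_stability} with (H7)(ii), identify the \(L^2\) limit by applying the state and filter maps once more, and prove uniqueness by Gronwall on a jointly compatible coupling. Only minor points need tidying: the base of your iteration is \(\delta_1(T)=\mathbb E\sup_t\mathcal W_2^2(\eta^{(1)}_t,\eta^{(0)}_t)\), not \(\delta_0\); the jointly compatible coupling in the uniqueness step needs justification (the paper cites the coupling and joint-compatibility results of \cite{kurtz2014weak}); and your closing appeal to (EP) is unnecessary, since admissibility only needs \(\phi^P\in\Phi_{ad}\) and compactness of \(\mathbb U\), and (EP) would not follow from \eqref{eq:H7_uniform_likelihood} alone anyway.
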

\begin{proof}
See Appendix \ref{app:proof_joint_response_wellposed}.
\end{proof}

The response law is now well defined.  We next construct a compact domain that it preserves.

\begin{lemma}
\label{lem:W_compact}
The set \(\mathfrak W\) is nonempty, convex, and compact in \(\mathcal P(\Omega^{\mathrm c})\) endowed with the weak topology.
\end{lemma}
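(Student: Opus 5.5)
We prove the three properties separately, treating compactness as the main step.

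\emph{Nonemptiness.} We exhibit an explicit element of \(\mathfrak W\). Take the reference basis with \(X_0\sim\lambda_0\) independent of the Brownian motion \(V\). Let \(X\) solve the uncontrolled equation \eqref{eq:ref_dyn}. For \(\mu\) we take a deterministic constant flow \(\mu_t\equiv\delta_0\). A deterministic \(\mu\) is trivially compatible, since adjoining it to \(\mathcal F_t^{X_0,V}\) changes nothing. Then \(P:=\mathcal L(X,V,\mu)\in\mathfrak K\).

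The moment and increment bounds follow from \cref{thm:ref_solution} and the Burkholder--Davis--Gundy inequality with bounded \(\Sigma\). Since \(X_t-X_s=\int_s^t\Sigma\,dV\), we get \(\mathbb E|X_t-X_s|^{p_{\rm tm}}\le C|t-s|^{p_{\rm tm}/2}=C|t-s|^{1+\alpha_{\rm tm}}\), and \(\mathcal W_2(\mu_t,\mu_s)=0\). However, \(\mathcal M_{p_0}(P)\) may exceed \(K_1\) when \(K_1=\mathbb E|X_0|^{p_0}\) exactly, and \(K_2\) is arbitrary. We read \eqref{eq:candidate_radii} as allowing \(K_1,K_2\) to be enlarged later; this is presumably how they are finally fixed for invariance under \(\mathcal R\). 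So nonemptiness holds once \(K_1\ge C_{p_0}(1+\mathbb E|X_0|^{p_0})\) and \(K_2\ge C\). If the radii must stay literally arbitrary, the fallback is to rescale the diffusion term, but we expect the enlargement convention to be the intended one.

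\emph{Convexity.} By \cref{lem:compatible_law_closed}, \(\mathfrak K\) is convex. The maps \(P\mapsto\mathcal M_{p_0}(P)\) and \(P\mapsto\mathbb E^P[|X_t-X_s|^{p_{\rm tm}}+\mathcal W_2^{p_{\rm tm}}(\mu_t,\mu_s)]\) are linear in \(P\), being integrals of fixed nonnegative functionals. Hence the defining inequalities are preserved under convex combinations.

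\emph{Compactness.} We show \(\mathfrak W\) is tight and closed. Tightness splits into the three coordinates. The \(V\)-marginal is the single law \(\mathbb W_V\), hence tight. For \(X\), the bound \(\mathbb E|X_0|^{p_0}\le K_1\) together with the increment estimate gives tightness in \(\mathcal X\) by the Kolmogorov--Chentsov criterion, because \(1+\alpha_{\rm tm}>1\). The \(\mu\)-coordinate, valued in \(\overline{\mathcal Z}\), is the main obstacle; we handle it by a compact-set construction. The set \(\{\nu:M_{p_0}(\nu)\le R\}\) is compact for \(d_{\rm w}\) and, since \(p_0>2\), also for \(\mathcal W_2\). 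The Kolmogorov criterion applied to the \(\mathcal W_2\)-valued process yields a Hölder modulus: \(\mathbb E[\sup_{s\neq t}(\mathcal W_2(\mu_t,\mu_s)/|t-s|^{\gamma})^{p_{\rm tm}}]\le C(K_2)\) for some \(\gamma>0\). Because \(d_{\rm w}\) is dominated on these sets by a modulus of \(\mathcal W_2\), Arzelà--Ascoli in \(C([0,T];(\mathcal P,d_{\rm w}))\) shows that sets of paths with \(\sup_tM_{p_0}(\mu_t)\le R\) and Hölder constant at most \(R\) are relatively compact. By Markov's inequality these sets carry mass at least \(1-\varepsilon\) uniformly over \(\mathfrak W\). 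A product of tight families is tight, so Prokhorov's theorem gives relative compactness.

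For closedness, let \(P^n\to P\) weakly with \(P^n\in\mathfrak W\). Then \(P\in\mathfrak K\) by \cref{lem:compatible_law_closed}. The functionals \(\|X\|_T^{p_0}\), \(\sup_tM_{p_0}(\mu_t)\), \(|X_t-X_s|^{p_{\rm tm}}\) and \(\mathcal W_2^{p_{\rm tm}}(\mu_t,\mu_s)\) are nonnegative and lower semicontinuous on \(\Omega^{\mathrm c}\). In particular \(\nu\mapsto M_{p_0}(\nu)\) and \(\mathcal W_2\) are weakly lower semicontinuous, and suprema of lower semicontinuous functions are lower semicontinuous. By the Portmanteau theorem, \(\mathbb E^P[f]\le\liminf_n\mathbb E^{P^n}[f]\) for each such \(f\). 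Hence both defining bounds pass to the limit. The limit also has \(\mu_t\in\mathcal P_2\) for every \(t\): \(\sup_tM_{p_0}(\mu_t)<\infty\) holds \(P\)-a.s., and we modify \(P\) on a null set, or restrict to a full-measure subset of \(\overline{\mathcal Z}\), so that the limit lies in \(\mathcal Z\).

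Therefore \(\mathfrak W\) is a closed, relatively compact subset of \(\mathcal P(\Omega^{\mathrm c})\), hence compact.
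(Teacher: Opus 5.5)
Your convexity and compactness arguments follow the paper's proof closely: fixed $V$-marginal, Kolmogorov--Chentsov for $X$, compact containment in $\{M_{p_0}\le R\}$ plus a metric Kolmogorov bound and Arzel\`a--Ascoli for $\mu$, then Portmanteau with lower semicontinuous functionals and Prokhorov.

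\textbf{The gap is in nonemptiness.} The lemma concerns $\mathfrak W$ for arbitrary radii satisfying \eqref{eq:candidate_radii}, that is, only $K_1\ge\mathbb E|X_0|^{p_0}$ and $K_2>0$. Your witness, the reference SDE solution, needs $K_1\ge C_{p_0}(1+\mathbb E|X_0|^{p_0})$ and $K_2$ at least a BDG constant. Nothing in \eqref{eq:candidate_radii} guarantees either bound, and the later choice \eqref{eq:fixed_point_radii} does not list these constants. So your reading proves a weaker statement that needs an extra enlargement of the radii.

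The rescaling fallback does not close the gap. At the admissible boundary $K_1=\mathbb E|X_0|^{p_0}$, take $dX=\varepsilon\Sigma\,dV$ with $\varepsilon\neq0$. Then $X_T=X_0+\varepsilon M_T$, where $\mathbb E[M_T\mid X_0]=0$ and $M_T$ is conditionally nondegenerate by (H2)(ii). Strict conditional Jensen gives $\mathbb E\|X\|_T^{p_0}\ge\mathbb E|X_T|^{p_0}>\mathbb E|X_0|^{p_0}=K_1$. Only $\varepsilon=0$ works.

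\textbf{The fix.} Membership in $\mathfrak K$ imposes no dynamics on $X$; it requires only the $(X_0,V)$-marginal $\lambda_0\otimes\mathbb W_V$ and compatibility. The paper therefore takes the frozen path $X_t\equiv X_0$ and $\mu_t\equiv\delta_0$. Then:
\begin{enumerate}
\item $\mathcal H_t=\sigma(X_0,V_s:s\le t)=\mathcal F_t^{X_0,V}$, so compatibility is trivial.
\item $\mathcal M_{p_0}(P)=\mathbb E|X_0|^{p_0}\le K_1$, because $M_{p_0}(\delta_0)=0$.
\item Both increments vanish, so the modulus bound holds for every $K_2>0$.
\end{enumerate}

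\textbf{A smaller point in the closedness step.} On $\overline{\mathcal Z}$ the marginals $\mu_t$ need not lie in $\mathcal P_2$. The lower semicontinuous functional should therefore be the extended transport cost $\mathsf C_2(\mu_t,\mu_s)^{p_{\rm tm}/2}\in[0,\infty]$, identified with $\mathcal W_2^{p_{\rm tm}}(\mu_t,\mu_s)$ only after Portmanteau yields $\mathcal M_{p_0}(P)\le K_1$. No modification of $P$ is needed, since that bound already makes $P$ charge only paths with $\sup_rM_{p_0}(\mu_r)<\infty$.
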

\begin{proof}
See Appendix \ref{app:proof_W_compact}.
\end{proof}

To prove invariance, we control the moment and time-modulus terms in \eqref{eq:W_definition}.

\begin{lemma}
\label{lem:response_estimates}
Under Assumptions (H0)--(H7), there are finite constants \(A_{p_0}(T)\), \(B_X(T),B_\mu(T)\), independent of \(P\), such that
\begin{align}
 \mathcal M_{p_0}(\mathcal R(P))&\le A_{p_0}(T),
 \label{eq:response_moment_bound}\\
 \mathbb E^{\mathcal R(P)}|X_t-X_s|^{p_{\rm tm}}
 &\le B_X(T)|t-s|^{1+\alpha_{\rm tm}},
 \label{eq:response_state_modulus}\\
 \mathbb E^{\mathcal R(P)}
 \mathcal W_2^{p_{\rm tm}}(\mu_t,\mu_s)
 &\le B_\mu(T)|t-s|^{1+\alpha_{\rm tm}}.
 \label{eq:response_modulus_bound}
\end{align}
\end{lemma}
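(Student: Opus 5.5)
We estimate the canonical coordinates \((X,\mu)\) of \(Q=\mathcal R(P)\) using the strong realization from \cref{lem:joint_response_wellposed}. On that basis, \(X\) solves the closed-loop equation \eqref{eq:joint_law_response_system} with feedback \(\phi^P\in\Phi_{ad}\) and \(\mu_t=m_t^{P,\mu}\) is the conditional law of \(x_t\) given \(\mathcal F_t^Y\). All constants will come only from (H1)--(H3), the \(\mathbb U\)-boundedness of \(\phi^P\), and the uniform constants in (H7); none of them depend on \(P\in\mathfrak W\). We prove the three bounds in the order moment, state modulus, conditional-law modulus.

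\emph{Moment bound \eqref{eq:response_moment_bound}.} Since \(\mathbb U\) is compact, \(|F(t,X,\phi^P(t,y_t),\mu_t)|\le C(1+\|X\|_t+\mathcal W_2(\mu_t,\delta_0))\) by (H3)(ii), and \(\Sigma\) is bounded by (H2). Burkholder--Davis--Gundy and Hölder then give
\[
\mathbb E\|X\|_t^{p_0}\le C\Big(1+\mathbb E|X_0|^{p_0}+\int_0^t\mathbb E\big[\|X\|_s^{p_0}+M_{p_0}(\mu_s)\big]ds\Big).
\]
This inequality cannot be closed by Gronwall directly, because \(\mu\) is the output and not an independent input. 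We close the loop with conditional Jensen: \(M_{p_0}(\mu_s)=\mathbb E[|x_s|^{p_0}\mid\mathcal F_s^Y]\le\mathbb E[\|X\|_s^{p_0}\mid\mathcal F_s^Y]\), so \(\mathbb E M_{p_0}(\mu_s)\le\mathbb E\|X\|_s^{p_0}\). Gronwall then bounds \(\sup_t\mathbb E\|X\|_t^{p_0}\). For \(\mathbb E\sup_tM_{p_0}(\mu_t)\), use \(\sup_tM_{p_0}(\mu_t)\le\sup_t\mathbb E[\|X\|_T^{p_0}\mid\mathcal F_t^Y]\). This is a martingale in \(t\), so it is controlled by the derived bound \eqref{eq:derived_conditional_moment_bound}: Doob's inequality at exponent \(p_\Lambda\)-adjusted, with the likelihood handled by \eqref{eq:H7_uniform_likelihood}. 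One can also obtain it by running the BDG estimate at a slightly larger exponent. The result is \(A_{p_0}(T)\), depending on \(\mathbb E|X_0|^{p_0}\), \(C_\Lambda\) and \(T\).

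\emph{State modulus \eqref{eq:response_state_modulus}.} For \(s<t\), write \(X_t-X_s=\int_s^tF\,dr+\int_s^t\Sigma\,dV_r\). BDG gives \(\mathbb E|\int_s^t\Sigma dV|^{p_{\rm tm}}\le C|t-s|^{p_{\rm tm}/2}\). Hölder gives
\[
\mathbb E\Big|\int_s^tF\,dr\Big|^{p_{\rm tm}}\le C|t-s|^{p_{\rm tm}}\big(1+\mathbb E\|X\|_T^{p_{\rm tm}}+\mathbb E\sup_rM_2(\mu_r)^{p_{\rm tm}/2}\big),
\]
and the bracket is finite by the previous step because \(p_{\rm tm}\le p_0\). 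Since \(p_{\rm tm}/2=1+\alpha_{\rm tm}\) and \(|t-s|\le T\), this gives \(B_X(T)\).

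\emph{Conditional-law modulus \eqref{eq:response_modulus_bound}.} This step is the main obstacle. The filter \(\mu\) is not a pathwise functional with an obvious Hölder modulus, and \(\mathcal W_2(\mu_t,\mu_s)\) cannot be bounded by coupling \(x_t\) and \(x_s\) under one conditional law, because the conditioning \(\sigma\)-fields \(\mathcal F_t^Y\) and \(\mathcal F_s^Y\) differ. The first attempt is to invoke (H7)(iii) directly: \(\mu=m^{P,\mu}\) is the limit output of the recursive construction, so \eqref{eq:H7_filter_time_modulus} applies with \(C_{\rm tm}\), and we set \(B_\mu(T):=C_{\rm tm}\). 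We have to check that the self-consistent output of \cref{lem:joint_response_wellposed} is indeed among the outputs covered by (H7)(iii). To do this, we pass the uniform modulus of the iterates to the limit. The iterates converge in \(L^2\) in \(\sup_t\mathcal W_2\) by \eqref{eq:H7_input_filter_stability} combined with \cref{lem:closed_loop_input_stability}. Fatou's lemma and lower semicontinuity of \(\mathcal W_2^{p_{\rm tm}}\) then preserve the bound. Finally, the bounds on \(\mathbb E^{\mathcal R(P)}\) are statements about the law, so they transfer from the realization to the canonical coordinates.
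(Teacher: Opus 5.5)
Your proposal is correct and follows essentially the paper's argument: BDG, H\"older, and Gronwall together with \eqref{eq:derived_conditional_moment_bound} for the moment bound, BDG and H\"older for the state modulus, and a direct appeal to (H7)(iii) with \(B_\mu(T)=C_{\rm tm}\) for the conditional-law modulus. Closing the state Gronwall loop by conditional Jensen, instead of inserting \(C_{\rm cond}\) as an input bound as the paper does, is a harmless variant, and your Fatou limit check in the last step is redundant because (H7)(iii) already explicitly covers the limit output.
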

\begin{proof}
See Appendix \ref{app:proof_response_estimates}.
\end{proof}

The constants above are independent of the finite radii in \eqref{eq:candidate_radii}; choose
\begin{equation}
 K_1\ge\max\{A_{p_0}(T),\mathbb E|X_0|^{p_0}\},\qquad
 K_2\ge B_X(T)+B_\mu(T).
 \label{eq:fixed_point_radii}
\end{equation}
Then \cref{lem:joint_response_wellposed} and \eqref{eq:response_moment_bound}--\eqref{eq:response_modulus_bound} give \(\mathcal R(\mathfrak W)\subseteq\mathfrak W\).  Having obtained a compact invariant domain, it remains to prove continuity of the response on it.

\begin{lemma}
\label{lem:T_continuity}
Under Assumptions (H0)--(H7), the map \(\mathcal R:\mathfrak W\to\mathfrak W\) is continuous for the weak topology.
\end{lemma}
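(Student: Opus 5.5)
Since \(\mathfrak W\) is compact and metrizable (\cref{lem:W_compact}), it suffices to prove sequential continuity. Take \(P_n\to P\) weakly in \(\mathfrak W\) and show \(\mathcal R(P_n)\to\mathcal R(P)\).

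\emph{Step 1: from weak convergence to \(\mathbf d_T\).} On \(\mathfrak W\) the moments \(\mathcal M_{p_0}(P)\le K_1\) with \(p_0\ge4>2\) give uniform integrability of the quadratic cost in \eqref{eq:joint_law_distance}. The time modulus in \eqref{eq:W_definition} gives tightness of the paths. Hence weak convergence on \(\mathfrak W\) upgrades to \(\mathbf d_T(P_n,P)\to0\); the \(\mu\)-coordinate is handled by the remark that \(\rho_{\mathcal Z}\)-convergence plus a \(p_0\)-moment bound yields uniform \(\mathcal W_2\) convergence. The selector estimate \eqref{eq:H7_selector_stability} then gives
\[
|\phi^{P_n}(t,y)-\phi^P(t,y)|\le \tfrac{L_H}{\lambda}(1+2|y|)\,\mathbf d_T(P_n,P)\to0,
\]
locally uniformly in \(y\) and with linear growth. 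This is the only way \(P\) enters the response.

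\emph{Step 2: a common basis.} By \cref{lem:joint_response_wellposed}, each \(\mathcal R(P_n)\) and \(\mathcal R(P)\) has a strong realization on the common compatible basis carrying \((X_0,V)\). Denote the realizations by \((X^n,\mu^n)\) and \((X,\mu)\). Since they are built from the same \((X_0,V)\) and are adapted to a jointly compatible enlargement, the pairwise comparisons allowed in (H7)(ii) apply.

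\emph{Step 3: a Gronwall estimate.} Write \(X^n-X\) using the drift in \eqref{eq:joint_law_response_system} and split the drift difference into two parts.
\begin{itemize}
\item The feedback part \(F_1(t,X^n,\mu^n_t)(\phi^{P_n}-\phi^{P})(t,y^n_t)\) is bounded by \(C(1+|y^n_t|)\mathbf d_T(P_n,P)\) via (H3).
\item The remaining part involves \(F(t,X^n,\phi^P(t,y^n_t),\mu^n_t)-F(t,X,\phi^P(t,y_t),\mu_t)\). It is Lipschitz in \(\|X^n-X\|_t\) and in \(\mathcal W_2(\mu^n_t,\mu_t)\), using (H3)(iii) and \(\phi^P\in\Phi_{ad}\).
\end{itemize}
The diffusion term is treated by BDG together with (H2). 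The consistency relation \eqref{eq:response_conditional_consistency} identifies \(\mu^n\) and \(\mu\) with the conditional-law outputs \(m\) of Assumption (H7)(ii). The filter stability \eqref{eq:H7_input_filter_stability} then bounds \(\mathbb E\sup_{s\le t}\mathcal W_2^2(\mu^n_s,\mu_s)\) by \(L_{\rm fil}\mathbb E\|X^n-X\|_t^2\). Collecting terms gives
\[
\mathbb E\|X^n-X\|_t^2\le C\,\mathbf d_T(P_n,P)^2\,\mathbb E\big(1+\|y^n\|_T^2\big)+C\int_0^t\mathbb E\|X^n-X\|_s^2\,ds,
\]
where the moment factor is bounded uniformly by \eqref{eq:response_moment_bound}. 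Gronwall then yields \(\mathbb E[\|X^n-X\|_T^2+\sup_t\mathcal W_2^2(\mu^n_t,\mu_t)]\to0\). Since \(V\) is common to both, \(\mathbf d_T(\mathcal R(P_n),\mathcal R(P))\to0\), which implies weak convergence.

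\emph{Main obstacle.} The delicate point is closing the Gronwall loop through the filter. The drift depends on \(\mu^n\), which is itself the conditional law of the output, so stability must be applied to the self-consistent fixed points rather than to frozen inputs. I would justify this by passing through the recursive construction of \cref{lem:joint_response_wellposed}. At each step, use \eqref{eq:closed_loop_input_stability} with the \(P_n\)-perturbation added, together with \eqref{eq:H7_input_filter_stability}. These two estimates give bounds uniform in the step index, and letting the step index go to infinity yields the estimate above. If absorbing \(L_{\rm fil}\) directly fails, I would instead run the estimate on a short horizon and concatenate the pieces.
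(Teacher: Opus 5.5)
Your proof is correct, but it handles the self-consistent limit by a different route than the paper.

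\textbf{The paper's route.} The paper never compares the two self-consistent responses directly. For each fixed recursion index \(k\), it shows \(\mathbf d_T(Q^{P_n,k},Q^{P,k})\to0\) as \(n\to\infty\); finite-step systems with different \(P\) are explicitly among the admissible comparisons in (H7)(ii). It then invokes the factorial convergence of the recursion, uniform over \(P\in\mathfrak W\), to get \(\sup_{P}\mathbf d_T(Q^{P,k},\mathcal R(P))\to0\). The triangle inequality, with \(n\to\infty\) first and then \(k\to\infty\), concludes.

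\textbf{Your route.} You run the Gronwall estimate from the uniqueness part of \cref{lem:joint_response_wellposed} directly on the pair of limits \((X^{P_n},\mu^{P_n})\) and \((X^P,\mu^P)\). This uses filter stability for self-consistent responses with different law parameters, a comparison the paper does not list explicitly in (H7)(ii). Your fallback supplies it. Iterating the stage inequalities \(D_k^n(t)\le C\int_0^t(D_k^n(s)+L_{\rm fil}D_{k-1}^n(s))\,ds+C\,\mathbf d_T^2(P_n,P)\) gives
\(D_k^n(T)\le C\,\mathbf d_T^2(P_n,P)\sum_{j\le k}(CT)^j/j!\le C'\,\mathbf d_T^2(P_n,P)\), uniformly in \(k\). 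The \(L^2\) convergence of the recursion then lets you pass \(k\to\infty\), and likewise for the measure coordinate through \(B_{k+1}^n\le L_{\rm fil}D_k^n\). Once you have this, the direct Gronwall on the limits is actually redundant.

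\textbf{Comparison.} Your route gives a quantitative Lipschitz bound \(\mathbf d_T(\mathcal R(P_n),\mathcal R(P))\le C\,\mathbf d_T(P_n,P)\). It also does not need uniformity in \(P\) of the recursive tail. The paper's route yields only qualitative continuity, but it stays strictly within the listed finite-step comparisons plus a tail estimate already proved.

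Finally, the short-horizon concatenation you mention is unnecessary. The filter term enters only under \(\int_0^t\), so \(L_{\rm fil}\) is absorbed by Gronwall with no smallness condition.
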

\begin{proof}
See Appendix \ref{app:proof_T_continuity}.
\end{proof}

Before applying the fixed-point theorem, we record how a fixed response law is identified with the optimizer selected in Section 3.

\begin{lemma}
\label{lem:fixed_point_optimal_response_identification}
Suppose \(P\in\mathfrak W\) and \(\mathcal R(P)=P\).  Then
\[
 \mathbb P^{P,*}\circ
 \bigl(X^P,V^{P,\phi^P},\eta\bigr)^{-1}=P .
\]
Consequently, \(\phi^P(t,y_t)\) is an optimal WPR response in the fixed-point realization and in every strong compatible realization having joint law \(P\).
\end{lemma}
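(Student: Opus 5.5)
The plan is to compare two descriptions of the same object. One is the law of the fixed-\(P\) optimally controlled system from Section~3. The other is the response law \(\mathcal R(P)=P\), which by \eqref{eq:joint_law_response_system}--\eqref{eq:response_conditional_consistency} solves the closed-loop equation with feedback \(\phi^P\) and input equal to its own \(\mu\)-coordinate.

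First, realize the EP-compatible \(P\)-environment. Take a compatible basis carrying \((X_0^P,V^P,\eta)\) with law \(\Gamma_P=P\circ(X_0,V,\mu)^{-1}\), which is possible since \(P\in\mathfrak K\). Under \(\mathbb P^{P,*}=\mathbb P^{P,\phi^P}\), the reference coordinate \(X^P\) solves \eqref{eq:fixed_P_controlled_state} with Brownian motion \(V^{P,\phi^P}\). By Condition (EP), \eqref{eq:environment_preservation} gives
\[
\mathbb P^{P,*}\circ(X_0^P,V^{P,\phi^P},\eta)^{-1}=\Gamma_P .
\]
Compatibility under \(\mathbb P^{P,*}\) should follow from compatibility of the reference basis: the density \(\Lambda^{P,\phi^P}\) is adapted, and the Girsanov shift of \(V^P\) is adapted. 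I would verify it through the conditional-expectation identity of Definition~\ref{def:compatible_joint_laws}, using Bayes' formula.

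Second, invoke Lemma~\ref{lem:closed_loop_input_stability} with \(\phi=\phi^P\) and \(\gamma=\Gamma_P\). On the fixed-\(P\) realization, \(X^P\) is the pathwise unique strong solution of the closed loop driven by \((X_0^P,V^{P,\phi^P},\eta)\), so
\[
X^P=S_{\phi^P,\Gamma_P}(X_0^P,V^{P,\phi^P},\eta)\quad\text{a.s.}
\]
Under \(P=\mathcal R(P)\), the canonical \(X\) solves the same equation \eqref{eq:joint_law_response_system} with input \(\mu\), and \((X_0,V,\mu)\) has the same law \(\Gamma_P\) on a compatible basis. Hence \(X=S_{\phi^P,\Gamma_P}(X_0,V,\mu)\) \(P\)-a.s. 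Since the joint law of \((S(\cdot),\cdot)\) is determined by \(\Gamma_P\), this yields
\[
\mathbb P^{P,*}\circ(X^P,V^{P,\phi^P},\eta)^{-1}=P\circ(X,V,\mu)^{-1}=P .
\]

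Third, for optimality: by Theorem~\ref{thm:fixed_P_existence}, \(\phi^P\) minimizes \(J_P\) over \(\Phi_{ad}\) on the EP-compatible \(P\)-environment. The objective \(J_P(\phi)\) depends only on \(\Gamma_P\) and \(\phi\). This holds because, for each \(\phi\), the controlled state is \(S_{\phi,\Gamma_P}\) applied to inputs of law \(\Gamma_P\), so the law of \((X,\eta)\), and hence the cost \eqref{eq:fixed_P_cost}, is the same on any compatible realization with that input law. Any strong compatible realization with joint law \(P\) has input law \(\Gamma_P\). Every deviation \(\phi\) there produces the same cost as on the fixed-\(P\) basis, provided (EP) is available there so that deviations preserve \(\Gamma_P\). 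Therefore \(\phi^P(t,y_t)\) is optimal there too. It remains \(\sigma(y_t)\)-measurable because \(\phi^P\) is deterministic.

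The main obstacle is the transfer step. It requires that Lemma~\ref{lem:closed_loop_input_stability}'s map \(S\) applies to the self-consistent system under \(P\), where \(\mu\) is not exogenous but equals the conditional law of \(x\) given \(\mathcal F^Y_t\). I would handle this by treating \(\mu\) as a given compatible input, which is legitimate since \(P\in\mathfrak K\). I would then check the moment hypothesis \(\mathbb E[\sup_t M_2(\mu_t)]<\infty\) from \(\mathcal M_{p_0}(P)\le K_1\). Finally, I would ensure that (EP) holds on arbitrary compatible realizations, by appealing to the uniformity clause of (H7) over compatible \(P\)-environments.
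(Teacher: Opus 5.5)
Your proposal is correct and follows essentially the same route as the paper: Condition~\emph{(EP)} fixes the environment law $\Gamma_P$ under $\mathbb P^{P,*}$. The uniqueness in law from \cref{lem:closed_loop_input_stability}, via the Borel map $S_{\phi^P,\Gamma_P}$, then identifies $\mathbb P^{P,*}\circ(X^P,V^{P,\phi^P},\eta)^{-1}$ with $\mathcal R(P)=P$. The same transfer applied to each deviation $\phi\in\Phi_{ad}$ shows that its cost is $J_P(\phi)$, so \cref{thm:fixed_P_existence} gives optimality.

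One minor simplification concerns another compatible realization with law $P$. There a deviation is the strong solution driven by the same $(X_0,V,\mu)$, so its environment law is $\Gamma_P$ automatically. Hence \emph{(EP)} is needed only on the reference basis that defines $J_P$, and you do not need to invoke (H7) to obtain it on arbitrary realizations.
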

\begin{proof}
See Appendix \ref{app:proof_fixed_point_optimal_response_identification}.
\end{proof}

We now have a continuous self-map of a nonempty compact convex set, so the Schauder--Tychonoff theorem applies.

\begin{theorem}
\label{thm:weak_equilibrium}
Suppose Assumptions (H0)--(H7) hold.  Then there exists a weak WPR-MFG equilibrium.
\end{theorem}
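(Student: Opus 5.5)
The proof will be a direct application of the Schauder--Tychonoff fixed-point theorem to the response map \(\mathcal R\) on the set \(\mathfrak W\) of \eqref{eq:W_definition}, followed by an identification step. First, choose the radii \(K_1,K_2\) as in \eqref{eq:fixed_point_radii}. This is possible because the constants \(A_{p_0}(T)\), \(B_X(T)\), \(B_\mu(T)\) of \cref{lem:response_estimates} do not depend on \(P\) or on the radii. With these radii, \cref{lem:W_compact} shows that \(\mathfrak W\) is a nonempty, convex, compact subset of \(\mathcal P(\Omega^{\mathrm c})\) with the weak topology.

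Second, \cref{lem:joint_response_wellposed} makes \(\mathcal R(P)\) well defined for each \(P\in\mathfrak W\) and places it in \(\mathfrak K\) with \(\mathcal M_{p_0}(\mathcal R(P))<\infty\). Combining \eqref{eq:response_moment_bound}--\eqref{eq:response_modulus_bound} with the choice \eqref{eq:fixed_point_radii} gives \(\mathcal R(\mathfrak W)\subseteq\mathfrak W\). By \cref{lem:T_continuity}, \(\mathcal R\) is continuous for the weak topology on \(\mathfrak W\).

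Third, we realize the topology inside a locally convex space. The weak topology on \(\mathcal P(\Omega^{\mathrm c})\) is the trace of the topology on the space of finite signed Borel measures generated by the seminorms \(\nu\mapsto|\int f\,d\nu|\), \(f\in C_b(\Omega^{\mathrm c})\). This space is locally convex and Hausdorff. Its convex structure agrees with mixtures of laws, so the convexity of \(\mathfrak W\) from \cref{lem:W_compact}, which in turn relies on \cref{lem:compatible_law_closed}, is convexity in this ambient space. Since \(\Omega^{\mathrm c}\) is Polish and \(\mathfrak W\) is compact, \(\mathfrak W\) is metrizable, so continuity can be checked sequentially, as it is in \cref{lem:T_continuity}. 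Schauder--Tychonoff then yields \(P^*\in\mathfrak W\subseteq\mathfrak K\) with \(\mathcal R(P^*)=P^*\).

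Finally, we confirm that \(P^*\) is an equilibrium in the sense of the definition. Since \(\mathcal R(P^*)=P^*\) is the compatible \(P^*\)-response, the canonical coordinates under \(P^*\) satisfy \eqref{eq:joint_law_response_system} with feedback \(\phi^{P^*}\), and they satisfy the consistency \eqref{eq:response_conditional_consistency}; together these give \eqref{eq:weak_equilibrium_consistency}. \cref{lem:fixed_point_optimal_response_identification} shows that \(\phi^{P^*}(t,y_t)\) is an optimal WPR response in the fixed-point realization. The control is admissible and \(\mathcal G_t^I\)-measurable by \cref{lem:joint_response_wellposed}.

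The step most likely to need care is the functional-analytic embedding in the third step, specifically convexity. Convexity of \(\mathfrak W\) must be convexity in the ambient vector space, not merely a property preserved by some nonlinear operation. The moment bound and the time-modulus bound are linear in \(P\), and \(\mathfrak K\) is convex by \cref{lem:compatible_law_closed}, so convex combinations of laws stay in \(\mathfrak W\); this should settle the point, and all substantive difficulty is already contained in the preceding lemmas.
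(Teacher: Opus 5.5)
Your proposal is correct and follows essentially the same route as the paper's proof. The paper likewise chooses radii as in \eqref{eq:fixed_point_radii} to get invariance, and takes compactness and convexity of \(\mathfrak W\) from \cref{lem:W_compact} and continuity from \cref{lem:T_continuity}. It then applies Schauder--Tychonoff in the locally convex space of finite signed measures and uses \cref{lem:fixed_point_optimal_response_identification} for optimality. Your extra checks on the ambient topology, metrizability, and convexity in the ambient vector space only spell out what the paper states in one line.
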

\begin{proof}
See Appendix \ref{app:proof_weak_equilibrium}.
\end{proof}

The compatible response construction upgrades the fixed point to a strong realization.

\begin{theorem}
\label{thm:strong_equilibrium}
Suppose Assumptions (H0)--(H7) hold.  Every weak equilibrium obtained in \cref{thm:weak_equilibrium} admits a strong compatible realization.  For a fixed equilibrium law \(P^*\) and the feedback \(\phi^{P^*}\), the associated self-consistent strong response is pathwise unique within the class covered by Assumption (H7)(ii).
\end{theorem}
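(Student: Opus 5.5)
The proof has two parts: pathwise uniqueness of the self-consistent response for the fixed law \(P^*\) and feedback \(\phi^{P^*}\), and then a compatible Yamada--Watanabe argument that turns the weak fixed point of \cref{thm:weak_equilibrium} into a strong realization. Throughout we fix the equilibrium law \(P^*\) from \cref{thm:weak_equilibrium}, with feedback \(\phi:=\phi^{P^*}\in\Phi_{ad}\). Freezing \(P^*\) freezes \(\phi\), so the only unknown is the pair \((X,\mu)\) solving the conditional McKean--Vlasov system
\[
 dX_t=F(t,X,\phi(t,y_t),\mu_t)dt+\Sigma(t,X)dV_t,\qquad
 \mu_t=\mathbb P(x_t\in\cdot\mid\mathcal F_t^Y),
\]
on a compatible basis carrying \((X_0,V)\).

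\emph{Pathwise uniqueness.} Take two self-consistent solutions \((X^1,\mu^1)\) and \((X^2,\mu^2)\) on the same jointly compatible basis, with the same \((X_0,V)\), lying in the class of (H7)(ii). The steps are as follows.
\begin{enumerate}
\item View each \(X^i\) as the closed-loop state driven by the input \(\eta=\mu^i\). \cref{lem:closed_loop_input_stability} then gives
\[
 \mathbb E\|X^1-X^2\|_t^2\le C_{\rm st}\int_0^t\mathbb E\sup_{r\le s}\mathcal W_2^2(\mu^1_r,\mu^2_r)\,ds.
\]
\item Because \(\mu^i=m^i\) is the conditional law produced by that input, (H7)(ii) gives
\[
 \mathbb E\sup_{r\le s}\mathcal W_2^2(\mu^1_r,\mu^2_r)\le L_{\rm fil}\,\mathbb E\|y^1-y^2\|_s^2\le L_{\rm fil}\,\mathbb E\|X^1-X^2\|_s^2 .
\]
\item Combining the two bounds yields \(f(t)\le C_{\rm st}L_{\rm fil}\int_0^t f(s)ds\) for \(f(t):=\mathbb E\|X^1-X^2\|_t^2\). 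Gronwall gives \(f\equiv0\).
\item Step 2 then gives \(\mu^1=\mu^2\), up to indistinguishability, by continuity of the chosen versions.
\end{enumerate}

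\emph{Strong realization via Yamada--Watanabe.} We adapt the compatible Yamada--Watanabe theorem of Kurtz, in the form used by Carmona--Delarue--Lacker, to inputs \((X_0,V)\) and outputs \((X,\mu)\). By \cref{lem:fixed_point_optimal_response_identification}, the canonical realization under \(P^*\) is a weak compatible solution whose control \(\phi(t,y_t)\) is optimal. Let \(Q\) be the regular conditional law of \((X,\mu)\) given \((X_0,V)\) under \(P^*\), and take two copies \((X^i,\mu^i)\), conditionally independent given \((X_0,V)\). The steps are as follows.
\begin{enumerate}
\item Show that the product extension is still jointly compatible. Each factor is compatible, and conditional independence given \((X_0,V)\) preserves the compatibility identity on the product filtration; this is the standard argument.
\item Check that each copy still satisfies the consistency \(\mu^i_t=\mathbb P(x^i_t\in\cdot\mid\mathcal F^{y^i}_t)\) on the enlarged basis. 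Here one uses that \(\mu^i\) is a nonanticipative functional of \(y^i\), as required in (H7)(ii), so conditioning on the enlarged filtration does not change it.
\item Apply pathwise uniqueness to get \(X^1=X^2\) and \(\mu^1=\mu^2\) almost surely. Then \(Q\) is a Dirac kernel, so \((X,\mu)=G(X_0,V)\) for a Borel nonanticipative map \(G\).
\item On any compatible basis carrying \((X_0,V)\), set \((X^*,\mu^*):=G(X_0,V)\). This pair is adapted, has joint law \(P^*\), and solves the closed-loop equation.
\item Verify the remaining properties. The control \(\phi(t,y^*_t)\) is \(\sigma(y^*_t)\)-measurable, so it is \(\mathcal G_t^I\)-measurable. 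It is optimal by \cref{lem:fixed_point_optimal_response_identification}, applied to strong compatible realizations with law \(P^*\). Consistency \eqref{eq:strong_equilibrium_consistency} holds because the joint law equals \(P^*\) and \(\mu^*\) is a functional of the observation history.
\end{enumerate}

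The main obstacle is step 2 of the Yamada--Watanabe part. The conditional-law constraint is not a pathwise SDE, so it must be shown to survive the coupling. The approach is to use the nonanticipative-functional representation in (H7)(ii) together with the compatibility of the product basis. The stability estimate \eqref{eq:H7_input_filter_stability} is explicitly required to hold after the compatible couplings invoked, and that is what makes the Gronwall step in the pathwise-uniqueness argument legitimate on the coupled basis.
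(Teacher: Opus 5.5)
Your proposal is correct, and its core matches the paper's proof. The pathwise-uniqueness step is exactly the paper's argument: state stability from \cref{lem:closed_loop_input_stability}, the filter bound \eqref{eq:H7_input_filter_stability} on the coupled basis, Gronwall, and then \(\mu^1=\mu^2\). Both proofs also rely on Kurtz's compatible Yamada--Watanabe theorem.

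The one real difference is where the strong realization comes from. The paper takes it directly from \cref{lem:joint_response_wellposed}. Its recursive scheme already builds \((X,\mu)\) as a nonanticipative functional of \((X_0,V)\) on the construction basis, with law \(\mathcal R(P^*)=P^*\). Yamada--Watanabe is then needed only to transfer this solution to arbitrary compatible bases. You instead rebuild the strong solution from the weak fixed point using conditionally independent copies. This is self-contained, and it is precisely what Kurtz's Theorem~1.5 packages; it simply does not exploit the strong solution the paper already has.

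Your Step~2 is not a genuine obstacle. Since \(\mu_t\) is a Borel functional of \(y_{\cdot\wedge t}\), the consistency identity is a statement about the law of \((X^i,V,\mu^i)\), and that law is \(P^*\) for each copy. Moreover, the conditioning is on \(\mathcal F_t^{y^i}\), not on the enlarged filtration, so nothing changes after the coupling. The genuine input on the coupled basis is that \eqref{eq:H7_input_filter_stability} still holds there, and the paper imposes this explicitly as part of (H7)(ii).
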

\begin{proof}
See Appendix \ref{app:proof_strong_equilibrium}.
\end{proof}
\section{An Interbank Lending Example with WPR Observation}
\label{sec:lq_interbank}

This section illustrates Sections~2--5 through an interbank lending model with WPR observation.  The reserve dynamics follow \cite{carmona2015systemic}; a latent liquidity pressure shifts borrowing costs and is observed through a common signal.  We compare the resulting PR and WPR equilibria.

\subsection{Model and equilibrium equation systems}
\label{subsec:lq_closed_loop}

Let \(B\) and \(W\) be independent one-dimensional Brownian motions.  Let \(\Theta_0\sim N(\overline\Theta_0,\mathsf V_0^\Theta)\), with \(\mathsf V_0^\Theta>0\), be independent of them.  The latent pressure and the public observation satisfy
\begin{equation}
 d\Theta_t=-\varrho_\Theta\Theta_tdt,
 \qquad
 dy_t=h\Theta_tdt+\sigma_y dB_t,
 \qquad y_0=0,
 \label{eq:lq_factor_observation}
\end{equation}
where \(\varrho_\Theta>0\), \(h\ne0\), and \(\sigma_y>0\).  Perfect recall uses \(\mathcal F_t^Y=\sigma(y_s:0\le s\le t)\), with the usual augmentation, whereas WPR uses \(\mathcal G_t^I=\sigma(y_t)\).

For a frozen conditional mean \(m\) and a frozen mean control \(\overline u\), the reserve of the representative bank is
\begin{equation}
 dx_t=\bigl[a(m_t-x_t)+u_t\bigr]dt+\sigma dW_t,
 \qquad x_0\in L^2,
 \label{eq:lq_reserve}
\end{equation}
where \(a\ge0\), \(\sigma>0\), and \(x_0\) is independent of \((\Theta_0,B,W)\), with \(m_0=\mathbb E[x_0]\).  The constant \(c\in\mathbb R\) is a deterministic terminal target.  The cost is
\begin{equation}
\begin{aligned}
 J(u;m,\overline u)=\mathbb E\Bigg[\int_0^T\Big(&
 \frac{\mathsf R_u}{2}u_t^2+\mathsf R_cu_t\overline u_t-\Theta_tu_t
 -\mathsf q_{ux}u_t(m_t-x_t)
 +\frac{\mathsf Q_x}{2}(m_t-x_t)^2\Big)dt\\
 &+\frac{\mathsf Q_T}{2}(x_T-c)^2\Bigg].
\end{aligned}
\label{eq:lq_cost_new}
\end{equation}
We assume
\begin{equation}
 \mathsf R_u>0,\quad \mathsf R_c\ge0,\quad
 a,\mathsf q_{ux},\mathsf Q_x,\mathsf Q_T\ge0,\qquad
 \mathsf R_u\mathsf Q_x>\mathsf q_{ux}^{\,2}.
 \label{eq:lq_strict_convexity}
\end{equation}
Here \(\mathsf R_cu_t\overline u_t\) represents borrowing congestion, \(-\Theta_tu_t\) transmits latent liquidity pressure to marginal cost, and \(-\mathsf q_{ux}u_t(m_t-x_t)\) links borrowing to the reserve gap.  At equilibrium,
\begin{equation}
 m_t=\mathbb E[x_t\mid\mathcal F_t^Y],
 \qquad \overline u_t=\mathbb E[u_t\mid\mathcal F_t^Y]=u_t.
 \label{eq:lq_consistency_new}
\end{equation}
Set \(\mathsf R_{\rm eff}:=\mathsf R_u+\mathsf R_c\).

For an admissible control \(u\), define \(\chi_t^u:=\mathbb E[x_t^u\mid\mathcal F_t^Y]\).  Then
\begin{equation}
 d\chi_t^u=\bigl[a(m_t-\chi_t^u)+u_t\bigr]dt,
 \qquad
 d(x_t^u-\chi_t^u)=-a(x_t^u-\chi_t^u)dt+\sigma dW_t.
 \label{eq:lq_projection}
\end{equation}
The second equation is independent of \(u\), so conditional variance terms do not affect the minimizer.

Let \(\mathcal U_{\rm PR}^{\rm LQ}\) denote the square-integrable \(\mathbb F^Y\)-progressively measurable controls.  The WPR space \(\mathcal U_{\rm WPR}^{\rm LQ}\) consists of the square-integrable controls \(u_t=\phi(t,y_t)\), where \(\phi\) is jointly Borel.  Both are closed Hilbert spaces under \(\|u\|_2^2:=\mathbb E\int_0^T|u_t|^2dt\), and the next result covers both information structures.

\begin{proposition}
\label{prop:lq_frozen_response}
Let \((m,\overline u)\) be an \(\mathbb F^Y\)-progressively measurable pair satisfying
\[
 \mathbb E\!\int_0^T(|m_t|^2+|\overline u_t|^2)dt<\infty.
\]
The projected problem has a unique minimizer in the PR control space and in the WPR control space.  Let \(\varpi^u\) be the corresponding adjoint process.  At equilibrium, the optimal control satisfies
\begin{equation}
\begin{cases}
 dm_t=u_tdt,\qquad m_0=\mathbb E[x_0],\\
 d\varpi_t^u=(a\varpi_t^u-\mathsf q_{ux}u_t)dt+d\mathcal N_t^u,\\
 \varpi_T^u=\mathsf Q_T(m_T-c),\\
 \mathsf R_{\rm eff}u_t
 =\mathbb E[\Theta_t-\varpi_t^u\mid\mathcal I_t],
\end{cases}
\label{eq:lq_common_optimality}
\end{equation}
Here \(\mathcal I_t=\mathcal F_t^Y\) under PR and \(\mathcal I_t=\mathcal G_t^I\) under WPR.  The process \(\mathcal N^u\) is a square-integrable \(\mathbb F^Y\)-martingale, normalized by \(\mathcal N_0^u=0\).  This system characterizes the unique global minimizer in the corresponding information class.
\end{proposition}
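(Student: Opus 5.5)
The plan is a Hilbert-space convex-minimization argument on the projected problem, followed by a first-order condition written through an adjoint process. The common frame is \(\mathcal H:=L^2(dt\otimes d\mathbb P;\mathbb R)\), with the closed subspace \(\mathcal U_{\mathcal I}\) equal to \(\mathcal U_{\rm PR}^{\rm LQ}\) or \(\mathcal U_{\rm WPR}^{\rm LQ}\).

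\textbf{Projection and strict convexity.} With \((m,\overline u)\) frozen, I will use \eqref{eq:lq_projection} and write \(x^u=\chi^u+\epsilon\). Here \(\epsilon\) is the Ornstein--Uhlenbeck residual, independent of \(u\) and of \(\mathcal F_T^Y\); it is independent of \(\mathcal F^Y_T\) because \(W\), \(x_0-\mathbb E x_0\) are independent of \((\Theta_0,B)\). I will take \(\chi_0=\mathbb E x_0\) after conditioning on the trivial \(\mathcal F^Y_0\).

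Because \(u_t\) and \(m_t-\chi_t^u\) are \(\mathcal F_t^Y\)-measurable and \(\mathbb E[\epsilon_t\mid\mathcal F^Y_t]=0\), the cross terms \(u_t\epsilon_t\) and \((m_t-\chi^u_t)\epsilon_t\) vanish in expectation. So \(J(u)=\widetilde J(u)+\text{const}\), where \(\widetilde J\) replaces \(x\) by \(\chi^u\). Likewise \(\mathbb E[\Theta_tu_t]=\mathbb E[\widehat\Theta_tu_t]\) with \(\widehat\Theta_t:=\mathbb E[\Theta_t\mid\mathcal F_t^Y]\).

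The map \(u\mapsto\chi^u\) is affine and continuous from \(\mathcal H\) into \(L^2\) of continuous paths, by Gronwall. Hence \(\widetilde J\) is a continuous quadratic functional. Its quadratic part is
\[\tfrac{\mathsf R_u}{2}u^2-\mathsf q_{ux}u(m-\chi)+\tfrac{\mathsf Q_x}{2}(m-\chi)^2+\tfrac{\mathsf Q_T}{2}\chi_T^2,\]
where \(\chi\) here denotes the linear part \(\delta\chi\) of \(\chi^u\) in \(u\). By \eqref{eq:lq_strict_convexity} this is bounded below by \(\kappa u^2\) with \(\kappa>0\), since the \(2\times2\) matrix is positive definite; the congestion term \(\mathsf R_cu\overline u\) is linear because \(\overline u\) is frozen. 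So \(\widetilde J\) is strongly convex and coercive on \(\mathcal U_{\mathcal I}\). The projection theorem (or Lax--Milgram) then gives a unique minimizer in each closed subspace, and a control is optimal if and only if the Gâteaux derivative vanishes on \(\mathcal U_{\mathcal I}\).

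\textbf{Adjoint and first-order condition.} I will compute the derivative in the direction \(v\in\mathcal U_{\mathcal I}\) using \(\delta\chi\), which solves \(d\delta\chi=(-a\delta\chi+v)dt\), \(\delta\chi_0=0\). Next I will define \(\varpi\) as the \(\mathbb F^Y\)-BSDE solution, via the martingale representation for \(\mathbb F^Y\) (Lemma \ref{lem:joint_posterior_innovation}, or simply the Gaussian innovation structure), of
\[d\varpi_t=\bigl(a\varpi_t-\mathsf q_{ux}u_t+\mathsf Q_x(m_t-\chi_t)\bigr)dt+d\mathcal N_t,\qquad \varpi_T=\mathsf Q_T(\chi_T-c).\]
Applying Itô's product rule to \(\varpi\,\delta\chi\) turns the state-dependent terms into \(\mathbb E\int\varpi_tv_tdt\). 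This yields
\[0=\mathbb E\int_0^T\bigl(\mathsf R_uu_t+\mathsf R_c\overline u_t-\widehat\Theta_t+\varpi_t\bigr)v_tdt\quad\text{for all }v\in\mathcal U_{\mathcal I}.\]
Equivalently, \(\mathsf R_uu_t+\mathsf R_c\overline u_t=\mathbb E[\Theta_t-\varpi_t\mid\mathcal I_t]\). Under WPR this uses that the orthogonal projection onto \(\{\phi(t,y_t)\}\) is the conditional expectation given \(\sigma(y_t)\) for a.e. \(t\), and that \(\sigma(y_t)\subset\mathcal F^Y_t\) makes the tower property compatible with \(\widehat\Theta\).

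\textbf{Equilibrium substitution.} I will impose \eqref{eq:lq_consistency_new}, namely \(\overline u=u\) and \(m=\chi^u\). The first makes the coefficient \(\mathsf R_{\rm eff}\). The second kills the \(a(m-\chi)\) drift, so \(dm=u\,dt\), and removes the \(\mathsf Q_x\) term in the adjoint drift, giving \eqref{eq:lq_common_optimality}.

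\textbf{Main obstacle.} The delicate point is the WPR projection identity, since \(\mathbb G^I\) is not a filtration. I expect it to be handled by noting that \(\mathcal U_{\rm WPR}^{\rm LQ}\) is the closed subspace \(\{g:g_t\ \sigma(y_t)\text{-measurable a.e.}\}\) of \(\mathcal H\). Its orthogonal projection is \((t,\omega)\mapsto\mathbb E[g_t\mid y_t]\), with a jointly measurable version obtained by disintegrating the law of \((t,y_t)\) under \(dt\otimes d\mathbb P\). Closedness follows from an \(L^2\)-limit being \(\sigma(y_t)\)-measurable for a.e. \(t\) along a subsequence.

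Finally, I note that the characterization is sufficient as well as necessary, which follows from convexity: any pair solving the system makes the derivative vanish, hence it is the unique minimizer.
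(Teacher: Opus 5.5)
Your proposal follows essentially the paper's route: split off the \(u\)-independent residual \(x^u-\chi^u\), use positive definiteness of the \(2\times2\) Hessian (from \(\mathsf R_u\mathsf Q_x>\mathsf q_{ux}^{2}\)) to get strong convexity and a unique minimizer on each closed subspace, compute the projected first variation through an \(\mathbb F^Y\)-adjoint, impose \(\chi^u=m\) and \(\overline u=u\), and obtain sufficiency from convexity.

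There is one slip in your intermediate frozen first-order condition, and it does not affect the result. Differentiating \(-\mathsf q_{ux}u_t(m_t-\chi_t^u)\) directly in \(u\) adds a term \(-\mathsf q_{ux}(m_t-\chi_t^u)\) to the integrand, which your condition omits. In addition, under WPR the congestion term projects to \(\mathsf R_c\mathbb E[\overline u_t\mid y_t]\), not \(\mathsf R_c\overline u_t\). Both discrepancies vanish once \(\overline u=u\) and \(m=\chi^u\) are imposed. For the sufficiency direction, \(m=\chi^u\) also follows from the system itself, since \(d(\chi^u-m)=-a(\chi^u-m)\,dt\) with \(\chi_0^u=m_0\). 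So the equilibrium system and the sufficiency claim are unaffected.
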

\begin{proof}
See Appendix~\ref{app:proof_lq_frozen_response}.
\end{proof}

\noindent\textbf{[PR Equation System].} Set \(\widehat\Theta_t:=\mathbb E[\Theta_t\mid\mathcal F_t^Y]\), and let \(\mathsf V_t^{\Theta\mid{\rm PR}}\) be the filtering error variance.  The Kalman--Bucy equations are
\begin{equation}
\begin{aligned}
 d\widehat\Theta_t&=-\varrho_\Theta\widehat\Theta_tdt
 +K_t^{\rm f}(dy_t-h\widehat\Theta_tdt),
 &K_t^{\rm f}&=\frac{h\mathsf V_t^{\Theta\mid{\rm PR}}}{\sigma_y^2},\\
 \dot{\mathsf V}_t^{\Theta\mid{\rm PR}}
 &=-2\varrho_\Theta\mathsf V_t^{\Theta\mid{\rm PR}}
 -\frac{h^2}{\sigma_y^2}
 (\mathsf V_t^{\Theta\mid{\rm PR}})^2,
 &\mathsf V_0^{\Theta\mid{\rm PR}}&=\mathsf V_0^\Theta.
\end{aligned}
\label{eq:lq_pr_filter_new}
\end{equation}
with \(\widehat\Theta_0=\overline\Theta_0\). The normalized innovation \(d\widehat B_t^Y=\sigma_y^{-1}(dy_t-h\widehat\Theta_tdt)\) is an \(\mathbb F^Y\)-Brownian motion.  Define the deterministic coefficients \(\mathfrak r\) and \(\mathfrak s\) by
\begin{equation}
\begin{aligned}
 \dot{\mathfrak r}_t
 &=\frac{\mathfrak r_t^2}{\mathsf R_{\rm eff}}
 +\left(a+\frac{\mathsf q_{ux}}{\mathsf R_{\rm eff}}\right)
 \mathfrak r_t,
 &\mathfrak r_T&=\mathsf Q_T,\\
 \dot{\mathfrak s}_t
 &=\left(a+\varrho_\Theta
 +\frac{\mathsf q_{ux}+\mathfrak r_t}{\mathsf R_{\rm eff}}\right)
 \mathfrak s_t
 -\frac{\mathsf q_{ux}+\mathfrak r_t}{\mathsf R_{\rm eff}},
 &\mathfrak s_T&=0.
\end{aligned}
\label{eq:lq_pr_riccati_new}
\end{equation}
The PR feedback is
\begin{equation}
 u_t^{\rm PR}
 =-\frac{\mathfrak r_t}{\mathsf R_{\rm eff}}(m_t^{\rm PR}-c)
 +\frac{1-\mathfrak s_t}{\mathsf R_{\rm eff}}\widehat\Theta_t.
 \label{eq:lq_pr_control_new}
\end{equation}
Thus the PR system consists of \eqref{eq:lq_factor_observation}, \eqref{eq:lq_pr_filter_new}, \eqref{eq:lq_pr_riccati_new}, and
\begin{equation}
\begin{cases}
 dm_t^{\rm PR}=u_t^{\rm PR}dt,
 &m_0^{\rm PR}=m_0,\\
 dx_t^{\rm PR}
 =[a(m_t^{\rm PR}-x_t^{\rm PR})+u_t^{\rm PR}]dt+\sigma dW_t.
\end{cases}
\label{eq:lq_pr_closed_new}
\end{equation}

\noindent\textbf{[WPR Equation System].} Under WPR, \eqref{eq:lq_common_optimality} and the linear adjoint equation give
\begin{equation}
\begin{aligned}
\mathsf R_{\rm eff}u_t
&+e^{-a(T-t)}\mathsf Q_T\int_0^T\mathbb E[u_s\mid y_t]ds
+\mathsf q_{ux}\int_t^Te^{-a(s-t)}\mathbb E[u_s\mid y_t]ds\\
&=\mathbb E[\Theta_t\mid y_t]
-e^{-a(T-t)}\mathsf Q_T(m_0-c).
\end{aligned}
\label{eq:lq_wpr_exact_new}
\end{equation}
We first solve this equation over the full closed space of square-integrable controls of the form \(u_t=\phi(t,y_t)\); no affine restriction is imposed.

The Gaussian structure then yields a finite-dimensional representation.  Write \(\overline\Theta_t=\mathbb E[\Theta_t]\) and \(\overline y_t=\mathbb E[y_t]\), and define
\begin{equation}
 \mathsf V_t^\Theta=\operatorname{Var}(\Theta_t),\quad
 \mathsf C_t^{\Theta y}=\operatorname{Cov}(\Theta_t,y_t),\quad
 \mathsf V_t^y=\operatorname{Var}(y_t),\quad
 \mathsf C^{yy}(s,t)=\operatorname{Cov}(y_s,y_t).
 \label{eq:lq_covariance_notation_new}
\end{equation}
These quantities satisfy
\begin{equation}
\begin{aligned}
 \dot{\overline\Theta}_t&=-\varrho_\Theta\overline\Theta_t,
 &\dot{\overline y}_t&=h\overline\Theta_t,\\
 \dot{\mathsf V}_t^\Theta&=-2\varrho_\Theta\mathsf V_t^\Theta,
 &\dot{\mathsf C}_t^{\Theta y}
 &=-\varrho_\Theta\mathsf C_t^{\Theta y}+h\mathsf V_t^\Theta,\\
 \dot{\mathsf V}_t^y&=2h\mathsf C_t^{\Theta y}+\sigma_y^2.
\end{aligned}
\label{eq:lq_moment_odes_new}
\end{equation}
The initial values are \(\overline y_0=0\), \(\mathsf C_0^{\Theta y}=0\), and \(\mathsf V_0^y=0\), together with the prescribed \(\overline\Theta_0\) and \(\mathsf V_0^\Theta\).  For \(s\ge t\),
\begin{equation}
 \mathsf C^{yy}(s,t)=\mathsf V_t^y
 +\frac{h\mathsf C_t^{\Theta y}}{\varrho_\Theta}
 (1-e^{-\varrho_\Theta(s-t)}),
 \qquad \mathsf C^{yy}(t,s)=\mathsf C^{yy}(s,t).
 \label{eq:lq_cross_covariance_new}
\end{equation}

The unique WPR control is affine in the current observation:
\begin{equation}
 u_t^{\rm WPR}=\mathsf b_t^{\rm W}
 +\mathsf k_t^{\rm W}(y_t-\overline y_t).
 \label{eq:lq_wpr_affine_new}
\end{equation}
Its deterministic coefficients solve
\begin{equation}
\begin{aligned}
\mathsf R_{\rm eff}\mathsf b_t^{\rm W}
&+e^{-a(T-t)}\mathsf Q_T\left(m_0-c
 +\int_0^T\mathsf b_s^{\rm W}ds\right)\\
&+\mathsf q_{ux}\int_t^Te^{-a(s-t)}\mathsf b_s^{\rm W}ds
=\overline\Theta_t,
\end{aligned}
\label{eq:lq_wpr_mean_new}
\end{equation}
and, for \(dt\)-a.e. \(t\in(0,T)\),
\begin{equation}
\begin{aligned}
\mathsf R_{\rm eff}\mathsf k_t^{\rm W}
&+\frac{e^{-a(T-t)}\mathsf Q_T}{\mathsf V_t^y}
 \int_0^T\mathsf k_s^{\rm W}\mathsf C^{yy}(s,t)ds\\
&+\frac{\mathsf q_{ux}}{\mathsf V_t^y}
 \int_t^Te^{-a(s-t)}\mathsf k_s^{\rm W}\mathsf C^{yy}(s,t)ds
=\frac{\mathsf C_t^{\Theta y}}{\mathsf V_t^y}.
\end{aligned}
\label{eq:lq_wpr_gain_new}
\end{equation}
The value of \(\mathsf k_0^{\rm W}\) is immaterial for the \(dt\otimes d\mathbb P\) equivalence class.

\begin{theorem}
\label{thm:lq_pr_wpr_new}
Under the preceding parameter assumptions, in particular \eqref{eq:lq_strict_convexity}, the PR system \eqref{eq:lq_pr_filter_new}--\eqref{eq:lq_pr_closed_new} has a unique strong solution.  Equations \eqref{eq:lq_wpr_mean_new}--\eqref{eq:lq_wpr_gain_new} have a unique solution
\[
 (\mathsf b^{\rm W},\mathsf k^{\rm W})
 \in L^2(0,T)\times
 L^2((0,T),e^{-at}\mathsf V_t^y dt).
\]
The associated WPR equilibrium is the unique strong solution of
\begin{equation}
\begin{cases}
 d\Theta_t=-\varrho_\Theta\Theta_tdt,
 &\Theta_0\sim N(\overline\Theta_0,\mathsf V_0^\Theta),\\
 dy_t=h\Theta_tdt+\sigma_y dB_t,\\
 u_t^{\rm WPR}=\mathsf b_t^{\rm W}
 +\mathsf k_t^{\rm W}(y_t-\overline y_t),\\
 dm_t^{\rm WPR}=u_t^{\rm WPR}dt,
 &m_0^{\rm WPR}=m_0,\\
 dx_t^{\rm WPR}
 =[a(m_t^{\rm WPR}-x_t^{\rm WPR})+u_t^{\rm WPR}]dt+\sigma dW_t.
\end{cases}
\label{eq:lq_wpr_closed_new}
\end{equation}
Both systems satisfy \eqref{eq:lq_consistency_new}.  Their controls are the unique best responses in the corresponding information classes.
\end{theorem}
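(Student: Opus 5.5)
The plan has three parts: first the PR system, then the WPR coefficient equations, and finally the closed-loop systems and the consistency conditions, using \cref{prop:lq_frozen_response} throughout.

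\textbf{PR system.} Equations \eqref{eq:lq_pr_filter_new} and \eqref{eq:lq_pr_riccati_new} are deterministic.
\begin{itemize}
\item The variance equation is a scalar Riccati equation with a nonpositive quadratic term and positive initial datum, so its solution stays in \((0,\mathsf V_0^\Theta]\) and exists globally.
\item The \(\mathfrak r\)-equation is a backward Bernoulli equation, which linearizes under \(1/\mathfrak r\). With \(\mathfrak r_T=\mathsf Q_T\ge0\) it has a global nonnegative bounded solution: it is identically zero if \(\mathsf Q_T=0\), and otherwise \(1/\mathfrak r\) solves a linear ODE whose solution remains positive.
\item Given \(\mathfrak r\), the \(\mathfrak s\)-equation is linear with bounded coefficients.
\end{itemize}
Substituting \eqref{eq:lq_pr_control_new} into \(dm^{\rm PR}=u^{\rm PR}dt\) gives a linear random ODE driven by \(\widehat\Theta\). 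The filter \(\widehat\Theta\) is the strong solution of a linear SDE in \(y\), and \(x^{\rm PR}\) solves a linear SDE. Hence the whole system has a unique strong solution.

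To identify this solution as the PR best response, take the ansatz \(\varpi_t=\mathfrak r_t(m_t-c)+\mathfrak s_t\widehat\Theta_t+\mathcal N\)-term. Then verify that the BSDE in \eqref{eq:lq_common_optimality} holds with \(\mathsf R_{\rm eff}u_t=\widehat\Theta_t-\varpi_t\):
\begin{itemize}
\item apply Itô to \(\mathfrak r(m-c)+\mathfrak s\widehat\Theta\);
\item match the \(dt\)-coefficients, which reproduces \eqref{eq:lq_pr_riccati_new};
\item the innovation term supplies \(d\mathcal N\).
\end{itemize}
By the uniqueness part of \cref{prop:lq_frozen_response}, this is the unique minimizer. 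Consistency \eqref{eq:lq_consistency_new} then follows from \eqref{eq:lq_projection}. Since \(u\) is \(\mathbb F^Y\)-adapted and \(m\) is the common input, \(\chi^u\) solves \(d\chi=[a(m-\chi)+u]dt\) with \(\chi_0=m_0\), so \(\chi=m\) by uniqueness for this linear ODE. Also \(\overline u=u\) holds because \(u\) is \(\mathcal F_t^Y\)-measurable.

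\textbf{WPR coefficient equations.} Start from \eqref{eq:lq_wpr_exact_new}, derived by solving the linear adjoint backward equation explicitly and conditioning on \(y_t\). This is a linear equation \((\mathsf R_{\rm eff}I+\mathcal K)u=f\) on the closed Hilbert space \(\mathcal U_{\rm WPR}^{\rm LQ}\).

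Existence and uniqueness come from strict monotonicity. The operator \(\mathcal K\) is the conditional projection onto \(\sigma(y_t)\) of the Fréchet derivative of the convex quadratic part of the cost, so \(\langle\mathcal Ku,u\rangle\ge0\) by \eqref{eq:lq_strict_convexity}. Lax--Milgram then gives a unique solution. This is equivalent to the unique minimizer of \cref{prop:lq_frozen_response}.

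Gaussianity gives the affine form. Plug in the affine ansatz \eqref{eq:lq_wpr_affine_new}, and use \(\mathbb E[y_s-\overline y_s\mid y_t]=\mathsf C^{yy}(s,t)(\mathsf V_t^y)^{-1}(y_t-\overline y_t)\) and \(\mathbb E[\Theta_t\mid y_t]=\overline\Theta_t+\mathsf C_t^{\Theta y}(\mathsf V_t^y)^{-1}(y_t-\overline y_t)\). Separating constant and linear parts in \(y_t-\overline y_t\) gives \eqref{eq:lq_wpr_mean_new}--\eqref{eq:lq_wpr_gain_new}. The covariance formulas \eqref{eq:lq_moment_odes_new}--\eqref{eq:lq_cross_covariance_new} are routine.

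\textbf{Main obstacle.} The hard step is showing that \eqref{eq:lq_wpr_gain_new} is uniquely solvable in the weighted space \(L^2((0,T),e^{-at}\mathsf V_t^ydt)\), given the singularity \(\mathsf V_0^y=0\). My plan is to avoid analyzing this Fredholm--Volterra equation directly:
\begin{itemize}
\item The affine subspace \(\{\mathsf b_t+\mathsf k_t(y_t-\overline y_t)\}\) is closed in \(\mathcal U_{\rm WPR}^{\rm LQ}\), with norm \(\int(\mathsf b_t^2+\mathsf k_t^2\mathsf V_t^y)dt\). The weight \(e^{-at}\) is equivalent to \(1\) on \([0,T]\).
\item The Lax--Milgram solution lies in this subspace, because the operator maps affine controls to affine controls and the right side is affine. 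Concretely, solve the restricted coercive problem on the subspace and check that the residual vanishes.
\item So \((\mathsf b,\mathsf k)\) exists. Uniqueness follows because any solution pair defines a control solving the full equation, and the map from the pair to the control is injective in this norm.
\end{itemize}

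\textbf{WPR closed loop and consistency.} The system \eqref{eq:lq_wpr_closed_new} is linear with \(L^2\) coefficients, so strong existence and uniqueness follow as in the PR case. Consistency holds because \(u_t\) is \(\sigma(y_t)\subseteq\mathcal F_t^Y\)-measurable, so \(\overline u=u\), and \(\chi=m\) by the same projection argument from \eqref{eq:lq_projection}.
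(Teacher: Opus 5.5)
Your plan has a genuine gap at the step that carries the whole WPR part. You claim $\langle\mathcal K u,u\rangle\ge0$ in the unweighted inner product of $\mathcal U_{\rm WPR}^{\rm LQ}$, on the grounds that $\mathcal K$ is the projected Fr\'echet derivative of the convex quadratic part of the cost. That justification does not hold. Equation \eqref{eq:lq_wpr_exact_new} is not the first-order condition of a best response to a frozen input. It is the fixed-point equation obtained after substituting $m=m_0+\int_0^\cdot u_s\,ds$ and $\overline u=u$, and that substitution destroys the gradient structure. You can see this directly:
the full operator carries $\mathsf R_{\rm eff}$ rather than $\mathsf R_u$;
$\mathsf Q_x$ drops out altogether, so \eqref{eq:lq_strict_convexity} says nothing about $\mathcal K$;
and the terminal kernel $e^{-a(T-t)}\mathsf Q_T$ is not symmetric in $(s,t)$.

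In the unweighted inner product the form is in fact indefinite for admissible parameters. Take $\mathsf q_{ux}=0$, $\mathsf R_u=\mathsf Q_x=1$, $\mathsf R_c=0$, $T=1$, $a=10$, $\mathsf Q_T=100$, and the deterministic (hence WPR-admissible) control $v=1$ on $[0,\tfrac12]$, $v=-\tfrac12$ on $(\tfrac12,1]$. Then $(\mathcal Kv)_t=25e^{-10(1-t)}$ and $\langle(\mathsf R_{\rm eff}I+\mathcal K)v,v\rangle\approx0.625-1.225<0$. So Lax--Milgram is not available as you set it up, and the same defect undermines your ``restricted coercive problem'' on the affine subspace. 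Appealing to \cref{prop:lq_frozen_response} does not repair this: uniqueness of the best response to a fixed $(m,\overline u)$ is not uniqueness of a fixed point.

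The missing idea is the weighted inner product $\langle v,w\rangle_a:=\mathbb E\int_0^Te^{-at}v_tw_t\,dt$, which is what the paper uses. It gives an equivalent norm, but what matters is that it changes the bilinear form itself; norm equivalence alone transfers no lower bound for a non-symmetric form. Let $(\mathsf P_{\rm W}z)_t=\mathbb E[z_t\mid y_t]$, and let $\varpi^v$ be the linear adjoint driven by $v$ with terminal value $\mathsf Q_T\mathfrak a_T(v)$, where $\mathfrak a_t(v)=\int_0^tv_s\,ds$, so that $\mathcal Kv=\mathsf P_{\rm W}\varpi^v$. Since the weight is deterministic, $\mathsf P_{\rm W}$ stays self-adjoint for $\langle\cdot,\cdot\rangle_a$. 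The product rule for $e^{-at}\mathfrak a_t(v)\varpi_t^v$ then yields \eqref{eq:lq_wpr_coercivity_new}:
\[
 \langle(\mathsf R_{\rm eff}I+\mathcal K)v,v\rangle_a
 =\mathsf R_{\rm eff}\|v\|_a^2
 +e^{-aT}\Bigl(\mathsf Q_T+\frac{\mathsf q_{ux}}2\Bigr)\mathbb E[\mathfrak a_T(v)^2]
 +\frac{a\mathsf q_{ux}}2\,\mathbb E\int_0^Te^{-at}\mathfrak a_t(v)^2dt
 \ge\mathsf R_{\rm eff}\|v\|_a^2 .
\]
The weight turns the terminal contribution into $e^{-aT}\mathsf Q_T\mathbb E[\mathfrak a_T(v)^2]$ and the running one into $\mathsf q_{ux}\mathbb E\int_0^Te^{-at}\mathfrak a_t(v)\,d\mathfrak a_t(v)\ge0$.

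With this in place, the rest of your WPR argument goes through and is a legitimate, slightly leaner variant of the paper's Hermite-chaos decomposition. You only need that Gaussian conditioning maps affine controls to affine controls; the integrals converge since $|\mathsf C^{yy}(s,t)|\le(\mathsf V_s^y\mathsf V_t^y)^{1/2}$ and $\mathsf k\in L^2(\mathsf V_s^y\,ds)$. The residual of the Lax--Milgram solution on the affine subspace then lies in that subspace and is $\langle\cdot,\cdot\rangle_a$-orthogonal to it, hence vanishes.

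On the PR side, your verification-plus-sufficiency argument covers what the statement literally asks. It does not, however, show that every solution of the PR optimality system \eqref{eq:lq_common_optimality} coincides with the ansatz. The paper gets this from the martingale $\varepsilon_t=\varpi_t-\mathfrak r_t(m_t-c)-\mathfrak s_t\widehat\Theta_t$ with zero terminal value. The same weighted coercivity, with the $\mathbb F^Y$-projection in place of $\mathsf P_{\rm W}$, would also do it.
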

\begin{proof}
See Appendix~\ref{app:proof_lq_pr_wpr_new}.
\end{proof}

Put \(g_t=he^{-\varrho_\Theta t}\).  The posterior variances of the centered initial factor under PR and WPR are
\begin{equation}
 \mathsf v_{\Theta_0}^{\rm PR}(t)=\left((\mathsf V_0^\Theta)^{-1}
 +\frac{\int_0^t g_s^2ds}{\sigma_y^2}\right)^{-1},
 \qquad
 \mathsf v_{\Theta_0}^{\rm WPR}(t)=\left((\mathsf V_0^\Theta)^{-1}
 +\frac{(\int_0^t g_sds)^2}{\sigma_y^2t}\right)^{-1}.
 \label{eq:lq_information_gap_new}
\end{equation}
Strict Cauchy--Schwarz gives \(\mathsf v_{\Theta_0}^{\rm PR}(t)<\mathsf v_{\Theta_0}^{\rm WPR}(t)\) for every \(t>0\), so the resulting controls and conditional mean paths may differ.

\begin{remark}[Relation to the general model]
This example retains the WPR information structure of Sections~2--5, with the path posterior and extended WPR belief measure reduced to Gaussian conditional moments.  The mean-control term and unbounded action space are handled directly by the LQ first-order condition in square-integrable Hilbert spaces.
\end{remark}

\subsection{Numerical experiment and discussion}
\label{subsec:lq_numerics}

We next compare the equilibrium behavior under PR and WPR numerically.  The parameters are
\begin{equation*}
\begin{gathered}
T=4,\ a=0.8,\ \mathsf q_{ux}=0.15,\quad
\mathsf R_u=0.35,\ \mathsf R_c=0.05,\quad
\mathsf Q_x=0.8,\ \mathsf Q_T=1,\\
\varrho_\Theta=0.8,\ h=1,\ \sigma_y=1.8,\ \sigma=0.4,\quad
\overline\Theta_0=1.2,\ \mathsf V_0^\Theta=20,\ c=0,\\
x_0\sim N(-0.8,0.2),\qquad m_0=-0.8.
\end{gathered}
\end{equation*}
Here \(\mathsf R_u\mathsf Q_x=0.28>\mathsf q_{ux}^{\,2}=0.0225\).  The cost summary uses 8000 coupled Monte Carlo replications.

\begin{figure}[!htbp]
 \centering
 \begin{minipage}{.38\textwidth}
  \centering
  \includegraphics[width=\linewidth]{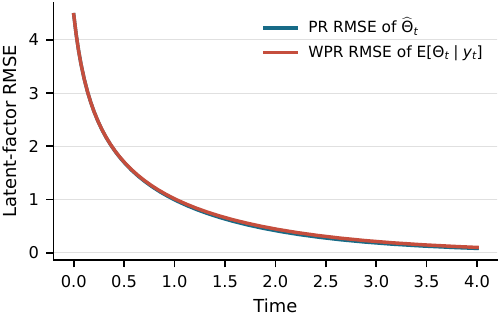}
 \end{minipage}\hfill
 \begin{minipage}{.38\textwidth}
  \centering
  \includegraphics[width=\linewidth]{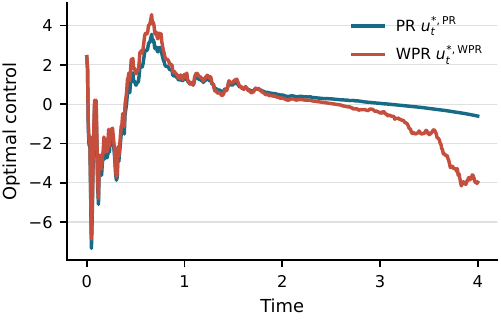}
 \end{minipage}
 \caption{Information loss and the feedback response.  Left: root-mean-square errors of the PR and WPR estimators.  Right: one coupled realization of \(u_t^{*,{\rm PR}}\) and \(u_t^{*,{\rm WPR}}\).}
 \label{fig:lq_estimation_control_new}
\end{figure}
\FloatBarrier

PR uses the observation history and has slightly smaller hidden-factor error (integrated RMSEs \(1.131\) and \(1.140\)).  The distinct feedback laws amplify this gap: the coupled controls have root-mean-square difference \(1.066\).

\begin{figure}[!htbp]
 \centering
 \begin{minipage}{.34\textwidth}
  \centering
  \includegraphics[width=\linewidth]{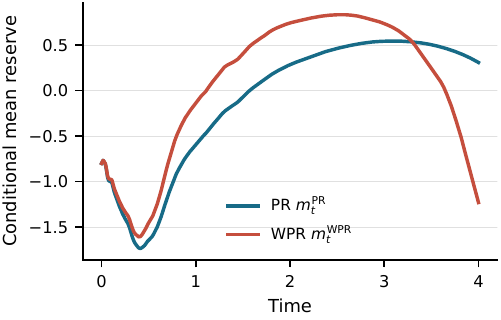}
 \end{minipage}\hfill
 \begin{minipage}{.34\textwidth}
  \centering
  \includegraphics[width=\linewidth]{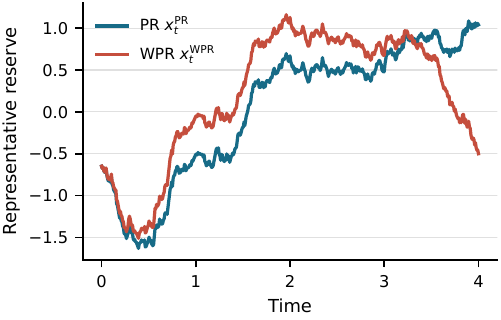}
 \end{minipage}
 \caption{Transmission from the feedback gap to reserves.  Left: conditional mean reserves.  Right: representative reserves under the same private-noise realization.}
 \label{fig:lq_mean_state_new}
\end{figure}
\FloatBarrier

The feedback gap accumulates in the conditional mean through \(dm_t^I=u_t^I dt\).  Under the coupled initial reserve and private noise, the deviations from the respective conditional means, \(x_t^{\rm PR}-m_t^{\rm PR}\) and \(x_t^{\rm WPR}-m_t^{\rm WPR}\), coincide; the uncentered state paths remain separated by the conditional-mean gap.

\begin{figure}[!htbp]
 \centering
 \includegraphics[width=.38\textwidth]{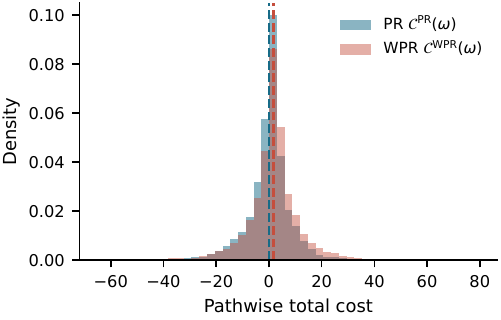}
 \caption{Monte Carlo distributions of the pathwise equilibrium costs \(\mathcal C^{\rm PR}(\omega)\) and \(\mathcal C^{\rm WPR}(\omega)\); dashed lines mark the sample means.}
 \label{fig:lq_cost_new}
\end{figure}
\FloatBarrier

The estimated mean costs are \(-0.031\) under PR and \(1.785\) under WPR. WPR uses only the current observation and thus lacks the temporal information with which PR responds to latent liquidity pressure and associated reserve deviations, raising its average cost.

\section{Conclusion}

This paper studied partially observed mean field games without perfect recall. Control uses the generally non-nested family \(\mathcal G_t^I=\sigma(y_t)\), while the conditional population law follows the observation filtration \(\mathbb F^Y\). Unequal histories impede joint equilibrium closure. We encode the environment by a deterministic compatible joint law \(P\) of the state, random mean field term, and driving variables, retaining dependence without enlarging the agent's control information.

For fixed \(P\), compactness of the strategy class and continuity of the likelihood-weighted cost yield an optimizer; Girsanov's theorem gives the WPR maximum principle, and the WPR belief measure defines a strongly convex conditional Hamiltonian with a Lipschitz selector. The joint path posterior supplies the weak Kushner-Stratonovich coefficients. Uniform moment and time-modulus bounds, lower semicontinuity, and closure under mixtures give a compact convex law set; selector and filter stability make the recursive response continuous. Schauder-Tychonoff then yields a weak WPR equilibrium. For fixed equilibrium law and feedback, pathwise uniqueness and the compatible Yamada-Watanabe theorem give a strong realization.

The LQ interbank model illustrates these information effects. PR gives Kalman-Bucy feedback, whereas WPR leads to a Fredholm-Volterra equation whose Gaussian solution is affine in current observation. Numerics compare estimation, feedback, and cost. However, strong realization requires strong stability estimates. Outside the Gaussian setting, the joint path posterior is generally difficult to close in finite dimensions, which complicates computation. Future work will weaken these stability conditions, study finite-agent approximations, and develop tractable schemes for nonlinear WPR models. We also plan to move beyond the partially observed state setting and examine WPR under more general non-nested information structures.

\appendix

\section{Proofs for the Reference-Measure Formulation}
\label{app:reference_measure_proofs}

\subsection{Proof of Theorem \ref{thm:ref_solution}}
\label{app:proof_ref_solution}

\begin{proof}
Assumption (H2) gives a progressively measurable, bounded, and pathwise Lipschitz diffusion coefficient, so the standard successive-approximation argument gives a pathwise unique strong solution; see \cite[Theorem 16.3.11]{cohen2015stochastic}.  Put \(d=d_x+d_y\), \(K_\Sigma=\sup_{t,X}\|\Sigma(t,X)\|\), and \(M=\int_0^\cdot\Sigma(s,X)dV_s\).  The BDG inequality gives the moment bound below.  Since \(\langle M^i\rangle_T\le K_\Sigma^2T\), the Dambis--Dubins--Schwarz theorem \cite[Chapter V]{revuz1999continuous} and the reflection principle \cite[Chapter III]{revuz1999continuous} give the tail bound, and hence
\[
\begin{gathered}
    \mathbb E^{\mathbb P}[\|X\|_T^{2p}]
    \le C_p\left(1+\mathbb E^{\mathbb P}[|X_0|^{2p}]\right),
    \qquad p\ge1,\\
    \mathbb P(\|M\|_T>r)
    \le 4d\exp\!\left(-\frac{r^2}{2dK_\Sigma^2T}\right),\qquad r>0,\\
    \mathbb E^{\mathbb P}[e^{\beta\|M\|_T^2}]<\infty
    \quad\text{for }\beta<(2dK_\Sigma^2T)^{-1},\\
    \mathbb E^{\mathbb P}[e^{\alpha\|X\|_T^2}]
    \le
    \mathbb E^{\mathbb P}[e^{4\alpha|X_0|^2}]^{1/2}
    \mathbb E^{\mathbb P}[e^{4\alpha\|M\|_T^2}]^{1/2}<\infty
\end{gathered}
\]
when \(4\alpha<\min\{\alpha_0,(2dK_\Sigma^2T)^{-1}\}\); the last line uses \(\|X\|_T^2\le2|X_0|^2+2\|M\|_T^2\) and H\"older's inequality.  The first line is finite by Assumption (H1)(ii), and taking \(\alpha=\alpha_X\) proves \eqref{eq:reference_exponential_moment}.
\end{proof}

\subsection{Proof of Theorem \ref{thm:girsanov}}
\label{app:proof_girsanov}

\begin{proof}
Let \(K_U=\sup_{a\in\mathbb U}|a|\), \(K_{\Sigma^{-1}}=\|\Sigma^{-1}\|_\infty\), and \(C_\mu=\sup_{t\le T}\mathcal W_2(\mu_t,\delta_0)\).  Assumptions (H2)--(H3) give
\begin{equation}
\begin{gathered}
    |\theta(t)|\le a_\mu+b\|X\|_T,\qquad
    a_\mu:=K_{\Sigma^{-1}}
    \big(C_{F_0}(1+C_\mu)+C_{F_1}K_U\big),
    \quad b:=K_{\Sigma^{-1}}C_{F_0},\\
    \frac12\int_0^T|\theta(t)|^2dt
    \le Ta_\mu^2+Tb^2\|X\|_T^2.
\end{gathered}
    \label{eq:appendix_theta_bound}
\end{equation}
Since \(Tb^2=C_4(T)<\alpha_X\), \eqref{eq:reference_exponential_moment} verifies Novikov's condition. Girsanov's theorem \cite{karatzas1991brownian} then makes \(V^u\) Brownian under \(\mathbb P^u\), and substitution gives \eqref{eq:augmented_dyn}.

For weak uniqueness, fix the feedback strategy \(\phi\) and let \((\bar X,\bar V,\bar{\mathbb P})\) be another weak solution with the same initial law and fixed flow, with \(\bar u_t=\phi(t,\bar y_t)\).  Write \(\bar\theta_t=\Sigma^{-1}(t,\bar X) F(t,\bar X,\bar u_t,\mu_t)\), and set
\begin{align*}
    \tau_n&:=\inf\left\{t:\int_0^t|\bar\theta_s|^2ds\ge n\right\}\wedge T,\\
    Z_t^n&:=
    \mathcal E\!\left(-\int_0^{\cdot\wedge\tau_n}
    \bar\theta_s^\top d\bar V_s\right)_t.
\end{align*}
The stopped exponentials are martingales.  Define \(d\mathbb Q_n=Z_T^n\,d\bar{\mathbb P}\).  Since \(\mathbb E^{\bar{\mathbb P}}[Z_T^n\mid\mathcal F_0]=1\), this change of measure preserves the initial law.  Under \(\mathbb Q_n\), \(\widetilde V^n\) below is Brownian, and BDG, boundedness of \(\Sigma\), and the stopped growth estimate give
\[
\begin{gathered}
    \widetilde V_t^n
    :=\bar V_t+\int_0^{t\wedge\tau_n}\bar\theta_sds,\qquad
    \bar X_{t\wedge\tau_n}
    =\bar X_0+\int_0^{t\wedge\tau_n}
      \Sigma(s,\bar X)d\widetilde V_s^n,\\
    \sup_n\mathbb E^{\mathbb Q_n}
    [\|\bar X\|_{\tau_n}^2]
    \le C\left(1+\mathbb E[|X_0|^2]\right)<\infty,\\
    \sup_n\mathbb E^{\bar{\mathbb P}}[Z_T^n\log Z_T^n]
    =\frac12\sup_n\mathbb E^{\mathbb Q_n}\!\left[
      \int_0^{\tau_n}|\bar\theta_s|^2ds\right]<\infty.
\end{gathered}
\]
Thus \(Z_T^n\to Z_T\) almost surely; the entropy bound makes the sequence uniformly integrable.  The convergence therefore holds in \(L^1\), with \(\mathbb E^{\bar{\mathbb P}}[Z_T]=1\).  Under \(d\mathbb Q=Z_Td\bar{\mathbb P}\), \(\widetilde V\) below is Brownian and \(\bar X\) solves the reference equation, whose law is unique by \cref{thm:ref_solution}.  Moreover,
\[
\begin{gathered}
    \widetilde V_t=\bar V_t+\int_0^t\bar\theta_sds,\\
    \left.\frac{d\bar{\mathbb P}}{d\mathbb Q}
    \right|_{\mathcal F_T}
    =\mathcal E\!\left(\int_0^\cdot
      \bar\theta_s^\top d\widetilde V_s\right)_T.
\end{gathered}
\]
The inverse-density identity follows from Girsanov.  Since \(\Sigma\) is nonsingular, the reference equation yields
\[
    \widetilde V_t=\int_0^t\Sigma^{-1}(s,\bar X)d\bar X_s.
\]
Hence, for fixed \(\mu\) and \(\phi\), the integrand \(\bar\theta\) and inverse density \(L_T:=d\bar{\mathbb P}/d\mathbb Q\) are measurable functionals of the reference state path \(\bar X\).  Inverse Girsanov weighting of the unique reference law therefore determines a unique controlled joint law.
\end{proof}

\section{Proofs for the Fixed-P Variational System}
\label{app:wpr_smp_proofs}

\subsection{Proof of Lemma \ref{lem:fixed_P_controlled_moment}}
\label{app:proof_fixed_P_controlled_moment}

\begin{proof}
By \eqref{eq:environment_preservation}, the \(p_0\)-moments of \((X_0^P,\eta)\) are independent of \(\phi\) and finite.  Compactness of \(\mathbb U\) and Assumption (H3) give \( |F(t,X^P,\phi(t,y_t^P),\eta_t)| \le C(1+\|X^P\|_t+\mathcal W_2(\eta_t,\delta_0)) \) uniformly in \(\phi\).  Since \(\mathcal W_2^{p_0}(\eta_t,\delta_0)\le M_{p_0}(\eta_t)\), the BDG inequality and boundedness of \(\Sigma\) yield, for \(t\le T\),
\[
    \mathbb E^{\mathbb P^{P,\phi}}[\|X^P\|_t^{p_0}]
    \le C\!\left(1+\mathbb E^{\mathbb P^{P,\phi}}\!\left[
    |X_0^P|^{p_0}+\sup_{s\le T}M_{p_0}(\eta_s)\right]
    +\int_0^t\mathbb E^{\mathbb P^{P,\phi}}[\|X^P\|_r^{p_0}]\,dr\right).
\]
Gronwall's inequality and \eqref{eq:fixed_P_input_moment} prove \eqref{eq:fixed_P_controlled_moment}.
\end{proof}

\subsection{Proof of Theorem \ref{thm:adjoint}}
\label{app:proof_adjoint}

\begin{proof}
Fix \(\psi\in\Phi_{ad}\), write \(\bar\phi=\bar\phi^P\), \(\Delta\phi_t=\psi(t,y_t^P)-\bar\phi(t,y_t^P)\), and set \(\phi^\varepsilon=(1-\varepsilon)\bar\phi+\varepsilon\psi\) for \(\varepsilon\in[0,1]\).  By convexity, \(\phi^\varepsilon\in\Phi_{ad}\). Under \(\mathbb P^{P,\bar\phi}\), set \( A_t:=\Sigma^{-1}(t,X^P)F_1(t,X^P,\eta_t)\Delta\phi_t \), which is bounded.  Then
\begin{equation}
\begin{gathered}
    Z_t^\varepsilon
    :=
    \left.
    \frac{d\mathbb P^{P,\phi^\varepsilon}}
         {d\mathbb P^{P,\bar\phi}}
    \right|_{\mathcal F_t^P}
    =
    \mathcal E\!\left(
    \varepsilon\int_0^\cdot A_t^\top
    dV_t^{P,\bar\phi}\right)_t,\\
    \sup_{0\le\varepsilon\le1}
    \mathbb E^{\mathbb P^{P,\bar\phi}}\!
    \left[\sup_{t\le T}|Z_t^\varepsilon|^4\right]<\infty,\qquad
    \mathbb E^{\mathbb P^{P,\bar\phi}}\!
    \left[\sup_{t\le T}|Z_t^\varepsilon-1|^4\right]
    \le C\varepsilon^4,\\
    \frac{Z_T^\varepsilon-1}{\varepsilon}
    \longrightarrow
    \int_0^T A_t^\top dV_t^{P,\bar\phi}
    \quad\text{in }L^2(\mathbb P^{P,\bar\phi}).
\end{gathered}
\label{eq:fixed_P_density_derivative}
\end{equation}
The last two lines follow from standard exponential-martingale estimates and the BDG inequality.

Set
\begin{equation}
\begin{gathered}
    C^\varepsilon
    :=
    \int_0^T
    l(t,X^P,\phi^\varepsilon(t,y_t^P),\eta_t)dt
    +g(X_T^P),\\
    \frac{C^\varepsilon-C^0}{\varepsilon}
    \longrightarrow
    \int_0^T
    \left\langle
    \nabla_ul(t,X^P,\bar\phi(t,y_t^P),\eta_t),
    \psi(t,y_t^P)-\bar\phi(t,y_t^P)
    \right\rangle dt
    \quad\text{in }L^2(\mathbb P^{P,\bar\phi}).
\end{gathered}
\label{eq:fixed_P_cost_derivative}
\end{equation}
Here \(C^0\in L^2\), and the convergence follows from (H0), (H4), \cref{lem:fixed_P_controlled_moment}, the mean-value formula, and dominated convergence.

Let \(M_t^P:=\mathbb E^{\mathbb P^{P,\bar\phi}} [C^0\mid\mathcal F_t^P]\). Its Kunita--Watanabe decomposition with respect to \(V^{P,\bar\phi}\) is
\begin{equation}
    M_t^P=M_0^P+\int_0^t(q_s^P)^\top
    dV_s^{P,\bar\phi}+N_t^P,
    \qquad
    \langle N^P,V^{P,\bar\phi,j}\rangle_t=0 .
    \label{eq:fixed_P_KW}
\end{equation}
Thus \(p_t^P:=M_t^P-\int_0^t l(s,X^P,\bar\phi(s,y_s^P),\eta_s)ds\) satisfies \eqref{eq:adjoint_bsde}, with the asserted square integrability.

Moreover, \(J_P(\phi^\varepsilon) =\mathbb E^{\mathbb P^{P,\bar\phi}}[Z_T^\varepsilon C^\varepsilon]\). The two \(L^2\) limits and strong orthogonality in \eqref{eq:fixed_P_KW} give
\[
    0\le\left.\frac{d}{d\varepsilon}J_P(\phi^\varepsilon)
    \right|_{\varepsilon=0+}
    =\mathbb E^{\mathbb P^{P,\bar\phi}}\!\int_0^T
    \left\langle
    \nabla_u\mathcal H
    (t,X^P,\bar\phi(t,y_t^P),\eta,q_t^P),
    \Delta\phi_t\right\rangle dt .
\]
By \eqref{eq:hamiltonian}, this is \eqref{eq:variational_ineq}.
\end{proof}

\subsection{Proof of Theorem \ref{thm:wpr_smp}}
\label{app:proof_wpr_variational}

\begin{proof}
Set \(\mathbb Q:=\mathbb P^{P,\bar\phi^P}\), \(Y_t:=y_t^P\), and
\[
 G(t,\omega,u):=\nabla_u\mathcal H
 (t,X^P(\omega),u,\eta(\omega),q_t^P(\omega)).
\]
Uniformly in \(u\), \( |G(t,\omega,u)| \le C(1+|q_t^P|+\|X^P\|_t+\mathcal W_2(\eta_t,\delta_0)) \), and this bound is integrable by \cref{lem:fixed_P_controlled_moment}. Disintegrate \(dt\otimes d\mathbb Q\) through \((t,\omega)\mapsto(t,Y_t(\omega))\).  On a countable dense subset of \(\mathbb U\), this gives jointly measurable conditional versions outside one common null set.  Continuity of \(G\) in \(u\) and conditional dominated convergence extend the same version to every \(u\in\mathbb U\).  It may therefore be evaluated at the random action \(\bar\phi^P(t,Y_t)\).  Since \(\psi(t,Y_t)-\bar\phi^P(t,Y_t)\) is \(\sigma(Y_t)\)-measurable, the tower property gives, for \(dt\)-almost every \(t\),
\begin{align*}
    &\mathbb E^{\mathbb P^{P,\bar\phi^P}}\!\left[
    \left\langle
    \nabla_u\mathcal H
    (t,X^P,\bar\phi^P(t,y_t^P),\eta,q_t^P),
    \psi(t,y_t^P)-\bar\phi^P(t,y_t^P)
    \right\rangle\right]\\
    &\quad=
    \mathbb E^{\mathbb P^{P,\bar\phi^P}}\!\left[
    \left\langle
    \nabla_u\widehat{\mathcal H}^{P}
    (t,y_t^P,\bar\phi^P(t,y_t^P)),
    \psi(t,y_t^P)-\bar\phi^P(t,y_t^P)
    \right\rangle\right].
\end{align*}
Integrating in time and using \eqref{eq:variational_ineq} proves \eqref{eq:smp_condition}.
\end{proof}

\section{Proofs for the Fixed-P Best Response}
\label{app:best_response_proofs}

\subsection{Proof of Lemma \ref{lem:pointwise_hamiltonian_gradient}}
\label{app:proof_pointwise_hamiltonian_gradient}

\begin{proof}
Put \( G_t(\xi,u):=\nabla_u\mathcal H (t,\xi^X,u,\xi^\mu,\xi^q) \). Assumptions (H0) and (H2)--(H4) give {\small
\[
\begin{gathered}
    D_t(\xi,\zeta)
      :=\|\xi^X-\zeta^X\|_t+
        \sup_{r\le t}\mathcal W_2(\xi_r^\mu,\zeta_r^\mu),\\
    A_t(\xi)
      :=F_1(t,\xi^X,\xi_t^\mu)^\top
        \Sigma^{-1}(t,\xi^X)^\top,\qquad
    |A_t(\xi)|\le C_A,\qquad
    |A_t(\xi)-A_t(\zeta)|\le L_A D_t(\xi,\zeta),\\
    |\nabla_ul(t,\xi^X,u,\xi_t^\mu)
      -\nabla_ul(t,\zeta^X,u,\zeta_t^\mu)|
      \le L_{\nabla l}D_t(\xi,\zeta).
\end{gathered}
\]
}
Thus, using \(G_t(\xi,u)=A_t(\xi)\xi^q+ \nabla_ul(t,\xi^X,u,\xi_t^\mu)\),
\[
\begin{gathered}
    |G_t(\xi,u)-G_t(\zeta,u)|
    \le C_A|\xi^q-\zeta^q|
      +(L_A|\zeta^q|+L_{\nabla l})D_t(\xi,\zeta),\\
    \le L_{\mathcal H}c_{\mathcal H,t}(\xi,\zeta).
\end{gathered}
\]
This proves \eqref{eq:pointwise_hamiltonian_gradient}.
\end{proof}

\subsection{Proof of Lemma \ref{lem:conditional_gradient_stability}}
\label{app:proof_conditional_gradient_stability}

\begin{proof}
The growth assumptions and (H6)(ii) ensure that \(\Psi^P(t,y,u)\) is finite.  Fix \(t,y,y',u\), and choose a coupling \(\Gamma_\varepsilon\) of \(\widehat\pi_{t|y}^{P,*}\) and \(\widehat\pi_{t|y'}^{P,*}\) within \(\varepsilon\) of the transport cost in \eqref{eq:hamiltonian_gradient_cost}.  By \cref{lem:pointwise_hamiltonian_gradient} and (H6)(i),
\begin{align*}
    |\Psi^P(t,y,u)-\Psi^P(t,y',u)|
    &\le L_{\mathcal H}
    \int c_{\mathcal H,t}(\xi,\zeta)
    \Gamma_\varepsilon(d\xi,d\zeta)\\
    &\le L_{\mathcal H}
    \bigl(C_\pi|y-y'|+\varepsilon\bigr).
\end{align*}
Letting \(\varepsilon\downarrow0\) and adding (H6)(iv) proves \eqref{eq:H6_observation_gradient_stability}.
\end{proof}

\subsection{Proof of Theorem \ref{thm:fixed_P_existence}}
\label{app:proof_fixed_P_existence}

\begin{proof}
\textit{Step 1: existence.} Let \(\{\phi^n\}_{n\ge1}\subset\Phi_{ad}\) be a minimizing sequence. Arzel\`a--Ascoli and a diagonal argument, using uniform boundedness and equicontinuity on compact sets, give a subsequence and \(\bar\phi^P\in\Phi_{ad}\) such that
\begin{equation}
\begin{gathered}
    \phi^n\longrightarrow\bar\phi^P
    \quad\text{locally uniformly},\\
    \sup_{t\le T}
    |\phi^n(t,y_t^P)-\bar\phi^P(t,y_t^P)|
    \longrightarrow0
    \quad\mathbb P^{P,\bar\phi^P}\text{-a.s.}
\end{gathered}
    \label{eq:fixed_P_strategy_convergence}
\end{equation}
The second line uses the compact range of each continuous path \(y^P\). Under \(\mathbb P^{P,\bar\phi^P}\), set
\begin{equation}
\begin{split}
    Z_t^n
    &:=
    \left.
    \frac{d\mathbb P^{P,\phi^n}}
         {d\mathbb P^{P,\bar\phi^P}}
    \right|_{\mathcal F_t^P}
    =
    \mathcal E\!\left(
    \int_0^\cdot(\Delta\theta_s^n)^\top
    dV_s^{P,\bar\phi^P}\right)_t,\\
    \Delta\theta_t^n
    &:=
    \Sigma^{-1}(t,X^P)F_1(t,X^P,\eta_t)
    [\phi^n(t,y_t^P)-\bar\phi^P(t,y_t^P)] .
\end{split}
    \label{eq:fixed_P_relative_likelihood}
\end{equation}
The kernels \(\Delta\theta^n\) are uniformly bounded and converge to zero in every finite \(L^r(dt\otimes d\mathbb P^{P,\bar\phi^P})\). Exponential-martingale estimates, BDG, and Cauchy--Schwarz give
\begin{equation}
\begin{gathered}
    \sup_n\mathbb E^{\mathbb P^{P,\bar\phi^P}}\!\left[
      \sup_{t\le T}|Z_t^n|^4\right]<\infty,\\
    \mathbb E^{\mathbb P^{P,\bar\phi^P}}\!\left[
      \sup_{t\le T}|Z_t^n-1|^2\right]\longrightarrow0 .
\end{gathered}
    \label{eq:fixed_P_relative_likelihood_convergence}
\end{equation}

Set \(C^\phi:=\int_0^T l(t,X^P,\phi(t,y_t^P),\eta_t)dt+g(X_T^P)\). The quadratic cost bound, the mean-value formula, \eqref{eq:fixed_P_strategy_convergence}, and \cref{lem:fixed_P_controlled_moment} give the following estimates, with all norms taken under \(\mathbb P^{P,\bar\phi^P}\):
\[
\begin{gathered}
    \sup_n\|C^{\phi^n}\|_{L^2(\mathbb P^{P,\bar\phi^P})}<\infty,
    \qquad
    C^{\phi^n}\longrightarrow C^{\bar\phi^P}
    \quad\text{in }L^2(\mathbb P^{P,\bar\phi^P}),\\
    |J_P(\phi^n)-J_P(\bar\phi^P)|
    \le \|Z_T^n-1\|_2\|C^{\phi^n}\|_2
      +\|C^{\phi^n}-C^{\bar\phi^P}\|_1\longrightarrow0.
\end{gathered}
\]
The last line uses \(J_P(\phi^n)=\mathbb E^{\mathbb P^{P,\bar\phi^P}} [Z_T^nC^{\phi^n}]\) and Cauchy--Schwarz, and proves \eqref{eq:fixed_P_optimizer}.

\textit{Step 2: pointwise feedback.} Under (H6)(i), (ii), and (iv), the selected extended WPR belief measure and (H0) make \(\widehat{\mathcal H}^{P}\) jointly Borel, finite, and continuous in \(u\).  Compactness of \(\mathbb U\) and the measurable maximum theorem give a Borel minimizer.  The affine drift and \(\lambda\)-strong convexity of \(l\) make it unique; denote it by \(\phi^P(t,y)\).  For \(a=\phi^P(t,y)\) and \(b=\phi^P(s,y')\), the two variational inequalities and strong monotonicity of \(\Psi^P(t,y,\cdot)\) give
\begin{equation}
\begin{gathered}
    \lambda|a-b|^2
    \le\left\langle
    \Psi^P(s,y',b)-\Psi^P(t,y,b),a-b
    \right\rangle,\\
    |\phi^P(t,y)-\phi^P(s,y')|
    \le\frac{L_\Psi}{\lambda}
    (|t-s|+|y-y'|).
\end{gathered}
    \label{eq:fixed_P_selector_estimate}
\end{equation}
The second line follows from \cref{lem:conditional_gradient_stability}.  Hence \(\phi^P\in\Phi_{ad}\) because \(L_\Phi\ge L_\Psi/\lambda\), and \cref{thm:lipschitz} follows.

\textit{Step 3: identification of the selected optimal response.} Use \(\psi=\phi^P\) in \eqref{eq:smp_condition}.  Pointwise optimality of \(\phi^P\) and strong monotonicity give, with \(\delta_t:=\phi^P(t,y_t^P)-\bar\phi^P(t,y_t^P)\),
\[
\begin{gathered}
    \left\langle
    \Psi^P(t,y_t^P,\bar\phi^P(t,y_t^P)),
    \delta_t
    \right\rangle
    \le-\lambda|\delta_t|^2,\\
    \delta_t=0
    \quad dt\otimes d\mathbb P^{P,\bar\phi^P}\text{-a.e.}
\end{gathered}
\]
where the second line follows by integrating the first and applying \eqref{eq:smp_condition}. The relative Girsanov kernel therefore vanishes, so \(\mathbb P^{P,\phi^P}=\mathbb P^{P,\bar\phi^P}\), and the two strategies have the same optimal cost.  This identifies the synthesized selector with the selected optimizer.
\end{proof}

\section{Proofs for the Conditional Mean Field Law}
\label{app:conditional_mean_field_proofs}

\subsection{Proof of Lemma \ref{lem:joint_posterior_innovation}}
\label{app:proof_joint_posterior_innovation}

\begin{proof}
\textit{Step 1: one posterior process.} On the Polish space \(\mathsf E\), the moment bound supplies an integrable inf-compact function \(H\ge1+\|\xi\|_T^2+\sup_{s\le T}M_2(\zeta_s)\).  Choose a countable convergence-determining \(\mathcal D\subset C_b(\mathsf E)\) and a dense countable time set.  Ji\v{r}ina's theorem \cite{jirina1959regular,kallenberg2002foundations} gives conditional kernels at these times.  C\`adl\`ag regularization of \(\mathbb E^{\mathbb P^*}[g(Z)\mid\mathcal F_t^Y]\), \(g\in\mathcal D\), together with conditional tightness from \(H\), yields a unique weakly c\`adl\`ag optional kernel \(\Pi_t\).  It is the conditional law of \(Z\) given \(\mathcal F_t^Y\), and Portmanteau gives \(\Pi_t(H)<\infty\), hence \(m_t^Y=\Pi_t\circ e_t^{-1}\in\mathcal P_2\). For \(f\) in the statement, conditional Fubini gives
\[
 \mathbb E^{\mathbb P^*}\int_0^T\Pi_t(|f_t|)dt
 =\mathbb E^{\mathbb P^*}\int_0^T|f_t(Z)|dt<\infty.
\]
Equality of the optional and predictable \(dt\otimes d\mathbb P^*\)-completions gives the predictable version.

\textit{Step 2: predictable representation under the controlled probability measure.} Under \(\mathbb P_0^P\), \(B_t^P=\int_0^t\sigma_2^{-1}(s,y)dy_s\) generates \(\mathbb F^Y\), so this filtration has the \(B^P\)-representation property.  Let \(\overline\Lambda_t:=\mathbb E^{\mathbb P_0^P} [\Lambda_T^{P,\phi^P}\mid\mathcal F_t^Y]\) and \(\theta_t^B:=\sigma_2^{-1}(t,y)h_t^*(x^*,\eta)\).  After localization, the moment bound, optional-projection covariation identity, and Bayes' formula give
\[
    d\overline\Lambda_t
    =\overline\Lambda_t\beta_t^\top dB_t^P,
    \qquad
    \beta_t
    :=\mathbb E^{\mathbb P^*}[\theta_t^B\mid\mathcal F_{t-}^Y]
    =\sigma_2^{-1}(t,y)\widehat h_t .
\]
Since \(\mathcal F_{t-}^Y=\mathcal F_t^Y\) for \(t>0\), Girsanov makes \(\overline B_t:=B_t^P-\int_0^t\beta_sds\) an \(\mathbb F^Y\)-Brownian motion under \(\mathbb P^*\).  The quotient formula for \(M=(\overline\Lambda M)/\overline\Lambda\) transfers the representation property to \(\overline B\).

Finally,
\[
    \nu_t=\int_0^tO_s\,d\overline B_s,
    \qquad
    O_s:=a_s^{-1/2}\sigma_2(s,y),
\]
where \(O_s\) is predictable and \(O_sO_s^\top=I\).  Thus the transformation preserves both Brownian motion and predictable representation.
\end{proof}

\section{Proofs for the WPR-MFG Equilibrium Results}
\label{app:fixed_point_estimates}

\subsection{Proof of Lemma \ref{lem:closed_loop_input_stability}}
\label{app:proof_closed_loop_input_stability}

\begin{proof}
Fix \(\phi\in\Phi_{ad}\), set \(K_{\mathbb U}:=\sup_{u\in\mathbb U}|u|\), and write
\[
 \overline F_\phi(t,X,\eta)
 :=F\bigl(t,X,\phi(t,y_t),\eta_t\bigr).
\]
Assumptions (H1)(i), (H3), and (H5) give, uniformly in \(\phi\),
\begin{equation}
\begin{aligned}
 |\overline F_\phi(t,X,\eta)-\overline F_\phi(t,X',\eta')|
 &\le C\{\|X-X'\|_t+\mathcal W_2(\eta_t,\eta_t')\},\\
 |\overline F_\phi(t,X,\eta)|
 &\le C\{1+\|X\|_t+\mathcal W_2(\eta_t,\delta_0)\}.
\end{aligned}
\label{eq:closed_loop_drift_lipschitz}
\end{equation}

Compatibility keeps \(V\) Brownian after adjoining \(\eta\).  The preceding bounds and moment assumptions give a pathwise unique continuous strong solution by successive approximation.

For \(\gamma=\mathcal L(X_0,V,\eta)\), freeze the coefficients on nested partitions with vanishing mesh.  The resulting Euler maps \(S_\phi^m:\mathcal E_{\rm in}\to\mathcal X\) are Borel and nonanticipative; BDG, Gronwall, continuity, and localization give, on every compatible realization of \(\gamma\),
\[
 \mathbb E\|S_\phi^m(X_0,V,\eta)-X^{\phi,\eta}\|_T^2\longrightarrow0.
\]
Choose an \(L^2(\gamma;\mathcal X)\)-summable subsequence.  Tonelli and extension off its full-measure convergence set yield a Borel nonanticipative limit \(S_{\phi,\gamma}\).  Pathwise uniqueness then gives
\[
 X^{\phi,\eta}=S_{\phi,\gamma}(X_0,V,\eta)\qquad\text{a.s.}
\]
Thus \(\mathcal L(X^{\phi,\eta},X_0,V,\eta) =(S_{\phi,\gamma},\operatorname{id})_\#\gamma\), proving uniqueness of the complete solution law.

For jointly compatible \(\eta,\eta'\) with the same \((X_0,V)\), write \(\Delta X=X^{\phi,\eta}-X^{\phi,\eta'}\).  Joint compatibility keeps \(V\) Brownian in the enlarged filtration; hence \eqref{eq:closed_loop_drift_lipschitz}, Assumption (H2), and the BDG inequality give
\[
 \mathbb E\|\Delta X\|_t^2
 \le C_1\int_0^t\mathbb E\|\Delta X\|_s^2ds
 +C_2\int_0^t\mathbb E\!\left[
 \sup_{r\le s}\mathcal W_2^2(\eta_r,\eta_r')\right]ds .
\]
Gronwall proves \eqref{eq:closed_loop_input_stability}, uniformly over \(\{\phi^P:P\in\mathfrak W\}\).

\end{proof}

\subsection{Proof of Lemma \ref{lem:compatible_law_closed}}
\label{app:proof_compatible_law_closed}

\begin{proof}
The marginal condition defining \(\mathfrak K\) is affine and closed.  For a bounded continuous cylinder function \(h\) of \((X_0,V)\), define
\[
 (K_th)(X_0,V_{\cdot\wedge t})
 :=\mathbb E^{\lambda_0\otimes\mathbb W_V}
 [h(X_0,V)\mid\mathcal F_t^{X_0,V}].
\]
The Wiener transition kernel makes \(K_th\) bounded and continuous. Compatibility is equivalent, for every bounded continuous \(\mathcal H_t\)-cylinder function \(G\), to
\begin{equation}
 \mathbb E^P\!\left[G\{h-K_th\}\right]=0.
 \label{eq:compatibility_cylinder_identity}
\end{equation}
Linearity proves convexity.  If \(P_n\Rightarrow P\), bounded continuity passes the identities to \(P\); a monotone-class argument recovers the defining conditional-expectation identity.  Thus \(\mathfrak K\) is closed.
\end{proof}

\subsection{Proof of Lemma \ref{lem:joint_response_wellposed}}
\label{app:proof_joint_response_wellposed}

\begin{proof}
Fix \(P\in\mathfrak W\).  Theorem \ref{thm:fixed_P_existence} and (H7)(i) give a deterministic \(\phi^P\in\Phi_{ad}\), so \(\phi^P(t,y_t)\) is \(\mathcal G_t^I\)-measurable.  On a compatible basis carrying \((X_0,V)\), impose conditional self-consistency recursively by
\[
 \eta_t^0:=\delta_0,\qquad X^n:=X^{P,\eta^n},\qquad
 \eta^{n+1}:=m^{P,\eta^n}.
\]
By (H7)(ii), these are nonanticipative functionals of \((X_0,V)\); compatibility is preserved after adjoining \((\eta^n,\eta^{n+1})\), so adjacent inputs are jointly compatible.

For the uniform moment bound, define
\[
 Z_n:=\mathbb E[\|X^n\|_T^{p_0}],\qquad
 S_n:=\mathbb E\!\left[\sup_{t\le T}M_{p_0}(\eta_t^n)\right].
\]
For any input \(\eta\), put \(N_t=\mathbb E^{\mathbb Q^{P,\eta}}[ \|X^{P,\eta}\|_T^{p_0}\mid\mathcal F_t^{y^{P,\eta}}]\).  Since \(M_{p_0}(m_t^{P,\eta})\le N_t\), conditional Jensen, Doob's inequality, H\"older's inequality, and \eqref{eq:H7_uniform_likelihood} give, for \(r>1\),
\begin{equation*}
 \mathbb E^{\mathbb Q^{P,\eta}}\!\left[\sup_{t\le T}M_{p_0}(m_t^{P,\eta})\right]
 \le \frac r{r-1}C_\Lambda^{1/(p_\Lambda r)}
 \left(\sup_{P,\eta}\mathbb E^{\mathbb P_0^{P,\eta}}
 \|X^{P,\eta}\|_T^{p_0rq_\Lambda}\right)^{1/(rq_\Lambda)}=:C_{\rm cond}.
\end{equation*}
The last term is uniformly finite by \eqref{eq:reference_exponential_moment}. Together with \eqref{eq:closed_loop_drift_lipschitz}, this yields
\[
 Z_n\le A_{p_0}^X(T)+c_{p_0}(T)S_n,\qquad
 S_{n+1}\le C_{\rm cond},\qquad
 \sup_n(Z_n+S_n)<\infty,
\]
where the last bound uses \(S_0=0\) and is uniform in \(P\).

For \(n\ge1\) and \(n\ge0\), respectively, set
\[
 D_n(t):=\mathbb E\|X^n-X^{n-1}\|_t^2,\qquad
 A_n(t):=\mathbb E\!\left[\sup_{s\le t}
 \mathcal W_2^2(\eta_s^{n+1},\eta_s^n)\right].
\]
Lemma \ref{lem:closed_loop_input_stability}, (H7)(ii), and repeated integration give, uniformly in \(P\),
\[
\begin{aligned}
 D_n(t)&\le C\int_0^tA_{n-1}(s)ds,\qquad
 A_n(t)\le L_{\rm fil}D_n(t),\\
 A_0(T)&\le C_{\rm cond}^{2/p_0},\qquad
 A_n(T)\le C_0\frac{(CT)^n}{n!}.
\end{aligned}
\]
Since \(\sum_nA_n(T)^{1/2}<\infty\), completeness gives limits \(\eta^n\to\mu^P\) and \(X^n\to X^P\) in the corresponding \(L^2\) path spaces.  An almost surely uniformly convergent subsequence, Fatou's lemma, and rational-time factorization preserve the moment bound and give adapted nonanticipative Borel versions.

Applying the state and filter maps once more to \(\mu^P\), stability gives
\[
 \mathbb E\|X^n-X^{P,\mu^P}\|_T^2
 +\mathbb E\!\left[\sup_{t\le T}
 \mathcal W_2^2(\eta_t^{n+1},m_t^{P,\mu^P})\right]\longrightarrow0.
\]
Hence \(X^P=X^{P,\mu^P}\), \(\mu^P=m^{P,\mu^P}\), and \(\mu_t^P=\mathcal L(x_t^P\mid\mathcal F_t^{y^P})\).  Set \(Q_n:=\mathcal L(X^n,V,\eta^{n+1})\) and write \(Q^P:=\mathcal L(X^P,V,\mu^P)\).  Each \(Q_n\) lies in \(\mathfrak K\), and joint \(L^2\)-convergence with \cref{lem:compatible_law_closed} gives \(Q^P\in\mathfrak K\).  The limiting equation and moment bound therefore make \(Q^P\) a compatible \(P\)-response.

For two compatible \(P\)-responses, the common-input randomization and joint-compatibility results of \cite{kurtz2014weak} put them on a jointly compatible coupling with the same \((X_0,V)\).  For \(D(t):=\mathbb E\|X^1-X^2\|_t^2\), state and filter stability give
\[
 D(t)\le C\int_0^t\!\left\{D(s)+
 \mathbb E\!\left[\sup_{r\le s}\mathcal W_2^2(\mu_r^1,\mu_r^2)\right]
 \right\}ds
 \le C(1+L_{\rm fil})\int_0^tD(s)ds.
\]
Gronwall gives \(X^1=X^2\) and \(\mu^1=\mu^2\).  Thus the strongly realized response law \(\mathcal R(P)=Q^P\) is well defined.
\end{proof}

\subsection{Proof of Lemma \ref{lem:W_compact}}
\label{app:proof_W_compact}

\begin{proof}
The law generated by \((X_0,V)\sim\lambda_0\otimes\mathbb W_V\), \(X_t=X_0\), and \(\mu_t=\delta_0\) belongs to \(\mathfrak W\): its moment is \(\mathbb E|X_0|^{p_0}\le K_1\) and both increments vanish.  Convexity follows from \cref{lem:compatible_law_closed} because both defining bounds are preserved under mixtures.

The \(V\)-marginal is fixed.  For the state coordinate and \(L>0\),
\[
 \sup_{P\in\mathfrak W}P(\|X\|_T>L)\le \frac{K_1}{L^{p_0}},\qquad
 \sup_{P\in\mathfrak W}\mathbb E^P|X_t-X_s|^{p_{\rm tm}}
 \le K_2|t-s|^{1+\alpha_{\rm tm}}.
\]
Thus the uniform Kolmogorov criterion gives tightness in \(\mathcal X\).

For \(\mathcal K_R:=\{\nu\in\mathcal P_2:M_{p_0}(\nu)\le R\}\), Markov's inequality and \(p_0>2\) give
\begin{gather*}
 \inf_{P\in\mathfrak W}P\bigl(\mu_\cdot\in C([0,T];\mathcal K_R)\bigr)
 \ge1-\frac{K_1}{R},\\
 \sup_{\nu\in\mathcal K_R}\nu(|\xi|>L)
 \le R L^{-p_0},\\
 \sup_{\nu\in\mathcal K_R}\int_{|\xi|>L}|\xi|^2\nu(d\xi)
 \le R L^{2-p_0}\xrightarrow[L\to\infty]{}0.
\end{gather*}
They give weak relative compactness and uniform integrability of second moments, while lower semicontinuity of \(M_{p_0}\) makes \(\mathcal K_R\) closed.  Hence \(\mathcal K_R\) is \(\mathcal W_2\)-compact.

Also \(R_\mu:=\sup_{r\le T}M_{p_0}(\mu_r)<\infty\) almost surely, and
\[
 \sup_n\int_{|\xi|>L}|\xi|^2\mu_{t_n}(d\xi)
 \le R_\mu L^{2-p_0}\longrightarrow0
 \quad(t_n\to t).
\]
Thus weak path continuity upgrades to \(\mathcal W_2\)-continuity.  Compact containment, the metric-space Kolmogorov criterion, and Arzel\`a--Ascoli give tightness of the measure coordinate.  Hence \(\mathfrak W\) is tight in \(\mathcal P(\Omega^{\mathrm c})\).

Let \(P_n\in\mathfrak W\) and \(P_n\Rightarrow P\).  Use the weakly lower semicontinuous cost
\[
 \mathsf C_2(\nu,\nu')
 :=\inf_{\pi\in\Pi(\nu,\nu')}
 \int_{\mathbb R^{d_x}\times\mathbb R^{d_x}}|\xi-\xi'|^2\pi(d\xi,d\xi')
 \in[0,\infty].
\]
Set the nonnegative lower semicontinuous functionals
\[
 G(X,\mu):=\|X\|_T^{p_0}+\sup_{r\le T}M_{p_0}(\mu_r),\qquad
 F_{s,t}(X,\mu):=|X_t-X_s|^{p_{\rm tm}}
 +\mathsf C_2(\mu_t,\mu_s)^{p_{\rm tm}/2}.
\]
Portmanteau gives
\[
 \mathcal M_{p_0}(P)=\mathbb E^P G
 \le\liminf_{n\to\infty}\mathbb E^{P_n}G
 \le K_1.
\]
Thus \(\mu_r\in\mathcal P_2\) almost surely for every \(r\), and \(\mathsf C_2(\mu_t,\mu_s)=\mathcal W_2^2(\mu_t,\mu_s)\).  A second application gives
\[
 \mathbb E^P\!\left[
 |X_t-X_s|^{p_{\rm tm}}
 +\mathcal W_2^{p_{\rm tm}}(\mu_t,\mu_s)\right]
 =\mathbb E^P F_{s,t}
 \le\liminf_{n\to\infty}\mathbb E^{P_n}F_{s,t}
 \le K_2|t-s|^{1+\alpha_{\rm tm}}.
\]
The fixed marginal passes to the limit, and \cref{lem:compatible_law_closed} gives \(P\in\mathfrak K\).  Hence \(P\in\mathfrak W\); the set is closed, and Prokhorov completes the proof.
\end{proof}

\subsection{Proof of Lemma \ref{lem:response_estimates}}
\label{app:proof_response_estimates}

\begin{proof}
Fix \(P\in\mathfrak W\) and set \(Q=\mathcal R(P)\). The closed-loop growth bound, BDG, H\"older, Gronwall, \eqref{eq:derived_conditional_moment_bound}, and (H7)(iii) give
\begin{gather*}
 \mathcal M_{p_0}(Q)
 \le A_{p_0}^X(T)+(1+c_{p_0}(T))C_{\rm cond}=:A_{p_0}(T),\\
 \mathbb E^Q|X_t-X_s|^{p_{\rm tm}}
 \le B_X(T)|t-s|^{p_{\rm tm}/2}
  =B_X(T)|t-s|^{1+\alpha_{\rm tm}},\\
 \mathbb E^Q\mathcal W_2^{p_{\rm tm}}(\mu_t,\mu_s)
 \le C_{\rm tm}|t-s|^{1+\alpha_{\rm tm}}.
\end{gather*}
These are the claimed bounds, with \(B_\mu(T)=C_{\rm tm}\).
\end{proof}

\subsection{Proof of Lemma \ref{lem:T_continuity}}
\label{app:proof_T_continuity}

\begin{proof}
Let \(\{P_n\}_{n\ge1}\subset\mathfrak W\) satisfy \(P_n\xrightarrow{w}P\). Skorokhod representation \cite[Theorem 6.7]{billingsley1999convergence}, the uniform \(p_0\)-moment bound, and \(p_0>2\) give
\begin{equation}
 \mathbf d_T(P_n,P)\longrightarrow0.
 \label{eq:joint_distance_from_weak}
\end{equation}

On one basis carrying \((X_0,V)\), set \(\eta^{P,0}=\delta_0\) and
\[
 X^{P,k}:=X^{P,\eta^{P,k}},\qquad
 \eta^{P,k+1}:=m^{P,\eta^{P,k}}.
\]
For fixed \(k\), write
\[
 D_k^n(t):=\mathbb E\|X^{P_n,k}-X^{P,k}\|_t^2,\qquad
 B_k^n(t):=\mathbb E\!\left[\sup_{s\le t}
 \mathcal W_2^2(\eta_s^{P_n,k},\eta_s^{P,k})\right].
\]
The selector, state, and filter estimates give
\[
 D_k^n(t)\le C\int_0^t\{D_k^n(s)+B_k^n(s)\}ds
 +C\mathbf d_T^2(P_n,P),\qquad
 B_{k+1}^n(t)\le L_{\rm fil}D_k^n(t).
\]
Since \(B_0^n=0\), induction and Gronwall give, for fixed \(k\),
\begin{equation}
 D_k^n(T)+B_{k+1}^n(T)\longrightarrow0.
 \label{eq:finite_stage_continuity}
\end{equation}

Let \(Q^{P,k}:=\mathcal L(X^{P,k},V,\eta^{P,k+1})\). For fixed \(k\), \eqref{eq:finite_stage_continuity} gives \(\mathbf d_T(Q^{P_n,k},Q^{P,k})\to0\), while the factorial estimate established in Appendix~\ref{app:proof_joint_response_wellposed} is uniform over \(P\in\mathfrak W\):
\begin{equation}
 \sup_{P\in\mathfrak W}
 \mathbf d_T(Q^{P,k},\mathcal R(P))\longrightarrow0
 \quad\text{as }k\to\infty.
 \label{eq:uniform_recursive_tail}
\end{equation}
The triangle inequality, first with \(n\to\infty\) and then \(k\to\infty\), gives \(\mathbf d_T(\mathcal R(P_n),\mathcal R(P))\to0\).
\end{proof}

\subsection{Proof of Lemma \ref{lem:fixed_point_optimal_response_identification}}
\label{app:proof_fixed_point_optimal_response_identification}

\begin{proof}
Let \((\widetilde X,\widetilde V,\widetilde\mu)\) construct \(\mathcal R(P)\). Since \(\mathcal R(P)=P\),
\[
 \mathcal L(\widetilde X_0,\widetilde V,\widetilde\mu)
 =P\circ(X_0,V,\mu)^{-1}=\Gamma_P .
\]
Condition \emph{(EP)} gives the same environment law under \(\mathbb P^{P,*}\):
\[
 \mathbb P^{P,*}\circ
 (X_0^P,V^{P,\phi^P},\eta)^{-1}=\Gamma_P .
\]
Uniqueness in law from \cref{lem:closed_loop_input_stability}, applied to this environment and \(\phi^P\), yields
\[
 \mathbb P^{P,*}\circ
 (X^P,V^{P,\phi^P},\eta)^{-1}
 =\mathcal L(\widetilde X,\widetilde V,\widetilde\mu)=P .
\]

For \(\phi\in\Phi_{ad}\), let \(\widetilde X^\phi\) be the corresponding strong solution on the fixed-point environment.  Condition \emph{(EP)} and the same uniqueness-in-law result give
\begin{equation}
 \mathbb P^{P,\phi}\circ(X^P,V^{P,\phi},\eta)^{-1}
 =\mathcal L(\widetilde X^\phi,\widetilde V,\widetilde\mu).
 \label{eq:deviation_law_transfer}
\end{equation}
Thus the deviation and equilibrium costs are \(J_P(\phi)\) and \(J_P(\phi^P)\).  Theorem \ref{thm:fixed_P_existence} yields
\[
 J_P(\phi^P)\le J_P(\phi),\qquad \phi\in\Phi_{ad},
\]
on the fixed-point realization and, by the same law argument, on every strong compatible realization with joint law \(P\).
\end{proof}

\subsection{Proof of Theorem \ref{thm:weak_equilibrium}}
\label{app:proof_weak_equilibrium}

\begin{proof}
By \cref{lem:W_compact,lem:T_continuity} and invariance, \(\mathcal R\) is a continuous self-map of the nonempty compact convex set \(\mathfrak W\) in the locally convex space of finite signed measures with the weak topology.  The Schauder--Tychonoff theorem \cite[Corollary 17.56]{aliprantis2006infinite} gives \(P^*\in\mathfrak W\) with \(\mathcal R(P^*)=P^*\).

The response equations make this fixed point a compatible self-consistent weak solution, and \cref{lem:fixed_point_optimal_response_identification} makes \(\phi^{P^*}\) optimal.  Hence \(P^*\) is a weak WPR-MFG equilibrium.
\end{proof}

\subsection{Proof of Theorem \ref{thm:strong_equilibrium}}
\label{app:proof_strong_equilibrium}

\begin{proof}
For the fixed point \(P^*\), \cref{lem:joint_response_wellposed} gives a strong compatible response driven by \((X_0,V)\), with law \(P^*\), and \cref{lem:fixed_point_optimal_response_identification} makes its control optimal.  For two jointly compatible responses driven by the same input and using \(\phi^{P^*}\), let \(D(t):=\mathbb E\|X^1-X^2\|_t^2\), the coefficient bounds give
\[
 D(t)\le C\int_0^t\left\{D(s)+
 \mathbb E\!\left[\sup_{r\le s}
 \mathcal W_2^2(\mu_r^1,\mu_r^2)\right]\right\}ds.
\]
Assumption (H7)(ii) bounds the second term by \(L_{\rm fil}D(s)\), so Gronwall gives \(X^1=X^2\), then \(\mu^1=\mu^2\).  On any compatible basis with the prescribed input law, the same strong realization follows from this pathwise uniqueness and the compatible Yamada--Watanabe theorem \cite[Theorem 1.5]{kurtz2014weak}.  Its hypotheses apply because the state equation, compatibility, and conditional-law identity have countable Borel formulations, while \cite[Lemmas 2.10--2.11]{kurtz2014weak} supplies the required coupling.
\end{proof}
\section{Proofs for the LQ Example}
\label{app:lq_new}

\subsection{Proof of Proposition \ref{prop:lq_frozen_response}}
\label{app:proof_lq_frozen_response}

For either LQ control space, write \(\chi_t^u=\mathbb E[x_t^u\mid\mathcal F_t^Y]\) and \(\zeta_t^u=x_t^u-\chi_t^u\).  Since \(\zeta^u\) solves the second equation in \eqref{eq:lq_projection}, its cost contribution is independent of \(u\). After conditioning, \(u\mapsto\chi^u\) is affine and continuous, while the quadratic part has Hessian
\(\left(\begin{smallmatrix}\mathsf R_u&-\mathsf q_{ux}\\
-\mathsf q_{ux}&\mathsf Q_x\end{smallmatrix}\right)\), positive definite by \eqref{eq:lq_strict_convexity}.  Young's inequality and the direct method give a unique minimizer on either closed control subspace.  The projected first variation yields the adjoint and conditional stationarity relations; imposing \(\chi^u=m\) and \(\overline u=u\) reduces them to \eqref{eq:lq_common_optimality}.  Strict convexity makes this condition sufficient.

\subsection{Proof of Theorem \ref{thm:lq_pr_wpr_new}}
\label{app:proof_lq_pr_wpr_new}

\paragraph{PR system}
Under PR, \eqref{eq:lq_common_optimality} gives \(\mathsf R_{\rm eff}u_t=\widehat\Theta_t-\varpi_t\).  Substituting
\[
 \varpi_t=\mathfrak r_t(m_t-c)+\mathfrak s_t\widehat\Theta_t
\]
and matching the coefficients of \(m_t-c\) and \(\widehat\Theta_t\) gives \eqref{eq:lq_pr_riccati_new}, with martingale coefficient \(\mathfrak s_tK_t^{\rm f}\sigma_y\).  The scalar Riccati equation is globally solvable backward from \(\mathsf Q_T\), and the second equation is then linear. Together with the well-posed Kalman--Bucy filter, this gives \eqref{eq:lq_pr_control_new} and a linear closed-loop system.  To show that the ansatz covers every PR solution, define
\[
 \varepsilon_t:=\varpi_t-\mathfrak r_t(m_t-c)
                 -\mathfrak s_t\widehat\Theta_t.
\]
The two equations in \eqref{eq:lq_pr_riccati_new} give
\[
 d\varepsilon_t
 =\left(a+\frac{\mathsf q_{ux}+\mathfrak r_t}
                   {\mathsf R_{\rm eff}}\right)\varepsilon_tdt
   +d\widetilde{\mathcal N}_t,
 \qquad \varepsilon_T=0,
\]
where \(d\widetilde{\mathcal N}_t =d\mathcal N_t-\mathfrak s_tK_t^{\rm f}\sigma_y d\widehat B_t^Y\). The deterministic integrating factor makes \(\varepsilon\) a square-integrable martingale with zero terminal value.  Thus \(\varepsilon=0\), and the PR system has a unique strong solution.

\paragraph{WPR equation}
Solving the adjoint equation backward and conditioning on \(\sigma(y_t)\) gives \eqref{eq:lq_wpr_exact_new}.  First work on the full closed space \(\mathbb H_{\rm W}:=\mathcal U_{\rm WPR}^{\rm LQ}\), before using Gaussianity. Its orthogonal projection has the disintegrated version
\[
 (\mathsf P_{\rm W}z)_t=\mathbb E[z_t\mid y_t]
 \quad\text{for }dt\text{-a.e. }t,
\]
and is an \(L^2\)-contraction.  For \(v\in\mathbb H_{\rm W}\), set \(\mathfrak a_t(v)=\int_0^t v_sds\), let \(\varpi^v\) solve the linear adjoint with terminal value \(\mathsf Q_T\mathfrak a_T(v)\), and define \((\mathfrak Bv)_t:=\mathsf R_{\rm eff}v_t+ (\mathsf P_{\rm W}\varpi^v)_t\).  Conditional Jensen shows that \(\mathfrak B\) is bounded in the norm \(\|v\|_a^2=\mathbb E\int_0^T e^{-at}|v_t|^2dt\).  With \(\mathfrak b(v,w):=\langle\mathfrak Bv,w\rangle_a\), the product rule for \(e^{-at}\mathfrak a_t(v)\varpi_t^v\) gives
\begin{equation}
\begin{aligned}
 \mathfrak b(v,v)
 ={}&\mathsf R_{\rm eff}\|v\|_a^2
 +e^{-aT}\left(\mathsf Q_T+\frac{\mathsf q_{ux}}2\right)
   \mathbb E[\mathfrak a_T(v)^2]\\
 &+\frac{a\mathsf q_{ux}}2
   \mathbb E\int_0^T e^{-at}\mathfrak a_t(v)^2dt
 \ge \mathsf R_{\rm eff}\|v\|_a^2.
\end{aligned}
\label{eq:lq_wpr_coercivity_new}
\end{equation}

Let \(\mathfrak l\) be the weighted linear functional from the right-hand side of \eqref{eq:lq_wpr_exact_new}.  Galerkin solutions on nested dense finite-dimensional spaces exist and are uniformly bounded by \eqref{eq:lq_wpr_coercivity_new}.  Weak compactness and continuity give
\[
 \mathfrak b(v,w)=\mathfrak l(w),\qquad w\in\mathbb H_{\rm W}.
\]
The same estimate gives uniqueness.  Testing the residual in \eqref{eq:lq_wpr_exact_new} against itself makes it vanish, so the equation has a unique solution in the full WPR class.

\paragraph{Gaussian reduction and closure}
For \(t>0\), \(\mathsf V_t^y>0\), so \(\widetilde y_t=(y_t-\overline y_t)/\sqrt{\mathsf V_t^y}\) is standard Gaussian.  The normalized Hermite basis yields \(\mathbb H_{\rm W}=\bigoplus_{j\ge0}\mathbb H_j\), where \(\mathbb H_j\) is generated by \(\zeta_j(t)e_j(\widetilde y_t)\).  Joint Gaussianity gives
\[
 \mathbb E[e_j(\widetilde y_s)\mid y_t]
 =\rho_{st}^{\,j}e_j(\widetilde y_t),
 \qquad \rho_{st}:=\operatorname{Corr}(y_s,y_t).
\]
For \(v_t=\zeta_j(t)e_j(\widetilde y_t)\), conditional Fubini gives
\[
\begin{aligned}
 (\mathfrak Bv)_t=\Bigg[&\mathsf R_{\rm eff}\zeta_j(t)
 +e^{-a(T-t)}\mathsf Q_T\int_0^T\zeta_j(s)\rho_{st}^{\,j}ds\\
 &+\mathsf q_{ux}\int_t^T e^{-a(s-t)}
       \zeta_j(s)\rho_{st}^{\,j}ds\Bigg]e_j(\widetilde y_t).
\end{aligned}
\]
Hence \(\mathfrak B\) preserves every \(\mathbb H_j\).  Its right-hand side has only orders zero and one, so uniqueness eliminates the higher modes.  This gives \eqref{eq:lq_wpr_affine_new} and \eqref{eq:lq_wpr_mean_new}--\eqref{eq:lq_wpr_gain_new}; conversely, those equations reconstruct \eqref{eq:lq_wpr_exact_new} and uniquely determine both coefficients.

The coefficient space makes the WPR control square-integrable, so \eqref{eq:lq_wpr_closed_new} has a unique strong solution.  For \(I\in\{\mathrm{PR},\mathrm{WPR}\}\), conditional expectation in the state equation gives
\[
 d\bigl(\mathbb E[x_t^{I}\mid\mathcal F_t^Y]-m_t^{I}\bigr)
 =-a\bigl(\mathbb E[x_t^{I}\mid\mathcal F_t^Y]-m_t^{I}\bigr)dt
\]
with zero initial value.  Thus both consistency conditions hold, and \cref{prop:lq_frozen_response} identifies the unique best responses.  Uniqueness of the two optimality systems proves the claimed equilibrium uniqueness.

\bibliographystyle{siamplain}
\bibliography{references}
\end{document}